\documentclass[11pt]{amsart}
\usepackage{amsmath,amscd,amssymb}
\usepackage[mathscr]{eucal}
\usepackage{amsmath}
\usepackage{enumerate}
\usepackage{color}

 \newtheorem{thm}{Theorem}[section]
 \newtheorem{cor}[thm]{Corollary}
 \newtheorem{lemma}[thm]{Lemma}
 \newtheorem{prop}[thm]{Proposition}

\theoremstyle{definition}

 \theoremstyle{remark}

\newtheorem{exa}[thm]{Example}
\newtheorem{remark}[thm]{Remark}

\numberwithin{equation}{section}

\def\R{\mathbb{R}}
\def\C{\mathbb{C}}
\def\N{\mathbb{N}}

\title[Rational approximation of semigroups]{Rational approximation of operator semigroups via the $\mathcal B$-calculus}

\author{Alexander Gomilko}
\address{Faculty of Mathematics and Computer Science\\
Nicolas Copernicus University\\
Chopin Str. 12/18\\
87-100 Toru\'n, Poland
}
\email{alex@gomilko.com}
\thanks{}

\author{Yuri Tomilov}
\address{Institute of Mathematics\\
Polish Academy of Sciences\\
\'Sniadeckich 8\\
00-656 Warsaw, Poland
}
\address{Faculty of Mathematics and Computer Science\\
Nicolas Copernicus University\\
Chopin Str. 12/18\\
87-100 Toru\'n, Poland
}

\email{ytomilov@impan.pl}
\thanks{}

\subjclass[2010]{Primary:  47A60, 41A20, 41A25; Secondary: 47D03, 30E10, 41A21, 65J08}

\date{\today}

\keywords{rational approximation, operator semigroup, functional calculus, Pad\'{e} approximation, Hilbert space}

\begin{document}
\begin{abstract}
We improve the classical results by Brenner and Thom\'ee on rational approximations of operator 
semigroups.
In the setting of Hilbert spaces, we introduce a finer regularity scale for initial data, 
provide sharper stability estimates, and obtain
optimal approximation rates. Moreover, we strengthen a result due to Egert-Rozendaal
on subdiagonal Pad\'e approximations of operator semigroups. 
Our approach is direct and based on the theory of the 
$\mathcal B$- functional calculus developed recently.
On the way, we elaborate a new and simple approach to construction of the $\mathcal B$-calculus
thus making the paper essentially self-contained.
\end{abstract}

\maketitle

\section{Introduction}

The theory of rational approximation of $C_0$-semigroups on Banach spaces is a classical chapter of semigroup
theory with a multitude of important applications, e.g. in numerical methods for the study of PDE. 
However, a number of well-known results of the theory were obtained solely in the framework of abstract Banach spaces,
and the influence of geometrical properties of the spaces was not exploited enough.
One of the reasons was apparently a lack of an appropriate functional calculus for semigroup generators taking into account
fine properties of the underlying spaces.
In this paper, using the theory of $\mathcal B$-calculus developed recently in \cite{BaGoTo} and \cite{BaGoTo1}, we intend to fill this gap and to obtain Hilbert space versions of several crucial results
on rational approximation of semigroups.
 
Starting a very brief introduction to the subject, observe that if $-A$ is the generator of a $C_0$-semigroup $(e^{-tA})_{t \ge 0}$ on a Banach space $X,$
then the Cauchy problem 
\begin{equation}\label{cauchy}
x'(t)=-A x(t), \qquad x(0)=x \in {\rm dom}(A),
\end{equation}
where ${\rm dom}(A)$ stands for the domain of $A$, is well-posed.  Many time-discretisation methods of \eqref{cauchy}
lead to the problem of high accuracy approximation of the propagator $(e^{-tA})_{t \ge 0}$ to \eqref{cauchy} by rational functions of $A,$
not relying on the specific structure of $A.$ Without loss of generality, one may assume
that   $(e^{-tA})_{t \ge 0}$ is bounded, so in the sequel we will deal only with bounded semigroups.
Given a sequence of rational functions $(r_n)_{n \ge 1}$ holomorphic in the open right half-plane $\C_+,$
it is thus of substantial interest to characterise the convergence $r_n(tA)x \to e^{-tA}x$ as $ n \to \infty,$ $x \in X,$ in a priori terms,
and to equip this convergence with optimal approximation rates depending
on regularity of the initial data $x.$ 
A simple but a very natural choice is provided by 
\[
r_n(z)= r^n(z/n), \qquad n \ge 1,
\]
for a  rational function $r,$ 
and it will be the main focus
in this paper.

The  study of such approximation problems even in the context  of finite-dimensional $X$
(see e.g. \cite{Hairer}) reveals a necessity to deal with
 $\mathcal A$-stable rational functions and to quantify their approximation properties.
 Recall
that
a rational function $r$ is said to be \emph{$\mathcal A$-stable} if $r$ is holomorphic 
in $\C_+$
 and 
$\sup_{z \in \C_+} |r(z)|\le 1,$
and 
 $r$ is said to be
an approximation of order $q\in \N$ to the exponential function $e^{-z}$  if 
\begin{equation}\label{approxim}
e^{-z}-r(z)=\mbox{O}(|z|^{q+1}),\qquad z\to 0,
\end{equation}
or, in other words, $r^{(k)}(0)=(-1)^k$ for all $0 \le k \le q.$
In the following, we will often refer to $e^{-z}$ as \emph{the exponential}. 

If $r$ is an $\mathcal A$-stable rational approximation to the exponential of certain order, then
by the well-known Lax-Richtmyer-Chernoff theorem
the approximation property
 $\lim_{n \to \infty}r(tA/n)^n x=e^{-tA}x$
for all $x \in X$ and $t \ge 0$ is equivalent to the stability condition $\sup_{n \in\N, t \ge 0}\|r(tA/n)^n\|<\infty,$
see e.g. \cite[p. 437]{Jara1}, and also \cite[Theorem 2.1]{Flory1}.
(This also follows from Theorem \ref{brento} below.)
Moreover, $\mathcal A$-stable approximations $r$ of order $q$  allow one to obtain approximation rates 
for $e^{-tA}x-r^n(tA/n)x$ as $n \to \infty$ depending on $q$, often at the price of considering the initial data $x$ with additional regularity, in particular from  ${\rm dom}(A^s)$ with suitable $s>0,$
and the stability condition plays a role here. Even when the condition does not hold, 
it is often useful to quantify the growth of $\|r(tA/n)^n\|$ for fixed $t > 0.$
A thorough discussion of these and related issues with a number of pertinent references can be found in \cite[Section 2]{Jara1}, see also \cite[Section 1]{Flory} and \cite{Flory1}.

\subsection{Rational approximation of semigroups: a glimpse at the state of the art}

Let ${\rm M}(\R_+)$ be the Banach algebra of bounded Borel measures on $\mathbb R_+=[0,\infty,)$ and let
$\mathcal{LM}(\C_+)$ be the Banach algebra of Laplace transforms $\widehat \nu$ of $\nu \in {\rm M}(\R_+)$
with the norm  $\|\widehat \nu\|_{\mathcal{LM}(\C_+)}:=\|\nu\|_{{\rm M}(\R_+)}.$
Note that if $r$ is an $\mathcal A$-stable rational function,   then $r_n \in \mathcal{LM}(\C_+)$ for every $n \in \mathbb N.$ If, moreover, $r$ is an $\mathcal A$-stable rational approximation to $e^{-z}$
of order $q,$ and 
$\Delta_{n, r, s}(z):=  (e^{-z}-r_n(z))z^{-s},$ then  $\Delta_{n, r, s} \in \mathcal {LM}(\C_+)$ for 
all $n \in \N$ and integers $s$ such that $0 \le s \le q+1.$ 
These observations (see \cite[p. 685]{BT}) put rational approximations $(r_n(A))_{n \ge 1}$ of 
$(e^{-tA})_{t \ge 0}$
into the framework of  the Hille-Phillips (HP-) functional calculus.
Recall that if $-A$ generates a bounded $C_0$-semigroup $(e^{-tA})_{t \ge 0}$ on a Banach space $X,$
and $M:=\sup_{t \ge 0}\|e^{-tA}\|,$
then the Hille-Phillips (HP-) calculus for $A$ is defined as a bounded homomorphism $\Psi_A: \mathcal{LM}(\C_+)\to
L(X)$
given by the following operator version of Laplace transform:
$$\Psi_A(\widehat \nu)x=\widehat \nu (A)x=\int_{0}^\infty e^{-tA}x\, d\nu(t), \qquad  x \in X, \quad \widehat \nu \in \mathcal{LM}(\C_+),$$
so that $\|\hat\nu(A)\|\le M \|\nu\|_{{\rm M}(\R_+)}.$
More details on the HP-calculus can be found e.g. in \cite[Chapter 3.3]{Haase}.
The approach to the study of rational approximations 
by means of the HP-calculus 
was pioneered by Hersch and Kato in \cite{HK},
and elaborated by Brenner and Thomee in \cite{BT}.
In particular, the next basic result in the theory of rational
approximation of semigroups was obtained in 
in \cite[Theorem 1, Theorem 4 and Remarks]{BT}. 
(See also \cite[Sections 2,3]{Bakaev} and \cite[Section 5]{Emamirad}
for generalisations of this result.)
\begin{thm}\label{brento}
Let  $-A$  be the generator of a $C_0$-semigroup $(e^{-tA})_{t\ge 0}$ on Banach space $X$, 
and $M:=\sup_{t \ge 0} \|e^{-tA}\|<\infty.$ 
\begin{itemize}
\item [(i)] If $r$ is an $\mathcal A$-stable rational function, then 
there exists $C=C(r)$ such that for all $n \in \mathbb N$ and $t \ge 0,$
\begin{equation}\label{calley}
\|r^n(tA/n)\|\le CM n^{1/2}.
\end{equation}
\item [(ii)] If  $r$ is an $\mathcal A$-stable rational approximation of order $q \in \N$ to
 the exponential,  and  $s\in \N$ satisfies $s \in (0,q+1], s \neq  (q+1)/2,$ 
then there exists $C=C(s, r)>0$ such that 
\begin{align}\label{bt1}
\|e^{-tA}x-r^n(tA/n)x\|\le& CM \frac{t^s}{n^{s-1/2}} \|A^{s}x\|,\quad s\in (0,(q+1)/2),\\
\|e^{-tA}x-r^n(tA/n)x\|\le& CM \frac{t^s}{n^{sq/(q+1)}}\|A^{s}x\|,\quad s\in ((q+1)/2, q+1],\label{bt2}
\end{align}
for all $x \in {\rm dom}\, (A^s), t \ge 0,$ and $n \in \mathbb N.$
If, under the assumptions above, $s=(q+1)/2,$ then there is  $C=C(s, r)>0$ such that
 \begin{equation}\label{bt3}
\|e^{-tA}x-r^n(tA/n)x\|\le  CM \frac{t^s \log (n+1)}{n^{q/2}}\|A^{s}x\|,
\end{equation}
for all $x \in {\rm dom}\, (A^s), t \ge 0,$ and $n \in \mathbb N.$
\end{itemize}
\end{thm}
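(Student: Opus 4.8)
The plan is to route everything through the Hille--Phillips calculus and reduce the two operator statements to purely scalar estimates of $\mathcal{LM}(\C_+)$-norms. The two facts I rely on are the calculus bound $\|\widehat\nu(A)\|\le M\|\nu\|_{{\rm M}(\R_+)}=M\|\widehat\nu\|_{\mathcal{LM}(\C_+)}$ and the invariance of the $\mathcal{LM}(\C_+)$-norm under the dilation $z\mapsto tz$: this dilation pushes the representing measure forward by $u\mapsto tu$ and hence preserves its total variation, so $\|f(tA)\|\le M\|f\|_{\mathcal{LM}(\C_+)}$ uniformly in $t>0$. Writing $r^n(tA/n)=r_n(tA)$ and, for part (ii), factoring $e^{-z}-r_n(z)=z^s\Delta_{n,r,s}(z)$ so that $e^{-tA}x-r^n(tA/n)x=t^s\,\Delta_{n,r,s}(tA)\,A^sx$, the dilation invariance shows that the only $t$-dependence is the explicit factor $t^s$. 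Thus (i) reduces to $\|r_n\|_{\mathcal{LM}(\C_+)}\le Cn^{1/2}$ and (ii) to the three claimed bounds for $\|\Delta_{n,r,s}\|_{\mathcal{LM}(\C_+)}$.

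To estimate a scalar norm $\|f\|_{\mathcal{LM}(\C_+)}$ I would pass to boundary values and bound the $L^1(\R_+)$-norm of the inverse Laplace density $g(t)=\frac1{2\pi}\int_{\R}f(iy)e^{iyt}\,dy$, treating any nonzero limit of $f$ at $\infty$ (here $r(\infty)^n$, of modulus $\le 1$) as a separate point mass. The workhorse is the representation $r_n(iy)=\exp\!\big(n\log r(iy/n)\big)$: its real part $n\log|r(iy/n)|\le0$ furnishes damping away from the contact set $\{u:|r(iu)|=1\}$, while the imaginary part is a phase of size $\sim n$, so the densities are genuinely oscillatory integrals to be analysed by the method of stationary phase (equivalently, steepest descent after deforming the Bromwich contour). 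The crucial structural inputs are, first, the order-$q$ condition, which near the origin gives $e^{-iy}-r(iy/n)^n=\mathrm O(|y|^{q+1}/n^q)$ and tames the singularity of $(iy)^{-s}$; and second, $\mathcal A$-stability, which localises $|r(iy/n)|^n$ to neighbourhoods of the contact set.

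For part (i) the extremal behaviour occurs exactly when $r$ is inner, i.e. $|r(iu)|\equiv1$ (as for the Cayley/Crank--Nicolson function), so that the phase $u\mapsto\arg r(iu)+ut$ is stationary along a whole interval of $u$; stationary phase then yields a density of size $\sim n^{1/2}$ on a bounded $t$-interval and hence $\|r_n\|_{\mathcal{LM}(\C_+)}\asymp n^{1/2}$, whereas a function that is strictly contractive off a finite subset of $i\R$ (e.g. backward Euler) gives only $\mathrm O(1)$. For part (ii) the same stationary-phase contribution at frequencies $y\sim n$, now weighted by $(iy)^{-s}\sim(nu)^{-s}$, contributes $\mathrm O(n^{1/2-s})$, while the near-origin contribution, controlled by the $q$-th order tangency and concentrated on $|y|\lesssim n^{q/(q+1)}$, contributes $\mathrm O(n^{-sq/(q+1)})$. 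Comparing the exponents, $s-\tfrac12=sq/(q+1)$ precisely at $s=(q+1)/2$: the first term dominates for $s<(q+1)/2$ and the second for $s>(q+1)/2$, which gives \eqref{bt1}--\eqref{bt2}, and at the balance point the two contributions are comparable and a borderline logarithmic integral over the intermediate range $1\lesssim|y|\lesssim n$ emerges, accounting for the factor $\log(n+1)$ in \eqref{bt3}.

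The reduction in the first paragraph is routine; the entire difficulty sits in the scalar asymptotics. The main obstacle is to make the stationary-phase analysis rigorous and uniform in $n$: one must control the oscillatory integrals through the (possibly degenerate) stationary points of $u\mapsto\arg r(iu)+ut$ simultaneously with the Gaussian damping coming from $n\log|r(iu)|$, carefully track the competition between the origin and the medium-frequency contributions as $s$ crosses $(q+1)/2$, and extract the logarithm cleanly at the critical exponent. Once these sharp one-dimensional estimates on $\|r_n\|_{\mathcal{LM}(\C_+)}$ and $\|\Delta_{n,r,s}\|_{\mathcal{LM}(\C_+)}$ are in hand, the operator inequalities \eqref{calley}--\eqref{bt3} follow at once from $\|\widehat\nu(A)\|\le M\|\widehat\nu\|_{\mathcal{LM}(\C_+)}$.
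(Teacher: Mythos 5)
Your first paragraph is sound, and it is exactly the classical reduction: note that the paper does not prove Theorem \ref{brento} at all, but quotes it from Brenner--Thom\'ee \cite{BT}, and the proof there (going back to Hersch--Kato \cite{HK}) begins precisely as you do, with the HP-calculus bound $\|\widehat\nu(A)\|\le M\|\widehat\nu\|_{\mathcal{LM}(\C_+)}$, the dilation invariance of the $\mathcal{LM}$-norm, and the factorization $e^{-tA}x-r^n(tA/n)x=t^s\Delta_{n,r,s}(tA)A^sx$ via the product rule, the memberships $r_n\in\mathcal{LM}(\C_+)$ and $\Delta_{n,r,s}\in\mathcal{LM}(\C_+)$ for integer $0\le s\le q+1$ coming from partial fractions (see \cite[p.~685]{BT}). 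So the operator-theoretic part of your proposal is correct, and the theorem is indeed equivalent to the scalar bounds $\|r_n\|_{\mathcal{LM}(\C_+)}\le Cn^{1/2}$ and the three stated bounds on $\|\Delta_{n,r,s}\|_{\mathcal{LM}(\C_+)}$.

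The genuine gap is that everything after this reduction is a program, not a proof: those scalar bounds carry the entire content of the theorem, and you only describe how a stationary-phase analysis ought to produce them, yourself conceding that making it rigorous and uniform in $n$ is ``the main obstacle.'' The obstacles are not cosmetic. First, in the inner case ($r$ a finite Blaschke product, which by the dichotomy recalled in Section 2 of the paper is the only alternative to a finite contact set), your mechanism is misstated: for fixed $t$ the stationary points of $u\mapsto\arg r(iu)+ut$ are isolated roots of $\frac{d}{du}\arg r(iu)=-t$, not ``a whole interval'' of $u$ (an interval of stationary points would force $r(iu)=ce^{-iut}$, impossible for a rational function, and would produce a density of size $n$, not $n^{1/2}$). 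Second, these roots degenerate at exceptional values of $t$ -- already for the Cayley transform the phase has a degenerate stationary point at the caustic $t=2$, and for general Blaschke products at every critical value of $-\frac{d}{du}\arg r(iu)$ -- where the local contribution is of order $n^{2/3}$, so the claimed pointwise picture ``density $\sim n^{1/2}$ on a bounded $t$-interval'' is false in general; the $n^{1/2}$ bound survives only after a uniform (Airy/van der Corput type) analysis integrated in $t$. Third, a global Carlson/Plancherel argument is unavailable precisely because $|r^n|\equiv1$ on $i\R$ makes $r_n\notin L^2(i\R)$; this is why Brenner and Thom\'ee introduce a partition of unity and apply Carlson's inequality $\|\widehat g\|_{L^1}\lesssim\|g\|_{L^2}^{1/2}\|g'\|_{L^2}^{1/2}$ \emph{locally}, trading pointwise oscillatory asymptotics for $L^2$ computations -- the step your outline replaces with an unexecuted steepest-descent analysis. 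Finally, the crossover between the exponents $s-1/2$ and $sq/(q+1)$ and the logarithm at $s=(q+1)/2$ are read off from exponent arithmetic rather than derived. Until the scalar estimates are actually established, the proposal does not prove \eqref{calley}, \eqref{bt1}, \eqref{bt2}, or \eqref{bt3}.
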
 
The estimates in Theorem \ref{brento}, are, in general, optimal,  as the examples
of shift semigroups  on $L^1(\mathbb R)$ (or on $C_0(\mathbb R)$) show.
 The sharpness of stability estimate 
\eqref{calley} 
is discussed already in \cite[p. 687]{BT}, see also \cite[Section 2]{Crou}, \cite[Theorem 2.2]{Kovacs} and \cite[Theorem 3.3]{Jara} for more explicit arguments. 
The optimality of \eqref{bt1}, \eqref{bt2} and \eqref{bt3}
is elaborated in \cite[Section 5]{Kovacs}, see also \cite[Remarks, p. 691]{BT}.

It is natural to try to use interpolation and to extend the estimates in
Theorem~\ref{brento} to the whole range of $s \in (0,q+1]$. This idea was
realised in \cite{Kovacs}, where for $s >0$ the Favard interpolation spaces
$\mathcal F_{s}$ containing ${\mathrm{dom}}\, (A^{s})$ were introduced, and
other, related interpolation spaces were studied. However, the estimates
obtained in \cite{Kovacs} do not seem to yield Theorem~\ref{brento} for all
$s \in (0,q+1]$. It looks plausible that one may just repeat the arguments in
\cite[Theorems 1 and 4]{BT} for any fixed $s \in (0, q+1]$, and, using
the product rule for the HP-calculus (as in the proof of Theorem~\ref{mainBT}, (ii) below), obtain Theorem~\ref{brento} as formulated above
with $s$ from $(0,q+1]\cap \mathbb N$ replaced by $s \in (0, q+1]$. A justification of this
claim goes beyond the scope of the present paper, and thus is omitted.

Apart from fundamental use of the HP-calculus, the arguments in \cite{HK}  relied on 
Fourier multiplier estimates for $L^1(\mathbb R_+)$ provided by Carlson's inequality.
A refinement of this technique in \cite{BT} based on  appropriate partitions of unity
and local application of Carlson's inequality led to Theorem \ref{brento}.
These techniques with variations and adjustments
were used in virtually all subsequent papers dealing with rational approximations of 
(in general, non-holomorphic) $C_0$-semigroups.

Comparatively recently, there appeared other approaches to semigroup rational approximation 
relying on Pad\'e approximations to the exponential.
 As emphasized already in \cite{HK} the examples
of rational functions satisfying the assumptions of Theorem \ref{brento} can be constructed
using Pad\'e approximations. Toy examples are provided
by, for instance, Euler's and Crank-Nicolson's schemes corresponding to $r(z)=(1+z)^{-1}$ and $r(z)=(1-z/2)(1+z/2)^{-1}.$
Because of their strong approximation properties and explicit form, Pad\'e approximations play a distinguished role in the theory of rational approximation.
Recall that if $r=P/Q,$ where $P$ and $Q$ are polynomials with  ${\rm deg}\, P=m$ and ${\rm deg}\, Q=n,$ then $r$ 
satisfies \eqref{approxim}
with order $q$ not exceeding $m+n,$
and Pad\'e approximations satisfy \eqref{approxim}
with the maximal order $m+n.$ 
 Setting $Q(0)=1,$
 the Pad\'e approximations $r_{[m,n]}=P_m/Q_n$ of order $m+n$ to the exponential
are unique and can be written explicitly.
So they can be thought of as an infinite $(m,n)$-table.
By a famous result proved in \cite{Ehle} and \cite{Wanner}, see e.g. 
\cite[Theorem 4.12]{Hairer}, the approximations $r_{[m,n]}$ are $\mathcal A$-stable if and only if $m \le n \le m+2.$ 
Thus, the diagonal,  the first subdiagonal, and the second subdiagonal 
Pad\'e approximations $r_{[n,n]}, r_{[n, n+1]}$ and $r_{[n, n+2]},$ $n \in \N,$ 
are of primary interest from the point of view of numerical methods.

In the context of Theorem \ref{brento}, it was observed in \cite{Neu} that when approximating the exponential it is computationally advantageous 
to replace  the sequence $(r(\cdot/n)^n)_{n \ge 1}$ by the sequence  $(r_{[n, n+1]})_{n \ge 1}$ of subdiagonal Pad\'e approximations, so that each $r_{[n,n+1]}$ is a linear combination of simple fractions, and thus $r_{[n,n+1]}(A)$ is a 
linear combination of resolvents.
However, the convergence of  $r_{[n,n+1]}(tA)x$ to $e^{-tA}x$ as $n \to \infty$ for, at least, regular enough data $x$ and all $t \ge 0$
was left  as an open problem in \cite{Neu}, and it was explored very thoroughly in \cite{EgertR} in the spirit of considerations of \cite{BT}.
The key results from \cite{EgertR} can be summarized as follows.
\begin{thm}\label{Egert}
Let $-A$ generate a $C_{0}$-semigroup $(e^{-tA})_{t \geq 0}$ on a Banach space $X,$
and $M:=\sup_{t \ge 0}\|e^{-tA}\|<\infty.$
Let  $(r_{[n, n+1]})_{n \in \N}$ be the first subdiagonal Pad\'e approximations to the exponential,
and let  $s>0$ and $x\in {\rm dom}\, (A^{s})$ be given. 
\begin{itemize}
\item [(i)] If $s >1/2,$ then there exists $C=C(s) >0$ such that 
\begin{equation*}
\|e^{-tA}x - r_{[n,n+1]}(tA)x \|\le C M  \frac{t^{s}}{n^{s-1/2}}  \|A^s x\|
\end{equation*}
for all $t\ge 0,$ and $n\in\mathbb N$ with $n\ge s-\frac{1}{2}$.
\item [(ii)] If $X$ is a Hilbert space, $ a \in (0,s),$ and $(e^{-tA})_{t \geq 0}$ satisfies  
$\|e^{-tA}\|\le Me^{-\omega t}$ for some $M\geq 1$ and $\omega>0,$   then
 there is  $C = C(M, \omega, s-a)$ such that
\begin{align*}
 \left \|e^{-tA}x - r_{[n,n+1]}(tA)x \right \| \leq C \left(\frac{t}{n}\right)^{a} \|A^{s} x\|
\end{align*}
for all $t \geq 0$ and  $n \in \mathbb N$ such that $n \ge \frac{a}{2} - 1$.
\item [(iii)] If $A$ admits a bounded ${\rm H}^{\infty}$-calculus on $\mathbb C_+$ with ${\rm H}^{\infty}$-bound $C$,
then 
\begin{align*}
\left \|e^{-tA}x - r_{[n,n+1]}(tA)x \right \| \leq  2 C \left(\frac {t}{n}\right)^{s} \|A^s x\|
\end{align*}
for all $t \geq 0$ and $n \in \mathbb N$ such that $n > \frac{s}{2} - 1.$
\end{itemize}
\end{thm}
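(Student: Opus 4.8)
The plan is to route all three parts through one functional-calculus reduction and then to isolate the scalar estimates on the Pad\'e remainder that carry the real content. Fix $s>0$ and $x\in{\rm dom}(A^s)$, and for a parameter $b\in(0,s]$ put
\[
\psi_{n,b}(w):=\bigl(e^{-w}-r_{[n,n+1]}(w)\bigr)\,w^{-b},\qquad w\in\C_+ .
\]
Because $r_{[n,n+1]}$ approximates the exponential to order $2n+1$, the remainder satisfies $e^{-w}-r_{[n,n+1]}(w)=\mathrm O(w^{2n+2})$ as $w\to0$, so $\psi_{n,b}$ is holomorphic on $\C_+$ and vanishes at $0$ whenever $b<2n+2$; moreover $r_{[n,n+1]}$ is strictly proper, so $\psi_{n,b}$ also decays at infinity. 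Writing $w=tz$ gives $\bigl(e^{-tz}-r_{[n,n+1]}(tz)\bigr)z^{-b}=t^{b}\psi_{n,b}(tz)$, whence, on ${\rm dom}(A^s)$,
\[
e^{-tA}x-r_{[n,n+1]}(tA)x=t^{b}\,\psi_{n,b}(tA)\,A^{b-s}\,A^sx,
\]
after the routine regularisation (replace $A$ by $A+\varepsilon$ and let $\varepsilon\to0$) needed to make sense of fractional powers when $A$ is not injective. Since all three norms used below ($\mathcal{LM}(\C_+)$, ${\rm H}^{\infty}(\C_+)$, and the $\mathcal B$-norm) are \emph{dilation invariant}, the factor $\psi_{n,b}(tA)$ is estimated by $\psi_{n,b}$ alone, and the $t$-dependence is carried entirely by the explicit $t^{b}$. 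Everything thus reduces to scalar bounds on $\psi_{n,b}$, the decisive feature being the transition of $|e^{-w}-r_{[n,n+1]}(w)|$ from its high-order zero at the origin to its $\mathcal A$-stable bound at the scale $|w|\sim n$.

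I would prove (iii) first, as it is the cleanest. Take $b=s$, so that $A^{b-s}=I$. Boundedness of the ${\rm H}^{\infty}(\C_+)$-calculus gives $\|\psi_{n,s}(tA)\|\le C\,\|\psi_{n,s}\|_{{\rm H}^{\infty}(\C_+)}$, and it remains to establish the scalar estimate
\[
\sup_{w\in\C_+}\bigl|e^{-w}-r_{[n,n+1]}(w)\bigr|\,|w|^{-s}\le 2\,n^{-s}.
\]
For $|w|\ge n$ this is immediate from $\mathcal A$-stability, since $|e^{-w}-r_{[n,n+1]}(w)|\le|e^{-w}|+|r_{[n,n+1]}(w)|\le2$ and $|w|^{-s}\le n^{-s}$; for $|w|\le n$ one uses the high-order vanishing at $0$, quantified through the Hermite integral representation of the Pad\'e remainder, to obtain the pointwise bound $|e^{-w}-r_{[n,n+1]}(w)|\le 2\,(|w|/n)^{s}$. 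Combining the two ranges yields $\|\psi_{n,s}\|_{{\rm H}^{\infty}(\C_+)}\le2n^{-s}$, hence the bound $2C(t/n)^{s}\|A^sx\|$, with the constant $2$ inherited verbatim from $\mathcal A$-stability.

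Part (i) replaces the ${\rm H}^{\infty}$-calculus by the HP-calculus; again $b=s$, so $\|e^{-tA}x-r_{[n,n+1]}(tA)x\|\le Mt^{s}\|\psi_{n,s}\|_{\mathcal{LM}(\C_+)}\|A^sx\|$, and the task is to show $\|\psi_{n,s}\|_{\mathcal{LM}(\C_+)}\le C\,n^{-(s-1/2)}$. As the $\mathcal{LM}(\C_+)$-norm is the total-variation norm of the representing measure, i.e. the $L^1(\R_+)$-norm of the inverse Laplace transform, I would estimate it by Carlson's inequality in the scale-invariant form $\|g\|_{L^1(\R)}\le\sqrt2\,\|g\|_{L^2(\R)}^{1/2}\,\|(\,\cdot\,)g\|_{L^2(\R)}^{1/2}$ applied, after a dyadic partition of unity in the frequency variable, to the boundary trace of $\psi_{n,s}$, exactly as in \cite{BT}. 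The two $L^2$-factors are controlled by the pointwise Pad\'e bounds of part (iii) and their differentiated analogues, and the transition scale $|w|\sim n$ now produces $n^{-(s-1/2)}$. The loss of the exponent $1/2$, together with the attendant restriction $s>1/2$, is intrinsic to Carlson's inequality, and the condition $n\ge s-\tfrac12$ is exactly what keeps $\psi_{n,s}$ inside $\mathcal{LM}(\C_+)$.

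Part (ii) is where the Hilbert-space geometry enters, and I expect the scalar estimate there to be the main obstacle. Now take $b=a\in(0,s)$. For the generator of a bounded $C_0$-semigroup on a Hilbert space the $\mathcal B$-calculus is bounded, $\|\psi_{n,a}(A)\|\le C(M)\,\|\psi_{n,a}\|_{\mathcal B}$, so it remains to prove $\|\psi_{n,a}\|_{\mathcal B}\le C\,n^{-a}$; this is the technical heart, requiring \emph{sharp} control of $e^{-w}-r_{[n,n+1]}(w)$ \emph{and of its first derivative} at the scale $|w|\sim n$ in order to bound $\int_0^\infty\sup_{\beta}|\psi_{n,a}'(\alpha+i\beta)|\,d\alpha$, for which one must exploit that the zeros and poles of $r_{[n,n+1]}$ lie in $\C_+$ at distance $\gtrsim n$ from the origin. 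Exponential stability is then used only to split the exponent: since $\sigma(A)\subset\{\operatorname{Re}w\ge\omega\}$, the power $A^{-(s-a)}$ is bounded with $\|A^{-(s-a)}\|\le M\omega^{-(s-a)}$, and writing $A^{b-s}A^s=A^{-(s-a)}A^s$ in the reduction converts the available $t^{a}n^{-a}$ into the claimed $(t/n)^{a}\|A^sx\|$ while reproducing the stated dependence $C=C(M,\omega,s-a)$ of the constant.
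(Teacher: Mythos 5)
First, note that the paper does not actually prove Theorem \ref{Egert}: it is quoted from \cite{EgertR}, and the paper only records how the original proofs go ((i) Fourier multipliers and Carlson's inequality, (ii) the Haase--Rozendaal transference machinery, (iii) direct ${\rm H}^\infty$-norm estimates). Measured against that, your part (iii) is correct and is essentially the original argument: with $b=s$ the reduction gives $\|\psi_{n,s}(tA)\|\le C\|\psi_{n,s}\|_{{\rm H}^\infty(\C_+)}$, and the scalar bound follows from the Perron representation \eqref{AppDelta}/\eqref{EsR1}, since for $|w|\le n$ one has $|\Delta_{r_{[n,n+1]}}(w)|\,|w|^{-s}\le \tfrac12 |w|^{2n+2-s}(n+1)^{-(2n+2)}\le \tfrac12 n^{-s}$ precisely when $2n+2\ge s$, i.e. $n>s/2-1$, while for $|w|\ge n$ the $\mathcal A$-stability bound $2|w|^{-s}\le 2n^{-s}$ suffices. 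Your part (i) also follows the original route (HP-calculus reduction, then Carlson's inequality with a dyadic partition as in \cite{BT}), but there you prove nothing: the entire content of (i) is the estimate $\|\psi_{n,s}\|_{\mathcal{LM}(\C_+)}\le Cn^{-(s-1/2)}$, which you defer wholesale to ``exactly as in [BT]''. As a plan it is the right plan; as a proof it is incomplete.

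Part (ii) contains a genuine gap. You replace the transference argument by the $\mathcal B$-calculus and reduce everything to the scalar bound $\|\psi_{n,a}\|_{\mathcal B}\le Cn^{-a}$ \emph{without} a logarithmic factor --- which you correctly identify as ``the technical heart'' but then simply assert. That assertion is exactly the log-free version of Lemma \ref{RozB}: the paper's best bound \eqref{BfR} carries the term $4\log(2n+1)$, and the paper states explicitly that it does not know whether this logarithm can be removed (it would follow from $\sup_{n}\|r_{[n,n+1]}\|_{\mathcal B_0}<\infty$, described as out of reach). So your argument rests on what is, as far as this paper and its authors know, an open problem. Nor can the gap $a<s$ rescue the scheme: using Lemma \ref{RozB} at exponent $a$ yields only $(t/n)^a\log(2n+1)\|A^sx\|$ (the bounded-semigroup analogue, which is in effect the paper's Theorem \ref{mainER}); using it at an intermediate exponent $b\in(a,s)$ together with $\|A^{-(s-b)}\|\le M\omega^{-(s-b)}$ absorbs the logarithm into $n^{a-b}$ but replaces $t^a$ by $t^b$, which is strictly worse for $t\ge1$, and repairing the large-$t$ regime separately again requires a uniform-in-$n$ bound on $\|r_{[n,n+1]}(tA)\|$ --- the same open problem. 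This is precisely why \cite{EgertR} proves (ii) by transference, where exponential stability enters the functional-calculus estimate itself, and not merely through the boundedness of $A^{-(s-a)}$, which is the only place you use it.
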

As in the case of Theorem \ref{brento}, the proof of (i) was based on Fourier multiplier estimates and Carlson's inequality,
while the proof of (ii) 
relied on an advanced transference technique developed in \cite{HaaseT} and \cite{HaaseR}.
Note that the assumption of exponential stability in (ii) is indispensable for the methods of \cite{EgertR}.
The assumption of boundedness of the ${\rm H}^\infty$-calculus in (iii) allows one to employ direct estimates
of rational functions in the ${\rm H}^\infty$-norm. 
 It is difficult to express it in abstract terms, especially if $A$ is far from being sectorial. 
For $L^p$ (and more general) spaces $X$ and $C_0$-groups $(e^{-tA})_{t \in \R}$ on $X,$ decaying exponentially as $t \to \infty,$ the stability properties and approximation rates
in Theorems \ref{brento} and \ref{Egert} were improved in \cite{Rozen}. However, the improvements are still  weaker than our Hilbert space results
formulated in the next section.

\subsection{Rational approximation via the $\mathcal B$-calculus: the results}

Recently,  the HP-calculus was used  in \cite{GT} and \cite{GT1}
to unify and extend a number of basic approximation formulas in semigroup theory
and to equip them with optimal approximation rates depending on smoothness 
of initial data. The formulas were recast in terms of completely monotone
and Bernstein functions, and thus
the arguments relied essentially on the study of Laplace transforms of positive measures.
The positivity of measures 
 led to results which are optimal even in the Hilbert space setting.

While the approach in \cite{GT} and \cite{GT1} encompassed some rational functions, e.g. $(1+z)^{-1}$
originating from Euler's formula, the studies of rational approximations for semigroups
often require  to deal with Laplace transforms of, in general, non-positive measures.
Obtaining sharp estimates in this case is problematic,
and the optimal bounds require a finer functional calculus.
The theory of such a calculus, called the $\mathcal B$-calculus in \cite{BaGoTo}, was created recently
 in \cite{BaGoTo} and \cite{BaGoTo1} in the setting of bounded $C_0$-semigroups on Hilbert spaces.
Being based on a Banach algebra $\mathcal B \supset \mathcal{LM}(\C_+),$ the $\mathcal B$-calculus strictly extends the HP-calculus, and leads to, in general,
sharper operator-norm estimates than the ones provided by the HP-calculus.
For more details on the $\mathcal B$-calculus see Section \ref{fcc}.

In the present article, using the $\mathcal{B}$-calculus,
we substantially improve Theorems \ref{brento} and \ref{Egert} if $X$ is a Hilbert space.
Our first result reads as follows.
\begin{thm}\label{mainBT}
Let $-A$ be the generator of a  $C_0$-semigroup $(e^{-tA})_{t\ge 0},$
on a Hilbert space $X,$ and $ M:=\sup_{t \ge 0}\|e^{-tA}\|<\infty.$
\begin{itemize}
\item [(i)] If $r$ is an ${\mathcal A}$-stable rational function, then there exists $C=C(r)$ such that for all $n \in \mathbb N$ and $t \ge 0,$ 
\begin{equation}\label{A220A2}
\|r^n(tA/n)\|\le C \pi M^2 (1+\log(n)).
\end{equation}
 \item [(ii)] If $r$ is an $\mathcal A$-stable rational approximation of order $q$ to the exponential and $s\in (0,q+1],$ then there exists $C=C(s,r)>0$
such that
\begin{equation}\label{AGT}
\|e^{-tA}x-r^n(tA/n)x\|\le C\pi M^2\frac{t^s}{n^{sq/(q+1)}}
\|A^{s}x\|
\end{equation}
for all $x\in {\rm dom}\, (A^s),$  $n \in \mathbb N,$ and $t \ge 0.$
\end{itemize}
\end{thm}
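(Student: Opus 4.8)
The plan is to derive both estimates from the basic norm bound of the $\Bes$-calculus, namely $\|f(A)\|\le \pi M^2\|f\|_{\Bes}$ for $f\in\Bes$ (Section~\ref{fcc}), where $\|f\|_{\Bes}=|f(\infty)|+\|f\|_{\Bq}$ and $\|f\|_{\Bq}=\int_0^\infty\sup_{\beta\in\R}|f'(\alpha+i\beta)|\,d\alpha$. The structural fact I would exploit is that the seminorm $\|\cdot\|_{\Bq}$ is invariant under the dilations $z\mapsto tz$: if $f_t(z)=f(tz)$, then $f_t'(z)=tf'(tz)$ and the substitutions $\alpha\mapsto t\alpha$, $\beta\mapsto t\beta$ give $\|f_t\|_{\Bq}=\|f\|_{\Bq}$. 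Consequently, in (i) the symbol $z\mapsto r^n(tz/n)$ has $\Bq$-seminorm equal to $\|r^n\|_{\Bq}$ for every $t\ge0$, while in (ii), with $\Delta_{n,r,s}(z):=(e^{-z}-r^n(z/n))z^{-s}$, the symbol $h_{n,t}(z):=(e^{-tz}-r^n(tz/n))z^{-s}=t^s\Delta_{n,r,s}(tz)$ satisfies $h_{n,t}(\infty)=0$ and $\|h_{n,t}\|_{\Bes}=t^s\|\Delta_{n,r,s}\|_{\Bq}$. Both parts thus reduce to a function-theoretic estimate of one $\Bq$-seminorm, with all $t$-dependence carried by the explicit prefactor.

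For (i), since $|r(\infty)|\le1$ we have $\|r^n(tA/n)\|\le\pi M^2(1+\|r^n\|_{\Bq})$ and $\|r^n\|_{\Bq}=n\int_0^\infty\sup_\beta|r(\alpha+i\beta)|^{n-1}|r'(\alpha+i\beta)|\,d\alpha$. I would estimate this using the boundary inequality $1-|r(\alpha+i\beta)|\gtrsim\alpha/(1+\beta^2)$ for $\mathcal A$-stable $r$ and $0\le\alpha\le1$ (a Julia--Carath\'eodory bound near the points of $i\R$ where $|r|=1$; elsewhere $|r|^{n-1}$ is already exponentially small) together with $|r'(\alpha+i\beta)|\lesssim(1+\beta^2)^{-1}$. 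These give $|r(\alpha+i\beta)|^{n-1}\lesssim e^{-cn\alpha/(1+\beta^2)}$, and optimising over $u=(1+\beta^2)^{-1}\in(0,1]$ yields $\sup_\beta|r|^{n-1}|r'|\lesssim(n\alpha)^{-1}$ for $\alpha\gtrsim1/n$ and $\lesssim1$ for $\alpha\lesssim1/n$. Hence $\int_0^\infty\sup_\beta(\cdots)\,d\alpha\lesssim n^{-1}+n^{-1}\int_{1/n}^1\alpha^{-1}\,d\alpha\lesssim n^{-1}(1+\log n)$, the range $\alpha\gtrsim1$ contributing only $O(n^{-1})$, and multiplying by $n$ gives \eqref{A220A2}.

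For (ii), I would first record $e^{-tA}x-r^n(tA/n)x=h_{n,t}(A)A^sx$ for $x\in{\rm dom}(A^s)$, which holds because $z^sh_{n,t}(z)=e^{-tz}-r^n(tz/n)$ and the $\Bes$-calculus is compatible with the fractional powers $A^s$ (the product rule, the non-injective case being standard). Then $\|e^{-tA}x-r^n(tA/n)x\|\le\pi M^2t^s\|\Delta_{n,r,s}\|_{\Bq}\|A^sx\|$, and it remains to prove $\|\Delta_{n,r,s}\|_{\Bq}\lesssim n^{-sq/(q+1)}$. With $g(z)=e^{-z}-r^n(z/n)$, the key input is the pair of bounds
\[
|g(z)|\lesssim e^{-c\operatorname{Re} z}\min\Big(\tfrac{|z|^{q+1}}{n^{q}},\,1\Big),\qquad|g'(z)|\lesssim e^{-c\operatorname{Re} z}\min\Big(\tfrac{|z|^{q}}{n^{q}},\,1\Big),
\]
which combine the order-$q$ flatness at $0$ with the comparison $r^n(z/n)=e^{-z}(1+O(|z|^{q+1}/n^q))$ valid for $|z|\lesssim R:=n^{q/(q+1)}$ and, for $R\le|z|\le n$, the boundary decay $|r(z/n)|^n\lesssim e^{-c\operatorname{Re} z}$; here $R$ is the threshold at which $|z|^{q+1}/n^q=1$. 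Inserting these into $\Delta'=g'z^{-s}-sgz^{-s-1}$ gives $|\Delta'(\alpha+i\beta)|\lesssim e^{-c\alpha}\theta(|z|)$ with $\theta(\rho)=\rho^{q-s}n^{-q}$ for $\rho\le R$ and $\theta(\rho)=\rho^{-s}$ for $\rho\ge R$, a unimodal profile peaking at $\rho=R$ with $\theta(R)=R^{-s}=n^{-sq/(q+1)}$. Taking $\sup_\beta$ and integrating $d\alpha$, the factor $e^{-c\alpha}$ cuts the integral off at $\alpha\sim1$, so the plateau contributes $\int_0^\infty e^{-c\alpha}\,d\alpha\cdot R^{-s}\lesssim R^{-s}$, while the small- and large-$\rho$ parts are dominated by $n^{-q}\le R^{-s}$ and an exponentially small tail. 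This gives $\|\Delta_{n,r,s}\|_{\Bq}\lesssim n^{-sq/(q+1)}$ and hence \eqref{AGT}.

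The hard part is the refined pointwise estimate on $g$ and $g'$, and in particular the extraction of the factor $e^{-c\operatorname{Re} z}$: without it the purely radial bound $|\Delta'|\lesssim\theta(|z|)$ would over-count the $\alpha$-integral by a factor $R$ and produce the wrong rate $n^{(1-s)q/(q+1)}$. Securing this factor relies on the sharp boundary inequality $1-|r(w)|\gtrsim\operatorname{Re}(w)/(1+|w|^2)$ and on the exponential comparison $r^n(z/n)=e^{-z}(1+O(|z|^{q+1}/n^q))$, both held uniformly in $n$ and accompanied by a mild case distinction according to the behaviour of $r$ on $i\R\cup\{\infty\}$ (for instance $|r(\infty)|=1$, as for the Crank-Nicolson function). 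Once these are in place the remaining steps---taking the supremum of a unimodal profile in $\beta$ and evaluating a gamma-type integral in $\alpha$---are routine.
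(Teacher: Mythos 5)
Your overall reduction---dilation invariance \eqref{scal} of $\|\cdot\|_{\mathcal B_0}$ plus the calculus bound \eqref{b_est}, so that both parts become $\mathcal B_0$-seminorm estimates for $r^n$ and for $\Delta_{n,r,s}$---is exactly the paper's strategy. In part (i) your route is genuinely different from the paper's: the paper splits into the case where $r$ is a finite Blaschke product (factoring out one zero and reducing, via Lemma \ref{StabLP}, to the known Cayley-transform bound \eqref{CJFA1}) and the case $|r|\not\equiv 1$ on $i\R$ (Lemma \ref{NC1}, using $|r(z)|\le e^{-\alpha\mathrm{Re}\,z}$ near $0$ and $|r(z)|\le e^{-\alpha\mathrm{Re}\,(1/z)}$ near $\infty$), whereas you use a single Harnack/Julia--Carath\'eodory-type inequality $1-|r(z)|\gtrsim \mathrm{Re}\,z/(1+|z|^2)$. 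That inequality is indeed true for every non-constant $\mathcal A$-stable rational $r$ (via the Herglotz representation of the positive harmonic function $-\log|r|$ when $r$ has no zeros in $\C_+$, rationality excluding the linear term, and via a single Blaschke factor otherwise), and your optimisation then yields \eqref{Cgen1}; so part (i) is viable and arguably more unified, though the key inequality is asserted rather than proved, and your parenthetical ``elsewhere $|r|^{n-1}$ is already exponentially small'' is false for Blaschke products such as Crank--Nicolson, where $|r|\to1$ at infinity: it is the full inequality, not exponential smallness, that must carry the range $\alpha\gtrsim1$.

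Part (ii) contains a genuine gap. Your ``key input'' bounds $|g(z)|,|g'(z)|\lesssim e^{-c\mathrm{Re}\,z}\min(\cdot,1)$ are false in the region $|z|\gtrsim n$ whenever $|r(\infty)|=1$: for Crank--Nicolson and $z=n^2$ one has $|r^n(z/n)|=|r(n)|^n\to e^{-4}$, not $O(e^{-cn^2})$; the correct decay for $|z/n|$ large is $e^{-cn\,\mathrm{Re}\,(n/z)}$, as in \eqref{Ep12}, not $e^{-c\mathrm{Re}\,z}$. Moreover this region is not ``an exponentially small tail'': using only $|r|\le 1$ and $|r'(w)|\lesssim (1+|w|)^{-2}$, the contribution of $\{|z|\ge n\}$ to $\int_0^\infty\sup_\beta |(r^n(z/n))'|\,|z|^{-s}\,d\alpha$ is of order $n^{1-s}$, which exceeds the target $n^{-sq/(q+1)}$ for every $s<q+1$. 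This is precisely the place where the paper needs its uniform stability estimate $\|r^n\eta_s\|_{\mathcal B_0}\le C_1(r,s)$ (Corollary \ref{MAC}, estimate \eqref{FolP1}), one of its main technical results, to control the outer region in Lemma \ref{T1P}. Your own boundary inequality could in fact be made to do this job (it gives $|r^{n-1}(z/n)|\lesssim e^{-c\alpha n^2/(n^2+|z|^2)}$ and hence an $O(n^{-s})$ contribution there), but that argument is nowhere in your text, and what is written in its place is incorrect.

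A second gap occurs at the endpoint $s=q+1$, which the theorem includes. Splitting $|\Delta'|\le |g'||z|^{-s}+s|g||z|^{-s-1}$ destroys the cancellation between the two leading terms (near $0$ they are $\mp a(q+1)z^{q-s}n^{-q}e^{-z}$), so your profile $\theta(\rho)=\rho^{q-s}n^{-q}=\rho^{-1}n^{-q}$ is non-integrable at the origin after taking $\sup_\beta$ (the supremum sits at $\beta=0$), and your bound for $\|\Delta_{n,r,q+1}\|_{\mathcal B_0}$ diverges; even for $s\in(q,q+1)$ your claim that the small-$\rho$ part is ``dominated by $n^{-q}$'' only holds with a constant blowing up like $(q+1-s)^{-1}$. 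The paper's Lemma \ref{rates} is engineered exactly to retain this cancellation: it differentiates the multiplicative representation $\Delta_{r,n,s}=(1-e^{az^{q+1}/n^q}e^{u_n})e^{-z}z^{-s}$ and produces the bound $\bigl(|a|(q+1-s)|z|^{q-s}n^{-q}+cn^{-\delta s}\bigr)e^{-\mathrm{Re}\,z}$, whose first term vanishes identically at $s=q+1$. Finally, note that the identity $e^{-tA}x-r^n(tA/n)x=h_{n,t}(A)A^sx$ is not automatic from a finite $\mathcal B_0$-norm: the paper proves $\Delta_{r,n,s}\in\mathcal{LM}(\C_+)$ (Lemma \ref{deltalm}) precisely so that the product rule of the extended calculus applies; this point deserves more than the phrase ``standard''.
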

The estimates given in \eqref{AGT} are optimal with respect to approximation rates as
we show in Theorem \ref{CorMa} below. 
The optimality of \eqref{A220A2} is not clear, and it is related to the long-standing open problem
on power boundedness for Cayley's transform of $A,$ see e.g. \cite{BanachG} and Section \ref{fcc} for more on that.

If a rational function $r$ is $\mathcal A$-stable, then the limit 
\[
r(\infty):=\lim_{{\rm Re}\, z \to \infty}r(z),  
\]
exists and $|r(\infty)|\le 1.$ The stability part of Theorem \ref{mainBT} can be strengthened if
one assumes
that $r$ is small at infinity in the sense that
$|r(\infty)|<1.$
The latter smallness condition, called sometimes  strong $\mathcal A$-stability or $L$-stability in the literature, is part of condition (*) in \cite[p. 687]{BT}.
In the situation of Hilbert spaces it suffices to employ this smallness condition alone. 
As we show below the condition ensures  convergence of rational approximations on the whole of $X,$
the property appearing usually in the study of rational approximations for \emph{holomorphic} semigroups.
\begin{thm}\label{mainBTin}
Let $A$ be as in Theorem \ref{mainBT}, and 
let $r$ be 
an ${\mathcal A}$-stable  
rational function satisfying 
\begin{equation}\label{inftyi}
|r(\infty)|<1.
\end{equation}
Then there exists $C=C(r)$ such that
\begin{equation}\label{Ainf}
\|r^n(tA/n)\|\le  C \pi M^2, \qquad n \in \mathbb N, \quad t \ge 0.
\end{equation}
As a consequence,  if $r$ is an $\mathcal A$-stable rational approximation of any order 
to the exponential, and \eqref{inftyi} holds,
then
\begin{equation}\label{conve}
\lim_{n \to \infty} r^n(tA/n)x=e^{-tA}x, \qquad x \in X, \quad  t \ge 0.
\end{equation}
 \end{thm}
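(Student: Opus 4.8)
The plan is to deduce both assertions from a single uniform bound on the $\mathcal B$-norm of the functions $r_n(z):=r^n(z/n)$, $n\in\N$. Indeed, the $\mathcal B$-calculus provides an estimate of the form $\|f(B)\|\le c\,M^2\|f\|_{\mathcal B}$ for every generator $-B$ of a bounded $C_0$-semigroup on a Hilbert space with $\sup_{t\ge 0}\|e^{-tB}\|\le M$ (see Section~\ref{fcc}). Since $-tA$ generates a bounded semigroup with the same constant $M$ for each $t\ge 0$ and $r_n(tA)=r^n(tA/n)$, applying this estimate to $B=tA$ and $f=r_n$ shows that \eqref{Ainf} is a consequence of $\sup_{n\in\N}\|r_n\|_{\mathcal B}\le C(r)<\infty$ (the $\mathcal B$-norm of $r_n$ does not depend on $t$). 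Thus the whole matter reduces to a scalar estimate, and the gain over Theorem~\ref{mainBT}(i) must come entirely from the hypothesis $|r(\infty)|<1$.

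The decisive part of the $\mathcal B$-norm is $\|f\|_{\mathcal B_0}=\int_0^\infty\sup_{\beta\in\R}|f'(\alpha+i\beta)|\,d\alpha$; the remaining part contributes $|r_n(\infty)|=|r(\infty)|^n\le 1$. Using $r_n'(z)=r^{n-1}(z/n)\,r'(z/n)$ and rescaling $\alpha\mapsto n\alpha,\ \beta\mapsto n\beta$ one finds
\[
\|r_n\|_{\mathcal B_0}=n\int_0^\infty\Phi_n(\alpha)\,d\alpha,\qquad
\Phi_n(\alpha):=\sup_{\beta\in\R}|r(\alpha+i\beta)|^{\,n-1}\,|r'(\alpha+i\beta)|.
\]
I split $\C_+$ into a far region $\{\operatorname{Re}w\ge\delta\}$ and a boundary strip $\{0<\operatorname{Re}w<\delta\}$. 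On the far region $|r(\infty)|<1$ is used together with the maximum principle to obtain $\rho:=\sup_{\operatorname{Re}w\ge\delta}|r(w)|<1$, and with $|r'(w)|=\mathrm O(|w|^{-2})$ to obtain $\int_\delta^\infty\sup_\beta|r'(\alpha+i\beta)|\,d\alpha<\infty$; hence $n\int_\delta^\infty\Phi_n\le n\rho^{\,n-1}\cdot\text{const}$ stays bounded. This is precisely the contribution that generated the spurious factor $\log n$ in Theorem~\ref{mainBT}(i) when $|r(\infty)|=1$.

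The boundary strip is the crux, and the main obstacle lies here. First, $|r(\infty)|<1$ forces $|r|\not\equiv 1$ on $i\R$: were $|r|=1$ throughout $i\R$, then $r$ would be inner for $\C_+$ and $r(w)\,\overline{r(-\bar w)}\equiv 1$, forcing $|r(\infty)|=1$. Hence the set $\{\beta:|r(i\beta)|=1\}=\{\beta_1,\dots,\beta_k\}$ is finite, and on the part of the strip bounded away from these points $|r|\le\rho_3<1$, contributing again only $\mathrm O(n\rho_3^{\,n-1})$ to $n\int_0^\delta\Phi_n$. Near each $i\beta_j$ set $\eta_j:=r(i\beta_j)$, $|\eta_j|=1$, and apply Hopf's lemma to the nonnegative harmonic function $1-\operatorname{Re}(\bar\eta_j\,r)$, which vanishes at $i\beta_j$: its inner normal derivative is strictly positive, i.e. $\operatorname{Re}(\bar\eta_j\,r'(i\beta_j))<0$ (in particular $r'(i\beta_j)\ne0$). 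Since $\partial_\alpha|r|^2=2\operatorname{Re}(\bar r\,r')$ is continuous and equals $2\operatorname{Re}(\bar\eta_j r'(i\beta_j))<0$ at $i\beta_j$, it stays below $-c_j<0$ in a neighbourhood, giving $|r(\alpha+i\beta)|\le 1-c\alpha$ on the strip with $c=\tfrac12\min_j c_j>0$. Therefore $\Phi_n(\alpha)\le K\,e^{-c(n-1)\alpha}$ there and $n\int_0^\delta K e^{-c(n-1)\alpha}\,d\alpha\le 2K/c$ uniformly in $n$. Collecting the three contributions (the case $n=1$ being trivial) yields $\sup_n\|r_n\|_{\mathcal B}<\infty$, hence \eqref{Ainf}. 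The care needed here is to make the decay rate $c$ uniform over the finitely many maxima and the $\beta$-windows.

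Finally, \eqref{conve} follows from \eqref{Ainf} by density. If $r$ is in addition an $\mathcal A$-stable approximation to the exponential of some order $q\ge 1$, then \eqref{AGT} with $s=1$ gives, for fixed $t\ge0$ and $x\in\operatorname{dom}(A)$, $\|e^{-tA}x-r^n(tA/n)x\|\le C\pi M^2\,t\,n^{-q/(q+1)}\|Ax\|\to0$ as $n\to\infty$. Since $\operatorname{dom}(A)$ is dense in $X$, the operators $r^n(tA/n)$ are uniformly bounded by \eqref{Ainf}, and $\|e^{-tA}\|\le M$, a standard $\varepsilon/3$-argument extends this convergence from $\operatorname{dom}(A)$ to all of $X$, which is \eqref{conve}.
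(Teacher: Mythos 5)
Your proof is correct, and its overall architecture coincides with the paper's: both reduce \eqref{Ainf} to a uniform scalar bound $\sup_{n}\|r^n\|_{\mathcal B_0}<\infty$ via the scaling invariance \eqref{scal} and the calculus bound \eqref{b_est}, and both obtain \eqref{conve} by applying \eqref{AGT} with $s=1$ on ${\rm dom}(A)$ and then using density together with the uniform bound \eqref{Ainf} (this is exactly the ``alternative'' convergence argument given in the paper). Where you genuinely diverge is in the proof of the key scalar estimate, which the paper isolates as Lemma \ref{infinity}. The paper's mechanism is a single Phragm\'en--Lindel\"of-type comparison: choose $R$ with $|r|\le\omega<1$ on $\overline{\C}_+\setminus\mathbb D_R^{+}$ (possible since $|r(\infty)|<1$), apply the maximum principle to $e^{\alpha z}r(z)$ with $\alpha=|\log\omega|/R$ on the half-disk $\mathbb D_R^{+}$ to get $|r(z)|\le e^{-\alpha{\rm Re}\,z}$ there, and combine this with $|(r^n(z))'|\le Cn|r(z)|^{n-1}|1+z|^{-2}$; integration then gives the uniform bound with an explicit constant. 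You instead localise at the finitely many unimodular boundary points $i\beta_j$ and invoke Hopf's boundary point lemma for the positive harmonic function $1-{\rm Re}(\bar\eta_j r)$ to get ${\rm Re}\big(\bar\eta_j r'(i\beta_j)\big)<0$, hence the linear majorant $|r(\alpha+i\beta)|\le 1-c\alpha$ near each $i\beta_j$, with compactness supplying $|r|\le\rho<1$ on the rest of the half-plane. Both mechanisms produce the same integrable decay $e^{-c(n-1)\alpha}$ and hence the same conclusion; the paper's is shorter, requires no identification of the set $\{\beta:|r(i\beta)|=1\}$, and recycles the maximum-principle trick already used in Lemma \ref{NC1}, whereas yours is more geometric --- it exhibits that the $\mathcal B_0$-mass concentrates at the points where $|r|$ touches the unit circle and that the decisive decay rate is governed by the normal derivative of $|r|$ there --- at the cost of the extra bookkeeping (uniformity of $c$ over the finitely many windows, choice of $\delta$) that you correctly flag.
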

Observe that the first and the second subdiagonal Pad\'e approximations to the exponential
provide natural examples of $r$ satisfying the assumptions of Theorem \ref{mainBTin}.

Using the power of the $\mathcal B$-calculus,
we also substantially strengthen Theorem \ref{Egert} 
and obtain an estimate similar to Theorem \ref{Egert},(iii)
for \emph{arbitrary} bounded Hilbert space $C_0$-semigroups,
however modulo a logarithmic correction term.
Namely, the next statement holds.
\begin{thm}\label{mainER}
Let $-A$ be the generator of a $C_0$-semigroup $(e^{-tA})_{t\ge 0}$ on a Hilbert space $X$,
and let $M:=\sup_{t \ge 0}\|e^{-tA}\|<\infty.$
If $(r_{[n,n+1]})_{n \in \N}$ are the first subdiagonal Pad\'e approximations
to the exponential,
then for every $s>0$ there exists $C=C(s)>0$ 
such that 
\begin{equation}\label{mainER1}
\|e^{-tA}x-r_{[n,n+1]}(tA)x\|
\le \frac{\pi M^2 t^s}{n^s}
 \left(C
+4\log(2n+1)
\right)
\|A^s x\|
\end{equation}
for all $x\in {\rm dom}\, (A^s), t \ge 0,$ and $n \in \N$ such that   $n \ge (s-1)/2.$
\end{thm}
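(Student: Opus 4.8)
The plan is to feed the difference into the $\mathcal B$-calculus and thereby reduce the theorem to a single scalar norm estimate. For $n\ge (s-1)/2$ put
\[
g_n(z):=\frac{e^{-z}-r_{[n,n+1]}(z)}{z^{s}},\qquad z\in\C_+ .
\]
Since $r_{[n,n+1]}$ is the first subdiagonal Pad\'e approximant, it reproduces the exponential to order $2n+1$, so $e^{-z}-r_{[n,n+1]}(z)=\mathrm{O}(z^{2n+2})$ as $z\to0$; the hypothesis $n\ge(s-1)/2$ means precisely $2n+2-s\ge1$, which together with $r_{[n,n+1]}(\infty)=0$ and the $\mathcal A$-stability bound $|r_{[n,n+1]}|\le1$ shows that $g_n\in\mathcal B$ with $g_n(\infty)=0$. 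As $-A$ and $-tA$ generate $C_0$-semigroups with the same bound $M$, and since $e^{-z}-r_{[n,n+1]}(z)=z^{s}g_n(z)$, compatibility of the $\mathcal B$- and Hille--Phillips calculi gives, for $x\in{\rm dom}(A^{s})$,
\[
\|e^{-tA}x-r_{[n,n+1]}(tA)x\|=t^{s}\,\|g_n(tA)A^{s}x\|\le \pi M^{2}\,t^{s}\,\|g_n\|_{\mathcal B}\,\|A^{s}x\| ,
\]
the factor $M^{2}$ (rather than $M$) being exactly the Hilbert-space gain of the $\mathcal B$-calculus. Thus it suffices to prove the scalar bound $\|g_n\|_{\mathcal B}\le n^{-s}\bigl(C+4\log(2n+1)\bigr)$.

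For the $\mathcal B$-norm I would use $\|g_n\|_{\mathcal B}=|g_n(\infty)|+\int_0^{\infty}\sup_{\beta\in\R}|g_n'(\alpha+i\beta)|\,d\alpha$, where the first term vanishes. The basic input is the classical integral representation of the Pad\'e error,
\[
e^{-z}-r_{[n,n+1]}(z)=\frac{(-1)^{n+1}z^{2n+2}}{(2n+1)!\,Q_{n+1}(z)}\int_0^{1}e^{-zu}\,u^{n+1}(1-u)^{n}\,du ,
\]
with $Q_{n+1}$ the normalised Pad\'e denominator, whose zeros all lie in $\{\operatorname{Re} z<0\}$, so that $|Q_{n+1}|$ is bounded below on $\overline{\C_+}$. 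From this representation, the $\mathcal A$-stability bound $|r_{[n,n+1]}|\le1$, and a Cauchy estimate on discs contained in $\C_+$ for the derivative, one obtains pointwise bounds, uniform in $n$, for $e^{-z}-r_{[n,n+1]}(z)$ and for its derivative on each vertical line; these are the only facts about Pad\'e approximants that enter.

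With $g_n'(z)=\bigl(-e^{-z}-r_{[n,n+1]}'(z)\bigr)z^{-s}-s\bigl(e^{-z}-r_{[n,n+1]}(z)\bigr)z^{-s-1}$ I would split the $\alpha$-integral at the transition scale $|z|\sim 2n+1$ dictated by the poles of $r_{[n,n+1]}$. On small scales the vanishing order $z^{2n+2}$ makes $g_n$ and $g_n'$ negligible and yields only the bounded constant $C$; the main contribution comes from the middle range, where $|e^{-z}-r_{[n,n+1]}(z)|=\mathrm{O}(1)$ while $|z|^{-s}\sim n^{-s}$, producing the factor $n^{-s}$. The logarithm is generated in a single regime: there the surviving part of $\sup_\beta|g_n'|$ behaves like $n^{-s}/\alpha$, and integrating this over the band of scales between $\mathrm{O}(1)$ and $\mathrm{O}(n)$ gives $n^{-s}\log(2n+1)$ --- the argument $2n+1$ being exactly the approximation order --- while a careful accounting of constants produces the coefficient $4$.

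The main obstacle is the pointwise control claimed in the second and third paragraphs: deriving bounds on $r_{[n,n+1]}$ and especially on $r_{[n,n+1]}'$ on vertical lines that are uniform in $n$ and sharp enough to yield \emph{simultaneously} the optimal rate $n^{-s}$ and the precise logarithmic term. The difficulty concentrates near the imaginary axis, where $e^{-z}$ does not decay and the behaviour is governed by the poles of $r_{[n,n+1]}$ at distance $\sim n$ from the origin; estimating $\sup_{\beta}|g_n'(\alpha+i\beta)|$ there, and matching the negligible contribution near $z=0$ to the intermediate scale without losing the sharp constant, is the technical heart of the proof. Everything else is routine bookkeeping.
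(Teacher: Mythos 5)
Your reduction of the theorem to a scalar estimate is exactly the paper's route: you form $\Delta_{r_{[n,n+1]},s}$ (your $g_n$, up to sign), invoke the product rule linking the HP- and $\mathcal B$-calculi, use the bound $\|f(A)\|\le |f(\infty)|+\pi M^2\|f\|_{\mathcal B_0}$, and absorb $t$ by scaling invariance; that part is sound. The gap is in the scalar bound itself, which is the real content of the theorem (the paper's Lemma \ref{RozB}), and there your proposal does not go through as described. First, your justification for lower-bounding the Pad\'e denominator --- ``the zeros of $Q_{n+1}$ lie in $\{\Re{z}<0\}$, so $|Q_{n+1}|$ is bounded below on $\overline{\C}_+$'' --- only yields an $n$-dependent constant; what is actually needed, and what the paper uses in \eqref{qqq}, are the \emph{uniform-in-$n$} bounds $|Q_{n+1}(z)|\ge 1$ and $|Q'_{n+1}(z)/Q_{n+1}(z)|\le 1$ on $\overline{\C}_+$, which come from the imaginary-axis estimates of \cite{Neubr} and \cite{EgertR} extended inside by the maximum principle, not from the location of the zeros alone. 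Second, and decisively, your tool for the derivative --- ``a Cauchy estimate on discs contained in $\C_+$'' --- can only give $|\Delta_{r_{[n,n+1]}}'(z)|\lesssim 1/\Re{z}$, which blows up at the imaginary axis; since $\|\cdot\|_{\mathcal B_0}$ integrates $\sup_{\beta}|\cdot'(\alpha+i\beta)|$ down to $\alpha=0$, such a bound by itself makes the integral diverge. You acknowledge this difficulty as ``the technical heart of the proof,'' but the proposal contains no mechanism that resolves it.

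What closes exactly this gap in the paper is a pair of derivative bounds that remain finite on $i\R$: (i) differentiating Perron's representation \eqref{AppDelta} and using \eqref{qqq} gives
\begin{equation*}
|\Delta_{r_{[n,n+1]}}'(z)|\le 2\frac{|z|^{2n+1}}{(2n)!}\int_0^1(1-t)^n t^{n+1}e^{-t\Re{z}}\,dt,\qquad z\in\mathbb D^+_n,
\end{equation*}
i.e. \eqref{EsR2}, which controls the region $|z|\le n$ up to the imaginary axis and, after integration, contributes only $\mathrm{O}(n^{-s})$; (ii) for $|z|\ge n$, the Iserles--N\"orsett theorem \cite{Iserles1} that $r_{[n,n+1]}'$ is itself $\mathcal A$-stable yields the uniform bound $\|\Delta_{r_{[n,n+1]}}'\|_{{\rm H}^\infty(\C_+)}\le 2$ of \eqref{EsR4}, which combined with the Schwarz--Pick-type decay $\min\{2,x^{-1}\}$ and the elementary integral $\int_0^\infty \frac{dx}{(1+x^s)(1/(2n)+x)}\le \log(2n+1)+\frac1s$ produces both the factor $n^{-s}$ and the term $4\log(2n+1)$. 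The logarithm lives precisely in the band near $i\R$ at heights $|y|\gtrsim n$, where the \emph{only} available finite derivative bound is the one from (ii); neither (i) nor (ii) appears in your sketch, and without them the estimate in that region --- and hence the theorem --- is out of reach.
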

It would be of interest to clarify whether the logarithmic correction term  in the right hand side of \eqref{mainER1} can be omitted.
Recalling that the optimality of the logarithmic bound in \eqref{A220A2} has been open for a long time,
this problem does not look surprising.
It is also related to another, apparently open, problem of whether  $r_{[n,n+1]}(tA) \to e^{-tA}$ as $n \to \infty$ strongly for every $t \ge 0$ or, equivalently, whether $\sup_{n \in \N} \|r_{[n,n+1]}(tA)\|<\infty.$

Finishing this section, we fix some relevant notations and conventions.
For a Hilbert space $X,$ we denote by $\langle \cdot, \cdot \rangle$ the inner product of $X$, and we
 let $L(X)$ stand for the Banach algebra of all bounded linear operators on
$X.$ The domain and the spectrum of a linear operator $A$ on $X$ are
denoted by ${\rm dom}(A)$ and $\sigma(A),$  respectively.
If $A$ a densely defined linear operator $A$ on $X,$ then $A^*$ denotes the Hilbert space adjoint of $A.$

Denote by ${\rm Hol}(\Omega)$ the space of functions holomorphic on a domain $\Omega \subset \mathbb C.$
For any function $f: \mathbb C_+ \to \mathbb C$ we write $f(\infty):=\lim_{{\rm Re}\, z \to \infty}f(z)$
if the latter limit exists in $\mathbb C.$

For a subset $S$ of the complex plane $\mathbb C$ we denote by $\partial S$ the topological boundary of $S$,
and by $\overline S$ the closure of $S.$ 
The open right half-plane $\{z \in \mathbb C: {\rm Re}\, z >0\}$ is denoted by $\mathbb C_+.$
Using the notation $x+iy$ (or similar notations)
we always mean that $x={\rm Re}\, z$ and $y={\rm Im}\, z.$

Writing $C=C(r)$ for a rational function $r$ we will mean that $C$ depends on $r$ and
other parameters associated exclusively with $r,$ such as its approximation order $q$ or its domain of 
holomorphicity.
If $C$ will depend on other parameters, then they will be mentioned explicitly.

\section{Function-theoretical estimates}

It was shown in \cite[Sections 2.2 and 2.4]{BaGoTo}, see also \cite[Section 1.5]{Vitse}, 
 that if $\mathcal B$ is the space of functions $f$ holomorphic in 
 $\C_+$ and satisfying 
\begin{equation}  \label{bdef0}
\|f\|_{\mathcal B_0}:=\int_{0}^{\infty} \sup_{y \in \R }|f'(x+iy)| \, dx <\infty,
\end{equation}
then $\mathcal {LM}(\C_+) \subset \mathcal B \subset {\rm H}^\infty(\C_+),$
where ${\rm H}^\infty(\C_+)$ is the Banach algebra of  bounded holomorphic functions on $\C_+.$
Moreover, $\mathcal B$  
equipped with the norm
 $\|f\|_{\mathcal B}:=\|f\|_{{\rm H}^\infty(\C_+)}+ \|f\|_{\mathcal B_0}, f \in \mathcal B$, is a Banach algebra,
and if $\widehat \nu \in \mathcal {LM}(\C_+),$ then $\|\widehat\nu\|_{\mathcal B}\le \|\widehat \nu\|_{\mathcal {LM}(\C_+)}.$
For every $f \in \mathcal B$ the limit  $f(\infty)=\lim_{{\rm Re}\, z \to \infty}f(z)$ exists, $f(\infty) \in \C,$ 
and $\mathcal B_0:=\{f \in \mathcal B: f(\infty)=0\}$ is the closed subalgebra of $\mathcal B.$
Note that \eqref{bdef0} defines a seminorm on $\mathcal B,$  and since by \cite[Proposition 2.2, (2)]{BaGoTo} 
\[
\|f\|_{{\rm H}^\infty(\C_+)}\le |f(\infty)| + \|f\|_{\mathcal B_0}, \qquad f \in \mathcal B,
\]
it defines a norm on $\mathcal B_0$ equivalent to the norm on $\mathcal B.$

Given $s>0$ and an appropriate rational function $r$ or the first subdiagonal Pad\'e approximations $(r_{[n, n+1]})_{n \ge 1}$ to the exponential,
we obtain sharp $\mathcal B_0$-norm estimates
for sequences of the form
$(r(z/n)^n)_{n \ge 1},$  $((e^{-z}-r^n(z/n))z^{-s})_{n \ge 1}$ and $((e^{-z}-r_{[n,n+1]}(z))z^{-s})_{n \ge 1},$
arising 
in the functional calculi approach to Theorems  \ref{mainBT}, \ref{mainBTin} and \ref{mainER}.
Using the $\mathcal B$-calculus,
these estimates will then be transferred to the corresponding 
operator norm-estimates in \eqref{A220A2}, \eqref{AGT}, \eqref{Ainf} and \eqref{mainER1}.
Note that if $f \in \mathcal B,$ $a>0,$ and 
$f_a(z):=f(az),\, z \in \mathbb C_+,$
 then $f_a \in \mathcal B$ and
\begin{equation}\label{scal}
\|f_a \|_{\mathcal B_0}=\|f\|_{\mathcal B_0},  \qquad \|f_a\|_{\mathcal B}=\|f\|_{\mathcal B}.
\end{equation} 
This very simple invariance property will be useful for our studies.

\subsection{Stability estimates in terms of $\mathcal B$-norms}\label{fte}

In this section, given an $\mathcal A$-stable rational function $r,$
we will study the $\mathcal B$-norm estimates for $r^n$ and other rational 
functions mentioned above. Since $|r(\infty)|\le 1$ and $r_{[n,n+1]}(\infty)=0$
it will suffice to concentrate on estimates of their $\mathcal B_0$-(semi)norms.
The next elementary lemma will be crucial.

\begin{lemma}
If a rational function $r$ is $\mathcal A$-stable,
different from constant, and satisfies
$|r(it)|=1$ for every $t \in \mathbb R,$
then $r$ is a finite Blaschke
product:
\begin{equation}\label{bl}
r(z)=c \prod_{j=1}^m \frac{z_j-z}{\overline{z}_j+z},
\end{equation}
for some $(z_j)_{j=1}^m \subset \C_+$ and $c\in \C, |c|=1.$
\end{lemma}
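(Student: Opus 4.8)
The plan is to characterize $\mathcal{A}$-stable rational functions that are \emph{inner} on the imaginary axis, i.e.\ $|r(it)|=1$ for all $t\in\mathbb{R}$, and show they must be Blaschke products of the right-half-plane type. The natural strategy is to analyze the rational function $r$ through its zeros and poles using the maximum modulus principle together with the boundary condition $|r(it)|=1$.

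\begin{proof}[Proof sketch]
First I would observe that since $r$ is $\mathcal{A}$-stable, $r$ is holomorphic on $\mathbb{C}_+$ with $\sup_{z\in\mathbb{C}_+}|r(z)|\le 1$, and by assumption $|r(it)|=1$ on the entire imaginary axis $\partial\mathbb{C}_+$. Write $r=P/Q$ with $P,Q$ polynomials having no common factor. The poles of $r$ lie in the closed left half-plane (else $r$ would blow up in $\mathbb{C}_+$, violating boundedness), and I claim in fact they lie in the open left half-plane: a pole on $\partial\mathbb{C}_+$ is incompatible with $|r(it)|=1$ bounded. Let $z_1,\dots,z_m\in\mathbb{C}_+$ denote the zeros of $r$ in the open right half-plane, listed with multiplicity. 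The key construction is to form the finite Blaschke product
\[
B(z)=\prod_{j=1}^m \frac{z_j-z}{\overline{z}_j+z},
\]
each factor of which has modulus exactly $1$ on the imaginary axis (since $|z_j-it|=|\overline{z_j-it}|=|\overline{z}_j+it|$ for real $t$) and modulus strictly less than $1$ on $\mathbb{C}_+$.

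Next I would consider the quotient $g:=r/B$. By construction the zeros of $B$ in $\mathbb{C}_+$ cancel precisely the zeros of $r$ there, so $g$ is holomorphic and zero-free in $\mathbb{C}_+$. On the imaginary axis we have $|g(it)|=|r(it)|/|B(it)|=1$. The function $g$ is again rational, bounded on $\mathbb{C}_+$, with no zeros in $\mathbb{C}_+$; hence $1/g$ is also holomorphic and bounded on $\mathbb{C}_+$ with $|1/g(it)|=1$. Applying the maximum modulus principle (via Phragm\'en--Lindel\"of in the half-plane, using boundedness to control behavior at infinity) to both $g$ and $1/g$ forces $|g|\le 1$ and $|1/g|\le 1$ throughout $\mathbb{C}_+$, so $|g|\equiv 1$ on $\mathbb{C}_+$. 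A holomorphic function of constant modulus on an open set is constant, so $g\equiv c$ with $|c|=1$, giving $r=cB$, which is exactly \eqref{bl}.

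The main obstacle I anticipate is the careful justification at infinity. Since $r$ is rational and $\mathcal{A}$-stable, the limit $r(\infty)=\lim_{\mathrm{Re}\,z\to\infty}r(z)$ exists, but I must rule out degenerate cancellations and confirm that no pole or zero of $r$ sits \emph{on} the imaginary axis; a zero $it_0$ on $\partial\mathbb{C}_+$ would contradict $|r(it_0)|=1$, while a pole there contradicts the boundedness of $|r(it)|\equiv 1$. Counting degrees then shows $m$ equals the number of right-half-plane zeros, and the construction forces the degree of the numerator and denominator of $r$ to match those of $B$, confirming that no extra polynomial factor survives in the quotient $g$. The hypothesis that $r$ is nonconstant guarantees $m\ge 1$, so the product is genuinely nontrivial. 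The Phragm\'en--Lindel\"of step, rather than the naive maximum principle, is what makes the argument rigorous on the unbounded domain $\mathbb{C}_+$.
\end{proof}
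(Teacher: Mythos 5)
Your proposal is correct and takes essentially the same route as the paper's own proof: both construct the finite Blaschke product $B$ from the zeros of $r$ in $\mathbb{C}_+$, note that $r/B$ and $B/r$ are holomorphic, bounded on $\mathbb{C}_+$, and of modulus $1$ on $i\mathbb{R}$, and then invoke the maximum principle (your Phragm\'en--Lindel\"of step is just the rigorous form of that principle on the unbounded half-plane, made easy here because the rational functions have limits at infinity) to conclude that $r/B$ is a unimodular constant. The only difference is presentational: the paper dispatches the behavior at infinity in one line by observing that $|r|=1$ on $i\mathbb{R}$ forces $\deg P=\deg Q$ and hence $\lim_{|z|\to\infty}|r(z)|=1$, whereas you spread the same bookkeeping over the discussion of poles, zeros, and degrees.
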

\begin{proof}
Indeed, let $|r|=1$ on $i\mathbb R$, and $r=P/Q$, where $P$ and $Q$ are
polynomials. Then ${\mathrm{deg}}\, P={\mathrm{deg}}\, Q$, and therefore
$\lim _{|z|\to \infty}|r(z)|=1$. Let 
\[
B_{m}(z):=\prod _{j=1}^{m} \frac{z_{j}-z}{\overline{z}_{j}+z}, \qquad z \in \mathbb C_+,
\]
be the Blaschke product
in $\mathbb C_{+}$ whose zeros $(z_j)_{j=1}^m$ are the same as the zeros of $r$ in
$\mathbb C_{+}$, counting multiplicities. Observe that both,
$r/ B_{m}$ and $B_{m}/r$, are holomorphic and bounded in
$\mathbb C_{+}$, and have moduli identically equal to $1$ on
$i\mathbb R$. From the maximum principle it follows that $r/B_{m}$ is a
constant in $\mathbb C_{+}$ (and then in $\mathbb C$) of modulus $1$.
\end{proof}
\noindent
The direct argument above  
can essentially be found e.g. in \cite[Chapter 1.2, p. 6]{Garnett},
and we gave it here in view of its importance for the sequel.

Thus, if a rational function $r$ is $\mathcal A$-stable and different from a constant,
then either $|r| \not \equiv 1$ on $i\R,$ or,
otherwise $r$ is a finite Blaschke product.
We will split the study of powers of $r$ into considering these two cases.
First, we consider the situation when $r$ is a Blaschke product.
This assumption will be replaced by a more general assumption on $r$
to have at least one zero in $\C_+.$
The arguments are essentially the same under both assumptions,
while the technical details become somewhat clearer in a more general set-up. 
We will need a family of auxiliary functions.
 For $n \in \mathbb N$ and $s \ge 0,$ define
\[
F_{n,s}(x,t):=\frac{((x-1)^2+t)^{n-1}}{((x+1)^2+t)^{n+1+s}}, \qquad x,t>0,
\]
and denote
\[
a_{n,s}:=\frac{(2n+s)-2\sqrt{(n+1+s)(n-1)}}{(2+s)}
\]
and
\[
b_{n,s}:=a_{n,s}^{-1}
=\frac{(2n+s)+2\sqrt{(n+1+s)(n-1)}}{(2+s)}.
\]
Observe that
\begin{equation}\label{estb}
1\le \frac{2n+s}{2+s}\le
b_{n,s}\le \frac{2(2n+s)}{2+s}\le 2n.
\end{equation}

The next elementary technical lemma will simplify 
our estimates of powers of rational functions
having at least one zero in $\C_+.$
\begin{lemma}\label{L11}
For $n \in \mathbb N$ and $s \ge 0,$ let
\begin{equation}\label{gns}
G_{n,s}(x):=\sup_{t>0}\,F_{n,s}(x,t),\qquad x>0.
\end{equation}
Then
\begin{equation}\label{gns1}
G_{n,s}(x)=
\begin{cases}
\frac{(x-1)^{2(n-1)}}{(x+1)^{2(n+1+s)}},& \qquad
x\not\in[a_{n,s},b_{n,s}],\\
\frac{(n-1)^{n-1}}{(n+1+s)^{n+1+s}}
\cdot \frac{(2+s)^{2+s}}{4^{2+s}x^{2+s}},&
\qquad x \in[a_{n,s},b_{n,s}],
\end{cases}
\end{equation}
for all $n \in \mathbb N$ and $s \ge 0.$
\end{lemma}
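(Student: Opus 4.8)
The plan is to treat the problem as a one–variable calculus optimisation in $t$ for each fixed $x>0$. Writing $A:=(x-1)^2$ and $B:=(x+1)^2$, so that $B-A=4x$, the quantity to be maximised becomes
\[
g(t):=\frac{(A+t)^{n-1}}{(B+t)^{n+1+s}}, \qquad t>0,
\]
which is positive, continuous on $[0,\infty)$, and satisfies $g(t)=\mbox{O}(t^{-(2+s)})\to 0$ as $t\to\infty$. First I would differentiate the logarithm,
\[
\frac{g'(t)}{g(t)}=\frac{n-1}{A+t}-\frac{n+1+s}{B+t},
\]
and observe that clearing denominators yields a \emph{linear} equation in $t$, hence a unique stationary point
\[
t^*=\frac{(n-1)B-(n+1+s)A}{2+s}.
\]
Since the logarithmic derivative is strictly positive for small $t$ whenever $t^*>0$ and is asymptotically $-(2+s)/t<0$ as $t\to\infty$, the point $t^*$ (when positive) is the unique maximiser; if instead $t^*\le 0$, then $g$ is decreasing on $(0,\infty)$ and $\sup_{t>0}g(t)=g(0^+)=A^{n-1}/B^{n+1+s}$, which is the first line of \eqref{gns1}.

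Next I would translate the sign condition $t^*>0$, i.e. $(n-1)B>(n+1+s)A$, into a condition on $x$. Expanding $(n-1)(x+1)^2>(n+1+s)(x-1)^2$ and collecting terms produces the quadratic inequality
\[
(2+s)x^2-2(2n+s)x+(2+s)<0.
\]
Its discriminant factors neatly as $(2n+s)^2-(2+s)^2=4(n-1)(n+1+s)$, so the two roots are exactly $a_{n,s}$ and $b_{n,s}$. Hence $t^*>0$ precisely when $x\in(a_{n,s},b_{n,s})$, and $t^*\le 0$ precisely when $x\notin(a_{n,s},b_{n,s})$, which already aligns the two regimes of the maximisation with the two cases in \eqref{gns1}.

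It remains to evaluate $g$ at $t^*$ on $[a_{n,s},b_{n,s}]$. Here the key simplification is that substituting $t^*$ collapses the shifted numerator and denominator to
\[
A+t^*=\frac{(n-1)(B-A)}{2+s}=\frac{4(n-1)x}{2+s},\qquad
B+t^*=\frac{(n+1+s)(B-A)}{2+s}=\frac{4(n+1+s)x}{2+s},
\]
using $(2+s)-(n+1+s)=-(n-1)$ and $(2+s)+(n-1)=n+1+s$ together with $B-A=4x$. Plugging these into $g$ and cancelling the common factors $4x$ and $2+s$ yields the second line of \eqref{gns1}. I expect no genuine obstacle here: the whole argument is elementary, and the only real work is the bookkeeping in the two algebraic simplifications — identifying the roots of the quadratic with $a_{n,s},b_{n,s}$ and reducing $g(t^*)$ to closed form. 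Finally I would record that the two expressions in \eqref{gns1} agree at the endpoints $x=a_{n,s},b_{n,s}$ (where $t^*=0$), so that $G_{n,s}$ is continuous there, and that the degenerate case $n=1$ — in which the interval $[a_{n,s},b_{n,s}]$ shrinks to the single point $\{1\}$ and $g$ reduces to $(B+t)^{-(2+s)}$ — is consistent with both formulas.
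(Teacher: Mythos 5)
Your proposal is correct and follows essentially the same route as the paper: differentiating $F_{n,s}(x,\cdot)$ in $t$, noting the critical point is the root of a linear equation in $t$ (your $t^*$ is exactly the paper's $t_{n,s}(x)=w_{n,s}(x)/(2+s)$), identifying the positivity of that root with the quadratic inequality whose roots are $a_{n,s}$ and $b_{n,s}$, and evaluating at $t=0$ or $t=t^*$ accordingly. Your closed-form simplification via $A+t^*=\tfrac{4(n-1)x}{2+s}$, $B+t^*=\tfrac{4(n+1+s)x}{2+s}$ matches the paper's computation, so nothing further is needed.
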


\begin{proof}
If $x>0, s \ge 0,$ and $n \in \mathbb N$ are fixed, then 
\[
\frac{((x+1)^2+t)^{n+2+s}}{(x-1)^2+t)^{n-2}}
\frac{d}{dt}F_{n,s}(x,t)
=-(2+s)t+w_{n,s}(x),
\]
where
\[
w_{n,s}(x)
=-(2+s)x^2+2(2n+s)x-(2+s).
\]
Since the roots of $w_{n,s}(x)=0$ are precisely $a_{n,s}$ and $b_{n,s}$,
we have
$w_{n,s}(x)>0$ if and only if $x\in (a_{n,s},b_{n,s}).$
So, if $x\not\in [a_{n,s},b_{n,s}]$, then
\[
G_{n,s}(x)=
F_{n,s}(x,0)=
\frac{(x-1)^{2(n-1)}}{(x+1)^{2(n+1+s)}}.
\]
On the other hand,
if $x\in[a_{n,s},b_{n,s}]$ and $t_{n,s}(x)={w_{n,s}(x)}(2+s)^{-1},$ then
\[
G_{n,s}(x)=
F_{n,s}(x,t_{n,s}(x))=\frac{(n-1)^{n-1}}{(n+1+s)^{n+1+s}}
\cdot \frac{(2+s)^{2+s}}{4^{2+s}x^{2+s}}. \qedhere
\]
\end{proof}

Recall that if $v$ is the Cayley transform, i.e.
$v(z)=\frac{1-z}{1+z},$
then by \cite[Lemma 3.7]{BaGoTo} (and its proof),
\begin{equation}\label{CJFA1}
\|v^n\|_{\mathcal{B}_0}\le 2+2\log(2n),\qquad n\in \N.
\end{equation}
The logarithmic bound in \eqref{CJFA1} is optimal by \cite[Lemma 5.1]{BaGoTo1}.
For a rational function $r$ with at least one zero in $\C_+,$ our estimates 
will be reduced to the particular case when $r=v.$ For $s \ge 0$ let
\begin{equation}\label{defeta}
\eta_s(z):=(1+z)^{-s}, \qquad z \in \C_+.
\end{equation}
\begin{lemma}\label{StabLP}
Let $r$ be an $\mathcal A$-stable rational function on $\mathbb C_+$
satisfying $r(\lambda)=0$ for some $\lambda \in \C_+,$
and let $s>0.$
Then  there exist $C=C(r)>0$ and $C_1=C_1(r,s)>0$ such that
\begin{equation}\label{Cgen1}
\|r^{n}\|_{\mathcal{B}_0}\le C (1+\log(n))
\end{equation}
and 
\begin{equation}\label{FolP1}
\|r^{n} \eta_s\|_{\mathcal{B}_0}\le C_1(r,s)
\end{equation}
for all $n \in \N.$ 
\end{lemma}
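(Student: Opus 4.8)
The plan is to reduce both estimates to the behaviour of a single Blaschke factor, and thereby to the functions $F_{n,s}, G_{n,s}$ of Lemma \ref{L11}. Write $\lambda=\alpha+i\beta$ with $\alpha>0$, and let $B_\lambda(z):=\frac{\lambda-z}{\overline{\lambda}+z}$ be the Blaschke factor vanishing at $\lambda$. Since $r$ is $\mathcal A$-stable and $r(\lambda)=0$, the quotient $r/B_\lambda$ is holomorphic and bounded by $1$ on $\C_+$ (its modulus equals $|r|\le 1$ on $i\R$, and the maximum principle applies), so one has the subordination $|r(z)|\le|B_\lambda(z)|$ on $\C_+$. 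Moreover, as $r$ is rational and $\mathcal A$-stable, $r'$ is holomorphic on $\overline{\C_+}$ with $r'(z)=\mathrm{O}(|z|^{-2})$ as $|z|\to\infty$; hence $K_r:=\sup_{z\in\overline{\C_+}}|r'(z)|\,|\overline{\lambda}+z|^2<\infty$, and, using $|B_\lambda'(z)|=2\alpha|\overline{\lambda}+z|^{-2}$, this yields both $|r'(z)|\le K_r|\overline{\lambda}+z|^{-2}$ and the pointwise bound $|r'(z)|\le\frac{K_r}{2\alpha}|B_\lambda'(z)|$. Finally, since $-1$ and $-\overline{\lambda}$ both lie in $\C_-$, the moduli $|1+z|$ and $|\overline{\lambda}+z|$ are comparable on $\overline{\C_+}$ up to constants depending only on $\lambda$, so $\eta_s$ may be replaced by $|\overline{\lambda}+z|^{-s}$ at the cost of a constant $c_s=c_s(r)$.

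The function $G_{n,s}$ enters because, after the substitution $t=(y-\beta)^2$ followed by the scaling $\xi=x/\alpha$, $\tau=t/\alpha^2$, one checks that $\frac{|B_\lambda(z)|^{n-1}}{|\overline{\lambda}+z|^{s+2}}=\alpha^{-(2+s)}\sqrt{F_{n,s}(x/\alpha,(y-\beta)^2/\alpha^2)}$; hence by \eqref{gns} and the change of variables $\xi=x/\alpha$,
\[
\int_0^\infty\sup_{y\in\R}\frac{|B_\lambda(x+iy)|^{n-1}}{|\overline{\lambda}+x+iy|^{s+2}}\,dx=\alpha^{-(1+s)}\int_0^\infty\sqrt{G_{n,s}(\xi)}\,d\xi.
\]
For \eqref{Cgen1} I would apply this with $s=0$: subordination and the derivative bound give $|(r^n)'(z)|=n|r(z)|^{n-1}|r'(z)|\le\frac{K_r}{2\alpha}\,n|B_\lambda(z)|^{n-1}|B_\lambda'(z)|=\frac{K_r}{2\alpha}|(B_\lambda^n)'(z)|$ pointwise, whence $\|r^n\|_{\mathcal B_0}\le\frac{K_r}{2\alpha}\|B_\lambda^n\|_{\mathcal B_0}$. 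The displayed identity (with $s=0$) shows $\|B_\lambda^n\|_{\mathcal B_0}=2n\int_0^\infty\sqrt{G_{n,0}(\xi)}\,d\xi$, which is independent of $\lambda$ and so equals $\|v^n\|_{\mathcal B_0}$; by \eqref{CJFA1} the latter is at most $2+2\log(2n)$, giving \eqref{Cgen1}.

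For \eqref{FolP1} I would estimate $r^n\eta_s$ by the product rule, $\sup_y|(r^n\eta_s)'|\le\sup_y\bigl(n|r|^{n-1}|r'|\,|\eta_s|\bigr)+\sup_y\bigl(|r|^n|\eta_s'|\bigr)$, and treat the two pieces separately. The second piece carries no factor $n$: from $|r|\le1$ and $|\eta_s'(z)|=s|1+z|^{-s-1}$ one gets $\sup_y|r|^n|\eta_s'|\le s\,\sup_y|1+z|^{-s-1}=s(x+1)^{-s-1}$, which is integrable on $\R_+$ uniformly in $n$. The first piece, after inserting $|r|^{n-1}\le|B_\lambda|^{n-1}$, $|r'(z)|\le K_r|\overline{\lambda}+z|^{-2}$ and $|\eta_s(z)|\le c_s|\overline{\lambda}+z|^{-s}$, is dominated by $K_rc_s\,n\frac{|B_\lambda|^{n-1}}{|\overline{\lambda}+z|^{s+2}}$, and by the displayed identity its $x$-integral equals $K_rc_s\,\alpha^{-(1+s)}\,n\int_0^\infty\sqrt{G_{n,s}(\xi)}\,d\xi$. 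Thus \eqref{FolP1} follows once one shows, uniformly in $n$, that
\[
n\int_0^\infty\sqrt{G_{n,s}(\xi)}\,d\xi\le C(s),\qquad s>0.
\]

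This last bound is the main obstacle, and it is exactly where Lemma \ref{L11} and the estimates \eqref{estb} are used. I would split the integral at $a_{n,s}$ and $b_{n,s}$. On the middle interval, where $G_{n,s}(\xi)=\frac{(n-1)^{n-1}}{(n+1+s)^{n+1+s}}\frac{(2+s)^{2+s}}{4^{2+s}\xi^{2+s}}$, elementary asymptotics give $\frac{(n-1)^{(n-1)/2}}{(n+1+s)^{(n+1+s)/2}}\asymp n^{-(2+s)/2}$, while $\int_{a_{n,s}}^{b_{n,s}}\xi^{-(2+s)/2}\,d\xi\asymp a_{n,s}^{-s/2}\asymp n^{s/2}$ by \eqref{estb} and $a_{n,s}=b_{n,s}^{-1}$; multiplying by $n$ the powers $n^{1}\cdot n^{-(2+s)/2}\cdot n^{s/2}=n^{0}$ cancel, leaving a bound depending only on $s$. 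On the tails $[0,a_{n,s}]$ and $[b_{n,s},\infty)$, where $\sqrt{G_{n,s}(\xi)}=\frac{|\xi-1|^{n-1}}{(\xi+1)^{n+1+s}}$, the integrand is bounded by $e^{-(2n+s)\xi}$ near $0$ and by $\xi^{-(2+s)}$ for large $\xi$, so the respective contributions are $\mathrm{O}(1)$ and $\mathrm{O}(n^{-s})$. The delicate point is precisely the cancellation of the $n$-powers on the middle interval; note that for $s=0$ it fails and the same integral produces the $\log n$ of \eqref{Cgen1}, consistent with the optimality of \eqref{CJFA1}.
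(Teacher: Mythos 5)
Your proposal is correct and follows essentially the same route as the paper's proof: factor out the Blaschke factor at $\lambda$ (your subordination $|r|\le|B_\lambda|$ together with $|r'(z)|\le K_r|\overline{\lambda}+z|^{-2}$ is the paper's factorization $r=B_\lambda r_0$ with $\|r_0\|_{{\rm H}^\infty(\C_+)}\le 1$ and \eqref{lambda1}), reduce \eqref{Cgen1} to $\|v^n\|_{\mathcal B_0}$ via \eqref{CJFA1}, and for \eqref{FolP1} split off the $\eta_s'$ term as in \eqref{Jeps} and bound $n\int_0^\infty (G_{n,s}(\xi))^{1/2}d\xi$ uniformly in $n$ by splitting at $a_{n,s}$ and $b_{n,s}$ exactly as in the paper's estimate of $I_{n,s}$. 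The only blemish is the claimed pointwise bound $e^{-(2n+s)\xi}$ on $[0,a_{n,s}]$, which is false as stated (e.g. $n=1$, $s=0$, $\xi=1$) but harmless, since $\bigl(\tfrac{1-\xi}{1+\xi}\bigr)^{n-1}\le e^{-2(n-1)\xi}$ already yields the required ${\rm O}(1)$ contribution.
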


\begin{proof}
Let $n \in \mathbb N$ be fixed. First observe that
since $r(\lambda)=0,$ 
the $\mathcal A$-stability of $r$ and the maximum principle imply that
\begin{equation}\label{lambda}
r(z)=\left(\frac{\lambda-z}{\overline{\lambda}+z}\right)r_0(z),\qquad z\in \C_{+}, 
\end{equation}
where a rational function $r_0$ satisfy $\|r_0\|_{{\rm H}^\infty(\C_{+})}\le 1.$ Clearly,
$r_0=r(\infty)+ P/Q,$ where $P$ and $Q$ are polynomials
with
${\rm deg} \, P < {\rm deg}\, Q,$
and $r(\infty)=\lim_{z \to \infty}r(z).$
So, by a partial fraction expansion, there exists $C=C(r_0)>0$ such that 
\begin{equation}\label{lambda1}
|r'_0(z)|\le \frac{C}{|\overline{\lambda}+z|^2}, \qquad z\in \C_{+}.
\end{equation}

Hence, using (\ref{lambda}), (\ref{lambda1}) and $\|r_0\|_{{\rm H}^\infty(\C_+)}\le 1,$
\begin{equation}\label{wn}
|(r^n(z))'|\le n \frac{|\lambda-z|^{n-1}}
{|\overline{\lambda}+z|^{n-1}}
\frac{C + 2{\rm Re}\lambda}{|\overline{\lambda}+z|^2},\qquad z\in \C_{+}.
\end{equation}

Denoting $C_\lambda=C+2{\rm Re}\,\lambda,$
$\lambda=a+ib$, and $z=x+iy,$ 
we infer from \eqref{wn} that
\begin{equation}\label{CJFA}
\begin{aligned}
\|r^n\|_{\mathcal{B}_0}
\le & n C_\lambda \int_0^\infty \sup_{y\in \R} \,
\frac{((x-a)^2+(y+b)^2)^{(n-1)/2}}
{((x+a)^2+(y+b)^2)^{(n+1)/2}}\,dx\\
=& n C_\lambda \int_0^\infty \sup_{y\in \R} \,
\frac{((x-1)^2+y^2)^{(n-1)/2}}
{((x+1)^2+y^2)^{(n+1)/2}}\,dx%\notag
 \\
=&
\frac{C_\lambda}{2}\,\|v^n\|_{\mathcal{B}_0}.%\notag
\end{aligned}
\end{equation}
Hence (\ref{CJFA}) and  \eqref{CJFA1} yield \eqref{Cgen1}.

Next, let $s >0$ be fixed. By (\ref{wn}) and the $\mathcal A$-stability of $r$, 
\begin{equation}\label{derivat}
\begin{aligned}
|(r^n(z)\eta_s(z))'|\le&  \frac{|(r^n(z))'|}{|1+z|^s}+
s \frac{|r^n(z)|}{|1+z|^{s+1}}\\
\le& n \frac{|\lambda-z|^{n-1}}
{|\overline{\lambda}+z|^{n-1}}
\frac{C_\lambda}{|\overline{\lambda}+z|^{2+s}}+
\frac{s}{|1+z|^{s+1}}, \qquad z \in \mathbb C_+.\notag
\end{aligned}
\end{equation}
Since
\begin{equation}\label{Jeps}
\int_0^\infty \sup_{y>0}\,\frac{s}{((x+1)^2+y^2)^{(s+1)/2}}
\,dx=\int_0^\infty \frac{s\,dx}{(1+x)^{s+1}}=1,
\end{equation}
we have
\[
\|r^n\eta_s\|_{\mathcal{B}_0}\le  C_\lambda  I_{n,s}+1,
\]
where
%, in view of Lemma \ref{L11},
\begin{align*}
I_{n,s} =& n \int_0^\infty \sup_{y\in \R} \,
\frac{((x-a)^2+(y+b)^2)^{(n-1)/2}}
{((x+a)^2+(y+b)^2)^{(n+1+s)/2}}\,dx\\
=& n\int_0^\infty \sup_{y\in \R} \,
\frac{((x-a)^2+y^2)^{(n-1)/2}}
{((x+a)^2+y^2)^{(n+1+s)/2}}\,dx\\
=&n\int_0^\infty (G_{n,s}(x))^{1/2}\,dx
\end{align*}
and $G_{n,s}$ is given by \eqref{gns}.

Thus, by \eqref{gns1},
\begin{align*}
I_{n,s}
=&n\int_0^{a_{n,s}}\frac{(1-x)^{n-1}}{(x+1)^{n+1}}\,dx
+n\int_{b_{n,s}}^\infty\frac{(x-1)^{n-1}}{(x+1)^{n+1}}\,dx\\
+&n\frac{(n-1)^{(n-1)/2}}{(n+1+s)^{(n+1+s)/2}}\frac{(2+s)^{1+s/2}}
{2^{2+s}}
\int_{a_{n,s}}^{b_{n,s}}
\frac{dx}{x^{1+s/2}}.
\end{align*}

Recalling that $a_{n,s}=b^{-1}_{n, s},$ observe that
\[
n\int_0^{a_{n,s}}\frac{(1-x)^{n-1}}{(x+1)^{n+1}}\,dx
+n\int_{b_{n,s}}^\infty\frac{(x-1)^{n-1}}{(x+1)^{n+1}}\,dx=
2\int_{b_{n,s}}^\infty d\left(\frac{(x-1)^{n}}{(x+1)^{n}}\right)\le 2.
\]
Therefore, taking into account (\ref{estb}), 
\begin{align*}
I_{n,s} \le 2+\frac{2}{n^{s/2}}\frac{(2+s)^{1+s/2}}
{2^{2+s}s}b_{n,s}^{s/2}
\le 2+\frac{(1+s/2)^{s/2+1}}{s},
\end{align*}
and then
\begin{equation}\label{FolP}
\|r^n \eta_s\|_{\mathcal{B}_0}\le
\left(2+ \frac{(1+s/2)^{s/2+1}}{s} \right)C_\lambda+1.
\end{equation}
Thus, \eqref{FolP1} holds with $C(r,s)$ given by the right-hand side of \eqref{FolP}. $\qedhere$
\end{proof}

Now we turn to the case when $|r|$ is not identically $1$ on $i\R$ (but $r$ may not have zeros in $\C_+$).
As above, we will first need a technical lemma facilitating
our estimates of $r^n, n \in \mathbb N,$ under this assumption.

For $\alpha>0,$  $s\ge 0,$ and $n \in \mathbb N$ define 
\[
S_{\alpha, n,s}(x,t):=\frac{e^{-\alpha n x/(x^2+t)}}{(x^2+t)^{1+s/2}}, \qquad x, t>0.
\]
\begin{lemma}\label{S11}
For all  $\alpha, \beta>0$ and $s\ge 0$ there are 
$C=C(\alpha,\beta,s)>0$
such that for every $n\in \N,$
\begin{equation}\label{S111}
\int_{\beta/n}^\infty \sup_{t>0}\,S_{\alpha, n, s}(x,t)\,dx\le 
\begin{cases} C\frac{\log(n+1)}{n},& \,\, \text{if}\,\, s=0,\\
\frac{C}{n},& \,\, \text{if}\,\, s>0.
\end{cases}
\end{equation}
\end{lemma}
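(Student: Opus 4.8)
The plan is to evaluate the inner supremum over $t$ in closed form and then integrate the resulting one-variable profile. First I would note that $S_{\alpha,n,s}(x,t)$ depends on $t$ only through $u:=x^2+t$, so for fixed $x>0$ the task reduces to maximising $g(u):=e^{-\alpha n x/u}u^{-(1+s/2)}$ over the admissible range $u>x^2$. Logarithmic differentiation gives
\[
\frac{g'(u)}{g(u)}=\frac{\alpha n x-(1+s/2)u}{u^2},
\]
so $g$ increases and then decreases, with a single interior critical point at $u^\ast=\alpha n x/(1+s/2)$. Comparing $u^\ast$ with the left endpoint $x^2$ produces the threshold $x_0:=\alpha n/(1+s/2)$: for $0<x<x_0$ the maximiser $u^\ast$ lies in $(x^2,\infty)$ and a direct substitution yields
\[
\sup_{t>0}S_{\alpha,n,s}(x,t)=c_{\alpha,s}\,(nx)^{-(1+s/2)},\qquad c_{\alpha,s}:=e^{-(1+s/2)}\Big(\tfrac{1+s/2}{\alpha}\Big)^{1+s/2},
\]
whereas for $x\ge x_0$ the map $g$ is decreasing on $(x^2,\infty)$, so the supremum equals the boundary value $e^{-\alpha n/x}x^{-(2+s)}$.

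With this closed form in hand, and assuming first that $n$ is large enough that $\beta/n\le x_0$, I would split $\int_{\beta/n}^\infty=\int_{\beta/n}^{x_0}+\int_{x_0}^\infty$ and estimate each piece separately. On the near region the integrand equals $c_{\alpha,s}\,n^{-(1+s/2)}x^{-(1+s/2)}$; for $s>0$ the antiderivative $x^{-s/2}$ evaluated at the lower endpoint $\beta/n$ dominates and produces a term of size $n^{-(1+s/2)}(\beta/n)^{-s/2}=O(1/n)$, while for $s=0$ the integral is $c_{\alpha,0}\,n^{-1}\log(x_0 n/\beta)=O(\log(n+1)/n)$, which is exactly the source of the logarithm in the case $s=0$. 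On the far region I would simply bound $e^{-\alpha n/x}\le 1$ and compute $\int_{x_0}^\infty x^{-(2+s)}\,dx=O(x_0^{-(1+s)})=O(n^{-(1+s)})=O(1/n)$. Adding the two contributions gives $O(\log(n+1)/n)$ when $s=0$ and $O(1/n)$ when $s>0$, as claimed.

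The hard part will be the regime $\beta/n>x_0$, i.e. $n^2<\beta(1+s/2)/\alpha$, in which the near region is empty and the crude bound $e^{-\alpha n/x}\le 1$ on a far integral starting at the small value $\beta/n$ is far too lossy (it would give $O(n^{1+s})$). The saving observation is that this regime occurs only for finitely many $n$, namely those below a threshold depending only on $\alpha,\beta,s$; and for each such fixed $n$ the quantity $\int_{\beta/n}^\infty\sup_{t>0}S_{\alpha,n,s}(x,t)\,dx$ is plainly finite, since the profile decays like $x^{-(1+s/2)}$ near the positive lower limit and like $x^{-(2+s)}$ at infinity. Enlarging the constant $C=C(\alpha,\beta,s)$ to absorb these finitely many values then upgrades the asymptotic estimate to a bound valid uniformly for all $n\in\mathbb N$.
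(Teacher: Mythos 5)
Your proof is correct and follows essentially the same route as the paper: compute $\sup_{t>0}S_{\alpha,n,s}(x,t)$ explicitly via one-variable calculus (critical point at $x^2+t=\alpha nx/(1+s/2)$, boundary value beyond the threshold $x=\alpha n/(1+s/2)$), then split the integral at that threshold and estimate the two pieces, obtaining the logarithm precisely from the near-region integral when $s=0$. Your explicit treatment of the finitely many small $n$ (where $\beta/n$ exceeds the threshold) is a careful touch that the paper compresses into the phrase ``for large enough $n$'' together with the $n$-dependence absorbed into $C(\alpha,\beta,s)$.
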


\begin{proof}
Let $\alpha, \beta >0$ and $s\ge 0$ be fixed. Fix also $n \in \mathbb N$ and $x>0.$ Then for all $t >0,$
\[
e^{-\alpha n x/(x^2+t)}(x^2+t)^{3+s/2}\frac{d}{dt}S_{\alpha, n,s}(x, t)
=an x-(1+s/2)(x^2+t),
\]
and $\frac{d}{dt}S_{\alpha, n,s}(x,t)$ has the unique zero at  $t_0=\alpha_snx-x^2,$
where we denoted $\alpha_s=\frac{\alpha}{1+s/2}$ for shorthand. 
Therefore,
\[
\sup_{t>0}\,S_{\alpha, n,s}(x,t)=S_{\alpha, n,s}(x,t_0)=\frac{e^{-(2+s)/2}}{(\alpha_s nx)^{1+s/2}},\qquad \text{if}\,\,  x\in (0,\alpha_s n),
\]
and
\[
\sup_{t>0}\,S_{\alpha, n,s}(x,t)=S_{n,s}(x,0)
=\frac{e^{-\alpha n/x}}{x^{2+s}},\qquad \text{if} \,\, x \in (\alpha_s n, \infty).
\]
Then for large enough $n,$
we have
\[
\int_{\beta/n}^\infty \sup_{t>0}\,S_{\alpha, n,0}(x,t)\,dx\le \frac{1}{\alpha_s n}\int_{\beta/n}^{\alpha n}\frac{dx}{x}+
\int_{\alpha n}^\infty \frac{dx}{x^2}\le C\frac{\log(n+1)}{n}
\]
and, if $s>0$,
\[
\int_{\beta/n}^\infty \sup_{t>0}\,S_{\alpha, n,s}(x,t)\,dx\le \frac{1}{(\alpha_sn)^{1+s/2}}
\int_{\beta/n}^{\alpha_sn}\frac{dx}{x^{1+s/2}}
+\int_{\alpha_sn}^\infty \frac{dx}{x^{2+s}}\le \frac{C}{n},
\]
for appropriate constants $C=C(\alpha,\beta,s) >0.$
This finishes the proof.
\end{proof}

The next lemma 
is a counterpart of Lemma \ref{StabLP} 
under complementary  assumptions on $r$.
For $R>0$ denote
\[
\mathbb D_R^{+}:=\{z\in \C_{+}:\,|z|< R\} \qquad \text{and}\qquad \mathbb D_R:=\{z \in \mathbb C: |z|<R\}.
\]
\begin{lemma}\label{NC1}
Let $r$ be an $\mathcal A$-stable rational function
satisfying $|r|\not \equiv 1$ on $i\R,$
and let $s >0.$
Then  
there exist $C=C(r)>0$  and $C_1=C_1(r,s)$ such that
\eqref{Cgen1} and \eqref{FolP1} hold
for all $n \in \N.$
\end{lemma}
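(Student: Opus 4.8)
The plan is to reduce, exactly as in the proof of Lemma \ref{StabLP}, the estimates \eqref{Cgen1} and \eqref{FolP1} to bounding $\int_0^\infty \sup_{y\in\R}|(r^n)'(x+iy)|\,dx$ and $\int_0^\infty\sup_{y\in\R}|(r^n\eta_s)'(x+iy)|\,dx$, and to control the integrands through $|(r^n)'|=n|r|^{n-1}|r'|$ and $|(r^n\eta_s)'|\le n|r|^{n-1}|r'||\eta_s|+|r|^n|\eta_s'|$. Since the poles of $r$ lie in the open left half-plane, $r$ is holomorphic and bounded on $\overline{\C_+}\cup\{\infty\}$ with $r'=\mbox{O}(|z|^{-2})$ at infinity, so $|r'(z)|\le C_r(1+|z|^2)^{-1}$ on $\C_+$; the term $|r|^n|\eta_s'|$ is harmless by \eqref{Jeps}. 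Thus everything hinges on the decay of $|r|$.

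The first, and main, step is to quantify this decay. Because $|r|\not\equiv1$ on $i\R$ and $r$ is rational, $t\mapsto|r(it)|^2$ is rational and not identically $1$, so the set $\{t:|r(it)|=1\}$ is finite; together with the point at infinity this gives a finite critical set on $\partial\C_+\cup\{\infty\}$. I would then use that $-\log|r|$ is a nonnegative superharmonic function on $\C_+$ whose boundary trace $-\log|r(it)|\ge0$ is not identically zero; its Poisson integral is therefore a nonzero harmonic minorant, and a short computation with the Poisson kernel of $\C_+$ yields $c=c(r)>0$ and $R_0=R_0(r)$ with
\[
|r(z)|\le\exp\bigl(-c\,{\rm Re}(1/z)\bigr)=\exp\Bigl(-c\,\frac{x}{x^2+y^2}\Bigr),\qquad |z|\ge R_0,
\]
which is precisely the exponential profile appearing in $S_{\alpha,n,s}$ with $t=y^2$. (Zeros of $r$ in $\C_+$ only add positive mass to $-\log|r|$, so the bound is unaffected and no Blaschke factorization is needed.) Near a \emph{finite} critical point $it_0$ the situation is simpler: since $it_0$ is isolated, $u:=-\log|r|$ is harmonic, nonnegative and vanishing at $it_0$ while strictly positive on the punctured boundary, so Hopf's boundary-point lemma gives ${\rm Re}\,(r'/r)(it_0)<0$, whence a local bound $|r(z)|\le e^{-\alpha_0 x}$ with $\alpha_0=\alpha_0(t_0)>0$.

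With these bounds I would split the $x$-integral into three ranges. On $\{|z|\ge R_0\}$ I insert the exponential profile together with $|r'(z)|\le C_r|z|^{-2}$ and $|\eta_s(z)|\le 2^s|z|^{-s}$, majorizing the integrand by $C n\,S_{c',n,s}(x,y^2)$; taking $\sup_y$ and splitting at $\beta/n$, the part $x\ge\beta/n$ is controlled by Lemma \ref{S11} (giving $C\log(n+1)$ when $s=0$ and $C$ when $s>0$, after the factor $n$), while the sliver $0<x<\beta/n$ is estimated crudely from $|r|\le1$, $\sup_y|r'(x+iy)|\le C_r$ and $\sup_y|\eta_s(x+iy)|\le1$, contributing $\mbox{O}(1)$. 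On a neighborhood of each finite critical point the bound $|r|\le e^{-\alpha_0 x}$ gives $\int_0^{\rho}n\,e^{-\alpha_0(n-1)x}\,dx=\mbox{O}(1)$. Finally, on the remaining bounded region $\{|z|\le R_0\}$ away from the critical points one has $|r|\le1-\delta$ uniformly, so this contribution is $\le Cn(1-\delta)^{n-1}=\mbox{o}(1)$. Summing, the worst term is the $C\log(n+1)$ coming from infinity, giving \eqref{Cgen1}, while for $s>0$ every term is $\mbox{O}(1)$, giving \eqref{FolP1}.

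The main obstacle is the first step: deriving the uniform profile $|r(z)|\le\exp(-c\,{\rm Re}(1/z))$ for large $|z|$ from the bare hypothesis $|r|\not\equiv1$, and recognizing that the decay at infinity is governed by the Poisson kernel centered at the origin, so that Lemma \ref{S11} applies verbatim, whereas the finite critical points contribute only through the weaker but amply sufficient linear-in-$x$ decay supplied by the boundary-point lemma. Once the geometry of the critical set is matched to the kernel $S_{\alpha,n,s}$, the remaining estimates are routine.
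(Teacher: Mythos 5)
Your overall reduction is the same as the paper's: everything hinges on pointwise exponential-decay bounds for $|r|$, after which the outer region is controlled by Lemma \ref{S11} exactly as you say, the $\eta_s'$ term by \eqref{Jeps}, and the remaining regions contribute $O(1)$. The difference lies in how the decay bounds are obtained. For the exterior region your Poisson-kernel argument (the nonzero harmonic minorant of $-\log|r|$) is correct, though it invokes canonical-factorization-type machinery; the paper gets the same bound \eqref{Ep12} elementarily, by applying the maximum principle to $e^{\alpha z}r(z)$ on the half-disc $\mathbb D_R^+$ --- with $R$ chosen, using finiteness of the critical set, so that $\sup_{z\in\C_+,|z|=R}|r(z)|<1$ --- and then replacing $r(z)$ by $r(1/z)$ to transfer the bound to $\C_+\setminus\mathbb D_R^+$. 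That multiplier trick simultaneously yields the interior bound \eqref{Ep11} on all of $\mathbb D_R^+$, so the paper needs no separate analysis of critical points at all.

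The genuine soft spot is your treatment of the finite critical points $it_0$. Hopf's boundary point lemma does give $\partial_x u(it_0)>0$ for $u=-\log|r|$, but a strict inequality for the normal derivative at the single point $it_0$ does not by itself yield the uniform bound $u(z)\ge \alpha_0 x$ on a neighbourhood: writing $u(z)=x\,\partial_x u(it_0)+O(|z-it_0|^2)$ (the tangential derivative vanishes since $t_0$ minimizes $u$ on the axis), the quadratic remainder can dominate $x$ in tangential directions, e.g.\ where $x\sim|y-t_0|^3$, so ``whence a local bound'' hides a real step. The conclusion is nevertheless true and fixable by standard means: either a harmonic-measure argument on a small half-disc centred at $it_0$ (one has $u\ge m\,\omega$, where $\omega$ is the harmonic measure of the semicircular arc, $m>0$ its minimum of $u$ there, and $\omega(z)\gtrsim x$ near the centre), or --- simplest --- the paper's own trick run locally: since the critical set is finite, choose the half-disc so that its semicircular boundary avoids critical points, whence $\sup|r|<1$ there, and apply the maximum principle to $e^{\alpha_0 z}r(z)$ on that half-disc. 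With this repair your proof is complete and gives the same bounds; as it stands, the paper's route is both shorter and entirely elementary.
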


\begin{proof}
Since  $|r|$ is not identically $1$ on $i\R,$ 
the set $\{z\in i\mathbb R: |r(z)|=1\}$ is at most finite.   
Therefore, taking into account that $r$ is $\mathcal A$-stable,  there exists  $R>0$ such that
$
|r(\pm iR)|<1.
$
Since, by the maximum principle,
$|r(z)|<1$ for all $ z \in \C_+,$  we 
have
\[
\omega:=\sup_{z \in \C_+, |z|=R}|r(z)| <1.
\]
If $\alpha=\frac{|\log\omega|}{R}$ and 
\[
f(z):=e^{\alpha z}r(z), \qquad z \in \overline{\C}_+, 
\]
then
$$
|f(z)|\le 1, \qquad z \in \partial \mathbb D_R^{+},
$$
and the maximum principle yields 
$|f(z)|\le 1$ for all $z\in \mathbb D_R^{+}.$
Thus
\begin{equation}\label{Ep11}
|r(z)|\le e^{- \alpha {\rm Re}\,z},\qquad z\in \mathbb D_R^{+}.
\end{equation}
By applying the argument above to $r^*(z):=r(1/z)$
in place of $r,$
we infer that
\begin{equation}\label{Ep12}
|r(z)|\le e^{-\alpha {\rm Re}\,(1/z)},\qquad z\in \C_{+}\setminus \mathbb D_{R}^{+}.
\end{equation}
Moreover, as in the proof of Lemma \ref{StabLP},
\begin{equation}\label{eqq}
|r'(z)|\le \frac{C}{|1+z|^2}, \qquad z \in \C_+,
\end{equation}
for some $C=C(r)>0.$

Thus, for every $n \in \N$,  
 the estimates (\ref{Ep11}), (\ref{Ep12}) and \eqref{eqq} combined with   the $\mathcal A$-stability of $r$ imply that
\begin{equation}\label{estimm}
|(r^n(z))'|\le\begin{cases}  C n e^{-\alpha (n-1){\rm Re}\,z},& \qquad z\in \mathbb D_{R}^{+},\\
 C n e^{-\alpha (n-1){\rm Re}\,(1/z)}|1+z|^{-2},& \qquad z\in \C_{+}\setminus \mathbb D_{R}^{+}.
\end{cases}
\end{equation}
We simultaneously obtain both bounds \eqref{Cgen1} and \eqref{FolP1} using the fact that $r^n=r^n\eta_0.$
Fix $s \ge 0$ and  $n \ge 2,$ and let $z=x+iy\in \C_{+}.$ Writing 
\begin{equation}\label{derivat1}
 (r^n\eta_s(z))'=\frac{(r^n(z))'}{(z+1)^s}
-s\frac{r^n(z)}{(z+1)^{s+1}},\qquad z \in \C_+,
\end{equation}
and employing \eqref{estimm}, 
 we have
\begin{equation*}
\begin{aligned}
\|r^n\eta_s\|_{\mathcal{B}_0}\le&\int_0^{\frac{R}{n}}\sup_{y\in \R}\,\frac{|(r^n(z))'|}{|1+z|^s}\,dx
+\int_{\frac{R}{n}}^\infty\sup_{|z|>R,\,y\in \R}\,\frac{|(r^n(z))'|}{|1+z|^s}\,dx\\
+&\int_{\frac{R}{n}}^{R}\sup_{|z|<R,\,y\in \R}\,\frac{|(r^n(z))'|}{|1+z|^s}\,dx
+s\int_0^\infty \sup_{y\in \R}\,\frac{|r^n(z)|}{|1+z|^{s+1}}\,dx\\
\le& C+Cn\int_{\frac{R}{n}}^\infty \sup_{t>0}\,S_{\alpha, n-1,s}(x,t)\,dx
+Cn\int_{\frac{R}{n}}^R\frac{e^{-\alpha (n-1)x}}{(x+1)^{2+s}}\,dx\\
+s&\int_0^\infty \frac{dx}{(x^2+1)^{(1+s)/2}}
\le C_1+Cn\int_{\frac{R}{n}}^\infty \sup_{t>0}\,S_{\alpha, n-1,s}(x,t)\,dx
\end{aligned}
\end{equation*}
for some $C_1=C_1(s)>0.$
The estimates 
(\ref{Cgen1}) and (\ref{FolP1}) follow from  (\ref{S111}).
\end{proof}

Observe that the estimate given by Lemma \ref{NC1} is optimal, as the following example shows.
\begin{exa}\label{SE}
Let $z=x+iy \in \overline{\C}_+,$ and consider the rational function
\[
r(z)=\frac{(z+2)^2}{(z+1)(z+4)}.
\]
Noting that
\[
r'(z)=\frac{z^2-4}{(z+1)^2(z+4)^2},
\]
by a simple but somewhat tedious calculation, we conclude that 
\[
|r(z)|\ge \frac{|z-1|}{|z+1|},\qquad
|r'(z)|\ge\frac{1}{4|z+1|^2}, \quad {\rm Re}\, z\ge 8.
\]
Hence, for all $n\ge 2,$ 
\[
|(r^n(z))'|\ge \frac{n}{4}\frac{|z-1|^{n-1}}{|z+1|^{n+1}}, \qquad {\rm Re}\, z\ge 8.
\]
so that, by Lemma \ref{StabLP}  with $s=0,$ 
\[
\|r^n\|_{\mathcal{B}_0}\ge \int_8^n\sup_{y\in\R}\,|(r^n(z))'|\,dx \ge
\frac{n(n-1)^{(n-1)/2}}{8(n+1)^{(n+1)/2}}\int_8^n\frac{dx}{x}
\ge C\log(n+1)
\]
for some $C>0.$

Moreover, for all $t\in \R,$
\[
|r(it)|^2=\frac{(t^2+4)^2}{(t^2+1)(t^2+16)}\le 1,
\]
and
\[
|r(it)|<1, \qquad t \neq 0,\quad r(0)=1, \quad \text{and} \quad r(\infty)=1.
\]
\end{exa}

Combining now Lemmas \ref{StabLP} and \ref{NC1},
we obtain \eqref{Cgen1} and \eqref{FolP1} for any $\mathcal A$-stable $r.$ 
\begin{cor}\label{MAC}
Let $r$ be an $\mathcal A$-stable rational function, and let $s>0$.
Then  
there exist $C=C(r)>0$  and $C_1=C_1(r,s)>0$ such that
\eqref{Cgen1} and \eqref{FolP1} hold for all $n \in \N.$ 
Moreover, if $r_n(z)=r^n(z/n),$ $z  \in \C_+,$
then 
\begin{equation}\label{FolP2}
\|r_n\eta_s\|_{\mathcal B_0}\le C_1(r,s) +1
\end{equation}
for all $n \in \N.$ 
\end{cor}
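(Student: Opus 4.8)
The plan is to assemble Corollary \ref{MAC} from the two preceding lemmas via the dichotomy established by the Blaschke-product lemma. First I would recall that any non-constant $\mathcal A$-stable rational function $r$ falls into exactly one of two cases: either $|r|\not\equiv 1$ on $i\R$, or $r$ is a finite Blaschke product of the form \eqref{bl}. In the first case, Lemma \ref{NC1} directly supplies constants $C=C(r)$ and $C_1=C_1(r,s)$ for which \eqref{Cgen1} and \eqref{FolP1} hold. In the second case, $r$ is a Blaschke product, hence has at least one zero in $\C_+$ (since $r$ is non-constant), so Lemma \ref{StabLP} applies and yields the same two estimates. The only genuinely degenerate possibility left is $r$ constant, but then $|r(\infty)|\le 1$ forces $r^n\eta_s$ to be a scalar multiple of $\eta_s$, whose $\mathcal B_0$-norm is finite and independent of $n$ by the computation in \eqref{Jeps}; so \eqref{Cgen1} and \eqref{FolP1} hold trivially. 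Combining the cases gives the first assertion of the corollary.

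For the scaled statement \eqref{FolP2}, the key observation is the scaling invariance \eqref{scal}: for $a>0$ the dilation $f_a(z)=f(az)$ satisfies $\|f_a\|_{\mathcal B_0}=\|f\|_{\mathcal B_0}$. I would apply this with $a=1/n$ to rewrite $r_n(z)=r^n(z/n)$. The mild complication is that $\eta_s(z)=(1+z)^{-s}$ does not scale to itself; rather, $\eta_s(z/n)=(1+z/n)^{-s}=n^s(n+z)^{-s}$. So instead of matching $\eta_s$ exactly, I would write $r_n(z)\eta_s(z)$ and compare it with the dilation of $r^n\eta_s$, estimating the discrepancy between $(1+z)^{-s}$ and the dilated weight.

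Concretely, I expect the cleanest route is to bound $\|r_n\eta_s\|_{\mathcal B_0}$ by splitting $(r_n\eta_s)' = (r_n)'\eta_s + r_n\eta_s'$ and handling the two pieces as in the proofs of Lemmas \ref{StabLP} and \ref{NC1}, where the weight $(1+z)^{-s-1}$ term contributes exactly the additive constant $1$ via \eqref{Jeps}, while the $(r_n)'\eta_s$ term inherits the dilation-invariant bound from \eqref{FolP1}. Because $|1+z|^{-s}\le |{z}/{n}+1|^{-s}\cdot n^{-s}\cdot\ldots$ does not simplify cleanly, the honest statement is that the $\mathcal B_0$-seminorm of the dilate $\bigl(r^n\eta_s\bigr)_{1/n}$ equals $\|r^n\eta_s\|_{\mathcal B_0}\le C_1(r,s)$ by \eqref{scal} and the first part, and the extra $+1$ absorbs the weight-derivative term coming from $\eta_s'$, which by \eqref{Jeps} contributes at most $1$ uniformly in $n$. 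Thus \eqref{FolP2} follows with the stated constant $C_1(r,s)+1$.

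The main obstacle I anticipate is bookkeeping the weight: one must be careful that the scaling argument is applied to $r^n$ (which scales cleanly) and not naively to the product $r^n\eta_s$, since $\eta_s$ is not dilation-invariant. The cleanest way to avoid error is to observe that the factor $|z/n+1|\ge 1$ for $z\in\C_+$, so that replacing $\eta_s(z)$ by its dilate only improves the estimate, and then invoke \eqref{scal} together with the already-established \eqref{FolP1}. Everything else is a direct appeal to the two lemmas and the elementary identity \eqref{Jeps}, so no new hard analysis is required.
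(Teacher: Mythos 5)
Your proposal is correct in outline and follows essentially the same route as the paper: the identical dichotomy for \eqref{Cgen1} and \eqref{FolP1} (constant $r$ trivial; $|r|\not\equiv 1$ on $i\R$ via Lemma \ref{NC1}; otherwise $r$ is a finite Blaschke product, hence has a zero in $\C_+$, via Lemma \ref{StabLP}), and for \eqref{FolP2} the same splitting $(r_n\eta_s)'=(r_n)'\eta_s+r_n\eta_s'$ as in \eqref{derivat1}, with the $\eta_s'$ term contributing at most $1$ by \eqref{Jeps} and $\mathcal A$-stability, and the $(r_n)'\eta_s$ term handled by a dilation comparison with $r^n\eta_s$.

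However, the one inequality you cite to justify that dilation comparison is the wrong one. The fact $|z/n+1|\ge 1$ on $\C_+$ only says the dilated weight $\eta_s(z/n)$ is bounded by $1$; using it, the $(r_n)'\eta_s$ term would only be bounded by $\int_0^\infty\sup_{y}\tfrac{1}{n}|(r^n)'(z/n)|\,dx=\|r^n\|_{\mathcal B_0}$, which by \eqref{Cgen1} may grow like $\log n$, so the argument would not close. What you actually need is the domination $|1+z|\ge |1+z/n|$ for $n\ge 1$ and $z\in\C_+$ (equivalently, after the substitution $z\mapsto nz$ implementing \eqref{scal}, the inequality $|1+nz|\ge|1+z|$, which is exactly what the paper uses). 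This gives $|\eta_s(z)|\le|\eta_s(z/n)|$, so the dilated weight genuinely dominates the original one; after the change of variables the remaining integral is $\int_0^\infty\sup_{y}|(r^n)'(z)|\,|1+z|^{-s}\,dx$, precisely the quantity bounded uniformly in $n$ in the proofs of Lemmas \ref{StabLP} and \ref{NC1}. The fix is a one-line computation, so your argument stands with this correction.
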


\begin{proof}
If $r$ is a constant function, then the statement is obvious.
Otherwise, as noted in the beginning of this section, either 
$|r|$ is not identically $1$ on $i\R,$ or $r$ is a finite Blaschke
product.
Thus, by combining Lemmas \ref{StabLP} and \ref{NC1},
we obtain \eqref{Cgen1} and \eqref{FolP1}  for 
some $C=C(r)>0,$ $C_1=C_1(r,s)>0$ and any $\mathcal A$-stable rational function $r.$ 
 
To obtain \eqref{FolP2}
it suffices to note that by \eqref{derivat1} and \eqref{Jeps},
letting $z=x+iy\in \mathbb C_+,$
\begin{align*}
\|r_{n}\eta_s\|_{\mathcal{B}_0}\le& 1+
\int_0^\infty \sup_{y\in \R}\,
\frac{|(r_n(z))'|}{|1+z|^s}\,dx\\
=&1+\int_0^\infty \sup_{y\in \R}\,
\frac{|(r^n(z))'|}{|1+nz|^s}\,dx
\le 1+\int_0^\infty \sup_{y\in \R}\,
\frac{|(r^n(z))'|}{|1+z|^s}\,dx\\
=& 1+ \|r^n \eta_s\|_{{\mathcal B}_0}.  \qedhere
\end{align*}
\end{proof}

The bound \eqref{Cgen1} 
 can be substantially strengthened if $|r(\infty)|<1.$
\begin{lemma}\label{infinity}
Let $r$ be an $\mathcal A$-stable rational function such that $|r(\infty)|<1.$
Then
\begin{equation}\label{R2}
\sup_{n \in \N}\|r^n\|_{\mathcal{B}_0}<\infty.
\end{equation}
\end{lemma}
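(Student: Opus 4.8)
The plan is to re-run the argument of Lemma \ref{NC1} in the case $s=0$, but to replace the weak exterior decay estimate \eqref{Ep12} --- which only gives $|r(z)|\le e^{-\alpha {\rm Re}\,(1/z)}\to 1$ as $z\to\infty$ and is exactly what produces the logarithm through $\int_{R/n}^\infty \sup_{t>0} S_{\alpha,n-1,0}(x,t)\,dx\asymp \log(n+1)/n$ in Lemma \ref{S11} --- by a genuine uniform bound $|r|\le\omega<1$ away from the origin. The hypothesis $|r(\infty)|<1$ is precisely what makes such a bound available: since $r=P/Q$ is rational with ${\rm deg}\,P\le{\rm deg}\,Q$ (forced by $\sup_{\C_+}|r|\le1$), the limit $r(z)\to r(\infty)$ holds uniformly as $|z|\to\infty$, so for a fixed $\omega\in(|r(\infty)|,1)$ there is $R>0$ with $|r(z)|\le\omega$ for all $z\in\overline{\C}_+$ with $|z|\ge R$.

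With this $R$ I would first recover the near-origin estimate exactly as in the proof of Lemma \ref{NC1}: setting $\alpha=|\log\omega|/R$ and $f(z)=e^{\alpha z}r(z)$, one checks $|f|\le 1$ on $\partial\mathbb D_R^{+}$ (on the imaginary diameter by $\mathcal A$-stability, and on the semicircle $|z|=R$ because $e^{\alpha{\rm Re}\,z}|r(z)|\le e^{\alpha R}\omega=1$), so the maximum principle yields $|r(z)|\le e^{-\alpha{\rm Re}\,z}$ for $z\in\mathbb D_R^{+}$, which is \eqref{Ep11}. I would also invoke the derivative bound \eqref{eqq}, namely $|r'(z)|\le C/|1+z|^2$ on $\C_+$, valid for every $\mathcal A$-stable $r$. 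Writing $(r^n)'=n\,r^{n-1}r'$, these two facts give, for $z=x+iy$,
\[
|(r^n(z))'|\le Cn\,\frac{|r(z)|^{n-1}}{|1+z|^2}\le
\begin{cases}
Cn\,e^{-\alpha(n-1)x}, & |z|<R,\\
Cn\,\omega^{n-1}(1+x)^{-2}, & |z|\ge R,
\end{cases}
\]
where for $|z|\ge R$ I used $|r|\le\omega$ together with $|1+z|^2\ge(1+x)^2$.

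It then remains to take the supremum over $y$ and integrate in $x$. Splitting $\sup_y$ according to whether $|x+iy|<R$ or $|x+iy|\ge R$ and inserting the two displayed bounds, I obtain
\[
\|r^n\|_{\mathcal B_0}\le Cn\int_0^R e^{-\alpha(n-1)x}\,dx+Cn\,\omega^{n-1}\int_0^\infty\frac{dx}{(1+x)^2}\le \frac{2C}{\alpha}+Cn\,\omega^{n-1}
\]
for $n\ge 2$, and both terms are bounded uniformly in $n$, since $\omega<1$ forces $\sup_n n\,\omega^{n-1}<\infty$; the case $n=1$ is trivial as $r\in\mathcal B$. This gives \eqref{R2}.

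I do not expect a genuine obstacle here: the entire content is the observation that $|r(\infty)|<1$ upgrades the exterior estimate from a vanishing exponent to a fixed $\omega<1$, converting the offending $\log(n+1)/n$ contribution of Lemma \ref{S11} into the harmless factor $\omega^{n-1}$. The only mild care required is the bookkeeping of the $\sup_y$ split across the regions $|z|<R$ and $|z|\ge R$, and re-deriving the near-origin bound \eqref{Ep11} for this particular (large) choice of $R$; both are direct adaptations of the proof of Lemma \ref{NC1}.
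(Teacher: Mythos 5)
Your proposal is correct and follows essentially the same route as the paper's own proof: the uniform exterior bound $|r|\le\omega<1$ furnished by $|r(\infty)|<1$, the maximum-principle argument from Lemma \ref{NC1} giving $|r(z)|\le e^{-\alpha\operatorname{Re}z}$ on $\mathbb D_R^{+}$, the derivative bound \eqref{eqq}, and the same split of the integral into the regions $|z|<R$ and $|z|\ge R$, yielding the uniformly bounded terms $2C/\alpha$ and $Cn\,\omega^{n-1}$. No gaps.
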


\begin{proof}
Let $n \in \mathbb N$ be fixed.
Observe that there exists $C=C(r)>0$ such that
\begin{equation}\label{rr}
|(r^n(z))'|
\le \frac{Cn}{|1+z|^2}|r(z)|^{n-1}, \qquad z \in \C_+.
\end{equation}
Next,
fix $\omega \in (|r(\infty)|,1).$ By assumption, there exists $R>0$ such that
\begin{equation}\label{R0}
|r(z)|\le \omega<1, \qquad z \in \overline{\C}_+ \setminus \mathbb D_R^+.
\end{equation}
Arguing as in the proof of Lemma \ref{NC1}
and letting $\alpha=\frac{|\log\omega|}{R},$ 
we infer that by 
the maximum principle, 
\begin{equation}\label{R1}
|r(z)|\le e^{- \alpha {\rm Re}\,z},\qquad z\in \mathbb D_R^{+}.
\end{equation}

From \eqref{rr}, \eqref{R0} and \eqref{R1}, 
taking into account that $\|r\|_{{\rm H}^\infty(\C_+)}\le 1,$
it then follows that 
\[
|(r^n(z))'|\le \begin{cases}  C n e^{-\alpha (n-1) {\rm Re}\,z},& \quad z\in \mathbb D_R^{+},\\
{C n\omega^{n-1}}{|1+z|^{-2}},& \quad z\in \C_{+}\setminus \mathbb D_R^{+}.
\end{cases}
\]
Hence, letting $z=x+i y\in \C_{+},$ we have
\begin{align*}
\|r^n\|_{\mathcal{B}_0}\le& \int_0^R \sup_{y\in\R,\,z\in \mathbb D_{R}^{+}}\,|(r^n(z))'|\,dx+
\int_0^\infty \sup_{y\in\R,\, z\in \C_+\setminus \mathbb D^+_R}\,|(r^n(z))'|\,dx\\
\le& C n\int_0^R  e^{-\alpha (n-1) x}\,dx+
C n \omega^{n-1}\int_0^\infty \frac{dx}{(1+x)^2}\\
\le& \frac{C n}{\alpha (n-1)}+C n\omega^{n-1}
\le \frac{2 C R}{|\log\omega|}+\frac{C}{e\omega|\log\omega|},
\end{align*}
so that \eqref{R2} holds. 
\end{proof}

\subsection{Estimates for approximation rates in terms of $\mathcal B_0$-norms}
If a rational function $r$ approximates the exponential with some order, $s \in [0,q+1],$ and $R_0>0$ is small enough, 
then $r$ extends holomorphically to $\mathbb D_{R_0},$
and for  $n \in \mathbb N$ we let
\[
\Delta_{r, n}(z):=e^{-z}-r^n(z/n), \qquad z \in \mathbb D_{R_0n},
\] 
and
\[ 
  \Delta_{r, n, s}(z):=z^{-s}\Delta_{r,n}(z),  \qquad z\in \mathbb D^+_{R_0n},
\]
%Clearly, $\Delta_{r,n}, \Delta_{r,n,s} \in {\rm Hol}(\mathbb D_{R_0}),$ and moreover $\Delta_{r,n,s}$
%extends continuously to $(-iR_0,iR_0).$
so that $\Delta_{r, n, 0}=\Delta_{r, n}$ on $\mathbb D^+_{R_0n}.$
 Clearly,
$\Delta _{r, n} \in {\mathrm{Hol}}(\mathbb D_{R_0n})$  and $ \Delta _{r, n, s} \in {\mathrm{Hol}}(\mathbb D^+_{R_{0}n}).$ Moreover, $\Delta _{r,n,s}$ extends continuously to
$(-iR_{0}n,\allowbreak iR_{0}n)$.
If, in addition, $r$ is $\mathcal A$-stable, then both $\Delta_{r,n}$ and $\Delta_{r,n,s}$ 
extend holomorphically to $\mathbb C_+,$ and continuously to $\overline{\C}_+.$
We denote the extensions by \emph{the same symbols}.

Given a rational approximation $r$ to the exponential, 
we turn to obtaining estimates for the $\mathcal B_0$-norms of $\Delta_{r,n,s}.$
In Section \ref{fcc}, these estimates  
will be converted into semigroup approximation rates by means of the $\mathcal B$-calculus.
The next lemma is an important tool for obtaining 
the $\mathcal B_0$-norm bounds for $\Delta_{r,n,s}$
which are optimal with respect to approximation order of $r$.
\begin{lemma}\label{rates}
Let $r$ be a rational approximation of order $q \in \N$ to the exponential.
If 
\begin{equation}\label{deff}
s\in [0,q+1], \quad \, \delta:=q/(q+1), \quad \, \text{and} \quad \, 
a:=\frac{r^{(q+1)}(0)-(-1)^{q+1}}{(q+1)!},
\end{equation}
then there exist $R=R(r) \in (0,1)$ and $c=c(r)>0$ such that 
the following hold.
\begin{itemize}
\item [(i)] One has 
\begin{equation}\label{main112}
|\Delta_{r, n, s}'(z)|\le
\left(|a|(q+1-s)\frac{|z|^{q-s}}{n^q}+ \frac{c}{n^{\delta s}}\right)
e^{-{\rm Re}\,z} 
\end{equation}
for all $z \in \mathbb D^+_{R n^{\delta}}$ and $n \in \N.$
\item [(ii)]
For every $n \in \mathbb N$ there exists
$u_n
\in {\rm Hol}(\mathbb D_{R n})$ 
such that 
\begin{equation}\label{Ma1}
\Delta_{r, n, s}(z)=
\left(1-e^{az^{q+1}/n^q}e^{u_n(z)}\right){e^{-z}}{z^{-s}}, \qquad |u_n(z)|\le c\frac{|z|^{q+2}}{n^{q+1}},
\end{equation}
for all $z\in \overline{\mathbb D}^+_{Rn^{\delta}}$ and $n \in \N.$ 
\end{itemize}
\end{lemma}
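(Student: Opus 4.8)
The plan is to reduce both parts to a single multiplicative normal form for $r$ near the origin and then to read off (ii) by scaling and (i) by differentiation. Since $r$ approximates the exponential to order $q$, we have $r(0)=1$ and, with $a$ as in \eqref{deff}, $e^{w}r(w)=1+aw^{q+1}+O(w^{q+2})$ as $w\to0$. Hence $e^{w}r(w)$ is holomorphic and zero-free on some disc $\mathbb D_{R_0}$, its principal logarithm $g(w):=\log(e^{w}r(w))$ is holomorphic there, and $g(w)=aw^{q+1}+v(w)$ with $v\in{\rm Hol}(\mathbb D_{R_0})$ vanishing to order $q+2$. Writing $v(w)=w^{q+2}\tilde v(w)$ and differentiating the power series, I would record the bounds $|v(w)|\le c_0|w|^{q+2}$ and $|v'(w)|\le c_1|w|^{q+1}$ on a slightly smaller disc, which give the normal form $r(w)=e^{-w}\exp(aw^{q+1}+v(w))$.

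Part (ii) then follows by raising this identity to the $n$-th power and substituting $w=z/n$: one gets $r^{n}(z/n)=e^{-z}\exp\!\big(az^{q+1}/n^{q}+u_n(z)\big)$ with $u_n(z):=nv(z/n)$. Choosing $R\le R_0/2$ keeps $u_n$ holomorphic on $\mathbb D_{Rn}$, and the scaling bound $|u_n(z)|=n|v(z/n)|\le c_0|z|^{q+2}/n^{q+1}$ is precisely \eqref{Ma1}; inserting this into $\Delta_{r,n,s}=z^{-s}(e^{-z}-r^{n}(z/n))$ produces the stated product form. The same substitution gives $u_n'(z)=v'(z/n)$, hence $|u_n'(z)|\le c_1|z|^{q+1}/n^{q+1}$ on the same disc, a bound I will need for part (i).

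The decisive point for (i) is the choice $\delta=q/(q+1)$, which gives $(q+1)\delta=q$, so that on $\mathbb D^{+}_{Rn^{\delta}}$ the quantity $|z|^{q+1}/n^{q}\le R^{q+1}$ is bounded independently of $n$. After shrinking $R$ I may assume $|\phi_n(z)|\le1/2$ on $\mathbb D^{+}_{Rn^{\delta}}$, where $\phi_n:=az^{q+1}/n^{q}+u_n$, whence $|e^{\phi_n}|\le2$, $|e^{\phi_n}-1|\le2|\phi_n|$ and $|e^{\phi_n}-1-\phi_n|\le|\phi_n|^{2}$. Differentiating the product form of $\Delta_{r,n,s}$ and collecting the contributions coming only from the explicit part $\psi_n(z):=az^{q+1}/n^{q}$ (that is, replacing $\phi_n$ by $\psi_n$ and $e^{\phi_n}$ by $1$, and dropping $u_n$, $u_n'$, and the quadratic remainder), these combine to $-a(q+1-s)e^{-z}z^{q-s}/n^{q}+ae^{-z}z^{q+1-s}/n^{q}$, whose first summand is exactly the main term in \eqref{main112}. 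Every remaining contribution---the sub-leading $ae^{-z}z^{q+1-s}/n^{q}$, the terms carrying $u_n$ or $u_n'$, and the terms carrying $e^{\phi_n}-1$ or $e^{\phi_n}-1-\phi_n$---is, after the elementary estimates above, of the form ${\rm const}\cdot|z|^{\mu}n^{-\nu}e^{-{\rm Re}\,z}$, and since $|z|\le Rn^{\delta}$ turns $|z|^{\mu}n^{-\nu}$ into $R^{\mu}n^{\delta\mu-\nu}$, each is $\le{\rm const}\cdot n^{-\delta s}e^{-{\rm Re}\,z}$ once one checks $\delta\mu-\nu\le-\delta s$. Summing yields the remainder $c\,n^{-\delta s}e^{-{\rm Re}\,z}$.

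The main obstacle is the bookkeeping in this last step: verifying $\delta\mu-\nu\le-\delta s$ for each error monomial, which rests on repeated use of $(q+1)\delta=q$ and $\delta-1=-1/(q+1)$, and on keeping track that the net power of $z$ stays nonnegative except for the genuine blow-up of the main term near $z=0$ when $s>q$---there the factor $(q+1-s)$ renders the bound consistent. A final check is that all constants remain bounded as $s$ ranges over $[0,q+1]$ and $R<1$, so that $R$ and $c$ can be taken to depend on $r$ alone, as required.
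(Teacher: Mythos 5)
Your proposal is correct and follows essentially the same path as the paper: both establish the multiplicative normal form $r(w)=e^{-w}\exp\bigl(aw^{q+1}+v(w)\bigr)$ near the origin (the paper writes $r=e^{-z}(1+\psi)$, $\log(1+\psi)=az^{q+1}+m_0$) and obtain part (ii) by taking $n$-th powers and scaling, with $u_n(z)=n\,v(z/n)$, exactly as you do. The one real difference is in part (i): the paper never bounds the derivative of the logarithmic remainder; instead it computes $\Delta_{r,n}'$ through the logarithmic derivative $r_n'/r_n=-1+\psi'(z/n)\bigl(1-\psi(z/n)/(1+\psi(z/n))\bigr)$, so that only the directly available bounds on $\psi$, $\psi'$, $m=\psi-az^{q+1}$ and $m'$ enter. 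You instead differentiate the exponential form itself and control $u_n'$ via the Cauchy-type estimate $|v'(w)|\lesssim|w|^{q+1}$ on a shrunken disc (writing $v=w^{q+2}\tilde v$ and estimating $\tilde v'$), which is legitimate and arguably a bit more streamlined, at the cost of one extra disc-shrinking. Your final bookkeeping --- every error monomial being of the form $|z|^{\mu}n^{-\nu}e^{-\mathrm{Re}\,z}$ with $\mu\ge 0$ and $\delta\mu-\nu\le-\delta s$, using $(q+1)\delta=q$, and the cancellation $s z^{-s-1}\psi_n-z^{-s}\psi_n'$ producing the coefficient $(q+1-s)$ that tames the $z\to 0$ behaviour when $s>q$ --- is precisely what the paper's estimates for its remainders $m_{3,n}$ and $m_{4,n}$ accomplish, so the proof is complete as outlined.
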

The proof of Lemma \ref{rates} is rather technical, and is postponed to Appendix.

The following result
 is behind Theorem \ref{mainBT},(ii).
\begin{lemma}\label{T1P}
Let $r$ be an $\mathcal A$-stable rational
approximation of order $q$ to the exponential.
If  $\delta=q/(q+1),$ then  for every $s\in (0,q+1]$ 
there exists $C=C(r,s)>0$ such that
\begin{equation}\label{ThAP}
\|\Delta_{r, n, s}\|_{{\mathcal B}_0}\le C n^{-{\delta s}}, \qquad n \in \mathbb \N.
\end{equation}
\end{lemma}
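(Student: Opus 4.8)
The plan is to estimate $\|\Delta_{r,n,s}\|_{\mathcal{B}_0}$ by splitting the right half-plane into the ``near'' region $\mathbb{D}^+_{Rn^\delta}$, where the refined bounds of Lemma~\ref{rates} apply, and the ``far'' region $\mathbb{C}_+\setminus\mathbb{D}^+_{Rn^\delta}$, where $e^{-z}$ and $r^n(z/n)$ must be controlled separately. On the near region I would use part~(i) of Lemma~\ref{rates}: integrating $\sup_{y}|\Delta_{r,n,s}'(x+iy)|$ over $x\in(0,Rn^\delta)$, the factor $e^{-\operatorname{Re} z}$ gives convergence of the $x$-integral, while the prefactor splits into two pieces. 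The piece $|a|(q+1-s)|z|^{q-s}n^{-q}$ contributes (after taking the supremum in $y$ and integrating, using that $|z|^{q-s}e^{-x}$ is integrable against the half-plane supremum, which introduces a factor $(n^\delta)^{q-s+1}=n^{\delta(q-s+1)}$ in the worst case) a term of order $n^{-q}\cdot n^{\delta(q+1-s)}$; recalling $\delta(q+1)=q$, this is $n^{-q+q-\delta s}=n^{-\delta s}$, which is exactly the target rate. The second piece $c\,n^{-\delta s}e^{-x}$ integrates trivially to something of order $n^{-\delta s}$.

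The main obstacle will be the far region, where Lemma~\ref{rates} no longer applies and one must bound $\|(e^{-z}-r^n(z/n))z^{-s}\|_{\mathcal{B}_0}$ restricted to $\operatorname{Re} z$ and $|z|$ large. The idea is to split $\Delta_{r,n,s}=e^{-z}z^{-s}-r^n(z/n)z^{-s}$ and treat each summand. For the pure exponential part $e^{-z}z^{-s}$ one computes $\|e^{-z}z^{-s}\chi_{\{x>Rn^\delta\}}\|_{\mathcal{B}_0}$ directly: differentiating gives terms $|{-z^{-s}-sz^{-s-1}}|\,e^{-\operatorname{Re} z}$, and since we only integrate over $x>Rn^\delta$, the exponential decay $e^{-x}$ forces this tail to be $O(e^{-cRn^\delta})$, which is far smaller than $n^{-\delta s}$. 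For the rational part I would invoke the already-established stability estimate: using the scaling invariance \eqref{scal} together with Corollary~\ref{MAC}, specifically the bound \eqref{FolP1}/\eqref{FolP2} on $\|r^n\eta_s\|_{\mathcal{B}_0}$, one controls $\|r^n(z/n)z^{-s}\|_{\mathcal{B}_0}$ on the far region. The decay in $n$ there again comes from the smallness of $r^n$ at large $|z|$, established in Lemmas~\ref{StabLP} and~\ref{NC1}.

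Concretely, for the far tail I expect the cleanest route is not to separate $e^{-z}$ and $r^n$ but to bound $\Delta_{r,n,s}$ via its value at the boundary point $x=Rn^\delta$ together with the derivative estimates beyond it; since $\Delta_{r,n,s}$ is holomorphic and bounded on $\mathbb{C}_+$, one has
\[
\|\Delta_{r,n,s}\|_{\mathcal{B}_0}
=\int_0^\infty\sup_{y\in\mathbb{R}}|\Delta_{r,n,s}'(x+iy)|\,dx,
\]
and the far portion of this integral is dominated using the decay estimates of $r^n$ from the earlier lemmas (which give $|(r^n(z/n))'|\lesssim n^{-?}|1+z|^{-2}$ type bounds on the relevant range) plus the exponential decay of the derivative of $e^{-z}z^{-s}$. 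The key point is that both pieces on the far region decay faster than any polynomial power of $n^{-1}$, hence are negligible compared with the near-region contribution.

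Assembling the two regions, the near part yields the dominant $C n^{-\delta s}$ and the far part is absorbed into it, giving \eqref{ThAP}. The delicate bookkeeping is the interplay between the radius $Rn^\delta$ and the powers of $|z|$ in part~(i) of Lemma~\ref{rates}: one must check that enlarging the near region to radius $n^\delta$ (rather than a fixed constant) is precisely what converts the crude order-$q$ error into the sharp rate $\delta s$, and that the cross term $|z|^{q-s}$ does not overwhelm this when $s$ is small. This balancing, together with verifying that the far-region remainder is genuinely subordinate, is where the real work lies; the rest is routine integration of the explicit bounds supplied by Lemma~\ref{rates}.
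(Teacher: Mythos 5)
Your near-region analysis coincides with the paper's: integrating the bound of Lemma \ref{rates},(i) over the half-disk $\mathbb D^+_{Rn^\delta}$ and using $\delta(q+1)=q$ produces the rate $n^{-\delta s}$. The genuine gap is in your treatment of the far region. The set where Lemma \ref{rates} fails is the \emph{exterior of the half-disk} $\{z\in\C_+:\,|z|\ge Rn^{\delta}\}$, not the half-plane $\{\Re{z}>Rn^{\delta}\}$; since the $\mathcal B_0$-seminorm takes $\sup_{y\in\R}$ at every fixed $x$, for each small $x>0$ the supremum still sees far-region points $x+iy$ with $|y|\ge Rn^{\delta}$. There $e^{-\Re{z}}$ gives no decay whatsoever, and for a finite Blaschke product (e.g. the Cayley transform $v$, which is an $\mathcal A$-stable approximation of order $1$) one has $|r^n(z/n)|=1$ on $i\R$, so $r^n$ is not small near the imaginary axis either. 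Consequently both of your key claims --- that the exponential tail is $O(e^{-cRn^{\delta}})$ and that the rational part ``decays faster than any polynomial power of $n^{-1}$'' --- are false. The far-region contribution is genuinely of the same order $n^{-\delta s}$ as the near region, not subordinate to it.

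The far region can still be handled, but by a different mechanism, which is what the paper actually does. For the term containing $\Delta_{r,n}$ undifferentiated one uses only $\|\Delta_{r,n}\|_{{\rm H}^\infty(\C_+)}\le 2$ together with the geometric constraint $|z|\ge Rn^{\delta}$, which gives $\int_0^\infty \sup_{y\in\R,\,|z|\ge Rn^{\delta}}|z|^{-s-1}\,dx\le C R^{-s}n^{-\delta s}$. For the term containing $(r^n(z/n))'$ the decisive step is a weight conversion: after rescaling $z\mapsto nz$, the constraint becomes $|z|\ge Rn^{\delta-1}$, whence $2n|z|\ge Rn^{\delta}|1+z|$ and therefore $\frac{1}{n^s|z|^s}\le \frac{2^s}{R^s n^{\delta s}}\cdot\frac{1}{|1+z|^s}$. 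This converts the weight $|z|^{-s}$ into the weight $\eta_s$ \emph{times} the explicit factor $n^{-\delta s}$, and only then does Corollary \ref{MAC} (the uniform bound $\|r^n\eta_s\|_{\mathcal B_0}\le C_1(r,s)$) close the estimate. You do cite Corollary \ref{MAC}, but without this conversion its bound yields only $O(1)$ for the far region, which is insufficient; and the decay it supplies has nothing to do with smallness of $r^n$ at large $|z|$, which is the mechanism you invoke. That weight-conversion step is precisely what is missing from your argument.
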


\begin{proof}
Let $s\in (0,q+1]$ and $n \in \N$ be fixed.
From  Lemma \ref{rates},(i)
it follows that there are $R \in (0,1)$ and $c >0$
(both depending on $r$)
such that
\begin{equation}\label{mainP}
|\Delta_{r, n, s}'(z)|\le |a| (q+1-s)\frac{|z|^{q-s}}{n^q}+\frac{c}{n^{\delta s}}e^{-{\rm Re}\,z},
\qquad z\in \mathbb D^{+}_{Rn^{\delta}},
\end{equation}
where $a$ is defined as in \eqref{deff}.
For $z \in \C_+$ write $z=x+iy.$
We have
\begin{equation}\label{bound}
\|\Delta_{r, n, s}\|_{{\mathcal B}_0}\le
B_{n,s}+U^{(1)}_{n,s}+U^{(2)}_{n,s},
\end{equation}
where 
\begin{align*}
B_{n,s}=&\int_0^{n^\delta}
\sup_{y\in \R,\,|z|\le n^{\delta}}\,
|\Delta_{r, n,s}'(z)|\,dx,\\
U^{(1)}_{n,s}=&\int_0^\infty
\sup_{y\in \R, \, |z|>R n^{\delta}}\,|\Delta_{r,n}'(z)||z|^{-s}\,dx,
\end{align*}
and
\[
U^{(2)}_{n,s}=s\int_0^\infty
\sup_{y\in \R,\,|z|>Rn^{\delta}}\,|\Delta_{r,n}(z)|
|z|^{-s-1}\,dx.
\]
We estimate each of the terms in the right hand side of \eqref{bound} separately.

First, note that by \eqref{mainP},
\[
B_{n,s}\le |a| B_{n,s}^{(1)}+ c B_{n,s}^{(2)},
\]
where 
\begin{align*}
&B_{n,s}^{(1)}
=\frac{(q+1-s)}{n^{q}}\int_0^{n^\delta}
\,\sup_{|y|\le n^{\delta}}\,
(x^2+y^2)^{(q-s)/2}\,dx
% \,\, \, \text{for}\, 
\,\,\,\,\, \text{if} s\,\, \in (0,q+1),\\
% \quad B_{n,q+1}^{(1)}=0,
&B_{n,q+1}^{(1)}=0,
\end{align*}
%if $s \in (0,q+1),$
%$B_{n,q+1}^{(1)}=0,$ 
and
\[
B_{n,s}^{(2)}=
\frac{1}{n^{\delta s}}\int_0^{n^{\delta}} e^{-x}\,dx \le\frac{1}{n^{\delta s}}.
\]

\noindent
Furthermore, if $s\in (0,q],$ then
\begin{align*}
B_{n,s}^{(1)}=&\frac{(q+1-s)}{n^{q}}\int_0^{n^\delta}
(x^2+n^{2\delta})^{(q-s)/2}\,dx\\
= &\frac{(q+1-s)}{n^{\delta s}}\int_0^1
(t^2+1)^{(q-s)/2}\,dt
\le \frac{q+1}{n^{\delta s}} 2^{q/2},
\end{align*}
and, if $s\in (q,q+1),$ then
\[
B_{n,s}^{(1)}
= \frac{(q+1-s)}{n^{q}}\int_0^{n^{\delta}}\,
\frac{dx}{x^{s-q}}
= \frac{1}{n^{\delta s}}.
\]

\noindent
So, combining the estimates for $B^{(1)}_{n,s}$ and $B^{(2)}_{n,s}$ above,
we obtain that
\begin{equation}\label{LP}
B_{n,s}\le
 \frac{|a|(q+1)2^{q/2}+c}{n^{\delta s}}.
\end{equation}

Next,
since $r$ is $\mathcal A$-stable, we have $\|\Delta_{r, n}\|_{{\rm H}^\infty(\C_+)}\le 2$.
Using this we infer that
\begin{equation}\label{I1P}
\begin{aligned}
U^{(2)}_{n,s}\le&
2s\int_{Rn^{\delta}}^\infty
\sup_{y\in \R}\,|z|^{-s-1}\,dx+
2s \int_0^{R n^{\delta}}
\sup_{|y|\ge R n^{\delta}}\,|z|^{-s-1}\,dx \\
=&2s\int_{R n^{\delta}}^\infty
\frac{dx}{x^{s+1}}+
2s\int_0^{R n^{\delta}}
\frac{dx}{(x^2+R^2n^{2\delta})^{(s+1)/2}}
\le \frac{2(2+q)}{R^s n^{\delta s}}
% \label{I2P}.
\end{aligned}
\end{equation}

\noindent
Finally, to estimate $U^{(1)}_{n,s},$ note that 
\begin{align*}
U^{(1)}_{n,s}
\le& \int_0^\infty
\sup_{y\in \R, \, |z|\ge Rn^{\delta}}|z|^{-s} e^{-x}\,dx
+\int_0^\infty
\sup_{y\in \R, \, |z|\ge Rn^{\delta}}\,
\frac{|(r^n(z/n))'|}{|z|^s}\,dx\\
\le& \frac{1}{R^s n^{\delta s}}
+\int_0^\infty
\sup_{y\in \R, \, |z|\ge Rn^{\delta-1}}\,
\frac{|(r^n(z))'|}{n^s|z|^s}\,dx.
\end{align*}
If
$|z|\ge  Rn^{\delta-1},$
then, recalling that $\delta \in (0,1)$ and $R\in (0,1),$ we have
\begin{align*}
2n|z|\ge
n|z|+R n^\delta \ge Rn^\delta(|z|+1)
\ge R n^{\delta}|z+1|.
\end{align*}
 
\noindent
Hence, if  $C_1(r,s)$ is the constant given by  Corollary \ref{MAC},
then
\begin{equation}\label{I2P}
U^{(1)}_{n,s} \le \frac{1}{R^s n^{\delta s}} +
\frac{2^s}{R^s n^{\delta s}}\int_0^\infty\,\sup_{y\in \R}\,
\frac{|(r^n(z))'|}{|1+z|^s}\,dx
\le \frac{1+2^s C_1(r,s)}{R^s n^{\delta s}}.
\end{equation}

Using \eqref{bound} and taking into account  (\ref{LP}), (\ref{I1P}), 
%(\ref{I2P}) 
and (\ref{I2P}),
one obtains (\ref{ThAP}).
\end{proof}
\begin{remark}
By combining  \eqref{main112} and the estimate for $C_1(r,s)$ from the proof of  
Corollary \ref{MAC} (i.e. the proofs of Lemmas \ref{StabLP} and \ref{NC1}), it is easy to show that
$C=C(r,s)$ in \eqref{ThAP} satisfies $\sup_{x\le s \le q+1}C(r,s)<\infty$ for every $x >0.$
\end{remark}

\subsection{Estimates for subdiagonal Pad\'e approximations}

Following the ideology of the previous subsections and having in mind Theorem \ref{mainER},
we proceed with obtaining fine $\mathcal B_0$-norm estimates for $z \to (e^{-z}- r_{[n,n+1]}(z))z^{-s},$
where $(r_{[n,n+1]})_{n \ge 1}$ are the first subdiagonal Pad\'e approximations of
$e^{-z}.$

Recall that if $P_n$ and $Q_{n+1}$ are polynomials such that  $r_{[n,n+1]}=P_n/Q_{n+1},$ $ n \in \N,$ 
$Q_{n+1}(0)=1,$
then for all $n \in \N,$
\begin{align}\label{polynom}
P_n(z) =& \sum_{j=0}^{n} \frac{(2n+1-j)!n!}{(2n+1)!j!(n-j)!} (-z)^j, \\
Q_{n+1}(z) =& \sum_{j=0}^{n+1} \frac{(2n+1-j)!(n+1)!}{(2n+1)!j!(n+1-j)!} z^j,\label{polynom1}
\end{align}
and $r_{[n,n+1]}$ are 
  $\mathcal A$-stable 
	rational approximations of order $2n+1$ to $e^{-z},$
see e.g. \cite[Theorem 3.11]{Hairer}.
Fix 
 $n \in \N$ and define the approximation error $\Delta_{r_{[n,n+1]}}$ by 
$$
\Delta_{r_{[n,n+1]}}(z):=r_{[n,n+1]}(z)-e^{-z}, \qquad z \in \C_+.
$$
The next classical Perron representation for  $\Delta_{r_{[n,n+1]}}:$
\begin{equation}\label{AppDelta}
\Delta_{r_{[n,n+1]}}(z)=\frac{1}{Q_{n+1}(z)}\frac{z^{2n+2}}{(2n+1)!}
\int_0^1 (1-t)^n t^{n+1}e^{-z t}\,dt, \qquad z \in \C_+,
\end{equation}
is behind several basic properties of Pad\'e approximations to $e^{-z}.$
It can be found in e.g. \cite[p. 643]{Baker}, see also \cite[p. 880]{EgertR} and \cite[p. 3566]{Neubr}.
Note that 
\begin{equation}\label{qqq}
|Q_{n+1} (z)| \ge  1\qquad \text{and} \qquad \frac{|Q_{n+1}'(z)|}{|Q_{n+1}(z)|}\le 1, 
\end{equation}
for all $z \in \overline{\mathbb{C}}_{+}$ and $n \in \mathbb{N}$. For
$z =it, t \in \mathbb R$, the bounds in \eqref{qqq} were noted in
\cite[Propositions 1 and 2]{Neubr} and \cite[Lemma 3.1]{EgertR}. They extend
to $\mathbb{C}_{+}$ by the maximum principle applied to $1/Q_{n+1}$ and
$Q'_{n+1}/Q_{n+1}$. 
As noted in \cite[Lemma 3.2]{EgertR}, combining \eqref{AppDelta} with the first bound in \eqref{qqq}
yields 
\[
|\Delta _{r_{[n,n+1]}}(z)|\le \frac{1}{2} \left(\frac{n!}{(2n+1)!}\right)^2|z|^{2n+2},
\qquad z\in \mathbb{C}_{+}.
\]
Hence, invoking the simple inequality (\cite[Lemma 3.5]{EgertR}):
\begin{equation}
\label{inequal}
\frac{n!}{(2n+1)!}\le \frac{1}{(n+1)^{n+1}}, \qquad n \in \mathbb{N},
\end{equation}
we obtain
\begin{equation}
\label{EsR1}
|\Delta _{r_{[n,n+1]}}(z)|\le \frac{|z|^{2n+2}}{2(n+1)^{2(n+1)}},
\qquad z\in \mathbb{C}_{+}.
\end{equation}

To facilitate the subsequent estimates, we will need several additional properties of $\Delta_{r_{[n,n+1]}}$ given in the lemma below.

\begin{lemma}\label{Pade}
Let $r_{[n,n+1]}=P_n/Q_{n+1}, n \in \mathbb N,$ be the first subdiagonal Pad\'e approximations to $e^{-z},$
where $P_n$ and $Q_{n+1}$ are given by \eqref{polynom} and \eqref{polynom1}, respectively.
\begin{itemize}
\item [(i)] For all $n \in \N,$ 
\begin{equation}\label{EsR4}
\|\Delta_{r_{[n,n+1]}}\|_{{\rm H}^\infty(\C_+)}\le 2,\qquad \|\Delta_{r_{[n,n+1]}}'\|_{{\rm H}^\infty(\C_+)}\le 2.
\end{equation}
\item [(ii)] For all $n \in \N$ and $z \in \mathbb D^+_n,$
\begin{equation}\label{EsR2}
|\Delta_{r_{[n,n+1]}}'(z)|
\le 2\frac{|z|^{2n+1}}{(2n)!}
\int_0^1 (1-t)^n t^{n+1}e^{-t{\rm Re}\, z}\,dt.
\end{equation}
\end{itemize}

\end{lemma}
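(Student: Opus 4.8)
The plan is to derive both parts directly from the Perron representation \eqref{AppDelta} together with the two bounds in \eqref{qqq}. For part~(i), I would first recall that $r_{[n,n+1]}$ is $\mathcal A$-stable, so $\|r_{[n,n+1]}\|_{{\rm H}^\infty(\C_+)}\le 1$, and since $\|e^{-z}\|_{{\rm H}^\infty(\C_+)}\le 1$ as well, the triangle inequality immediately gives $\|\Delta_{r_{[n,n+1]}}\|_{{\rm H}^\infty(\C_+)}\le 2$. The second bound in \eqref{EsR4} is the more delicate one. Writing $\Delta_{r_{[n,n+1]}}(z)=r_{[n,n+1]}(z)-e^{-z}$, we have $\|(e^{-z})'\|_{{\rm H}^\infty(\C_+)}=\|-e^{-z}\|_{{\rm H}^\infty(\C_+)}\le 1$, so it suffices to bound $\|r_{[n,n+1]}'\|_{{\rm H}^\infty(\C_+)}$ by $1$. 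To this end I would differentiate $r_{[n,n+1]}=P_n/Q_{n+1}$ as $r_{[n,n+1]}'=\tfrac{P_n'Q_{n+1}-P_nQ_{n+1}'}{Q_{n+1}^2}$ and estimate on the imaginary axis using the structural symmetry of the subdiagonal Pad\'e approximants; alternatively, differentiate the Perron formula and combine with the first estimate in \eqref{qqq}. The cleanest route is probably to observe that $r_{[n,n+1]}'$ is again a rational function bounded on $i\R$ and holomorphic on $\C_+$, so the maximum principle reduces everything to an estimate on the imaginary axis, which can be read off from \eqref{polynom}--\eqref{polynom1}.

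For part~(ii), the natural approach is to differentiate the Perron representation \eqref{AppDelta} and then apply the bounds in \eqref{qqq}. Writing the representation as
\[
\Delta_{r_{[n,n+1]}}(z)=\frac{1}{Q_{n+1}(z)}\,\frac{z^{2n+2}}{(2n+1)!}\,J_n(z),
\qquad J_n(z):=\int_0^1 (1-t)^n t^{n+1}e^{-zt}\,dt,
\]
the product rule produces three terms: one from differentiating $z^{2n+2}$, one from differentiating $1/Q_{n+1}$, and one from differentiating $J_n$. The first term is of size $\tfrac{(2n+2)}{(2n+1)!}|z|^{2n+1}|J_n(z)|/|Q_{n+1}(z)|$; the second involves $|Q_{n+1}'(z)|/|Q_{n+1}(z)|^2$, which by the second bound in \eqref{qqq} is controlled by $1/|Q_{n+1}(z)|\le 1$; the third involves $J_n'(z)=-\int_0^1(1-t)^nt^{n+2}e^{-zt}\,dt$. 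Using $|Q_{n+1}(z)|\ge 1$ and $|e^{-zt}|=e^{-t\,{\rm Re}\,z}$, every exponential integrand is dominated by $(1-t)^nt^{n+1}e^{-t\,{\rm Re}\,z}$ (the extra factor $t$ in $J_n'$ only helps). The task is then to absorb the three contributions into the single clean bound \eqref{EsR2}, namely $2\,\tfrac{|z|^{2n+1}}{(2n)!}\int_0^1(1-t)^nt^{n+1}e^{-t\,{\rm Re}\,z}\,dt$.

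The arithmetic of combining the three terms is where the factor of $2$ and the passage from $(2n+1)!$ to $(2n)!$ must be justified, and this is the step I expect to require the most care. The restriction $z\in\mathbb D_n^+$ is presumably what makes the $|z|^{2n+2}$-term (from differentiating $z^{2n+2}$ and from the $Q_{n+1}'/Q_{n+1}$ factor) reabsorbable into a $|z|^{2n+1}$-term, since on $\mathbb D_n^+$ one has $|z|\le n$, so that a spare factor $|z|$ can be traded against the gain from $\tfrac{2n+2}{(2n+1)!}=\tfrac{1}{(2n)!}\cdot\tfrac{2n+2}{2n+1}$ and the bound $|z|/n\le 1$; this is exactly the kind of estimate that converts $\tfrac{(2n+2)|z|^{2n+2}}{(2n+1)!}$ into something $\le 2\tfrac{|z|^{2n+1}}{(2n)!}$ once $|z|\le n$. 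I would carry out this bookkeeping explicitly, checking that the sum of the three contributions does not exceed the stated constant $2$, with the disk restriction $|z|<n$ invoked precisely to control the degree mismatch. The main obstacle is therefore not conceptual but the sharp tracking of constants and factorial ratios so that the final bound comes out with the clean form \eqref{EsR2} rather than a weaker version.
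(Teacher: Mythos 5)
Your part (ii) and the first bound in part (i) are correct and follow essentially the same route as the paper: the first bound in \eqref{EsR4} is exactly the triangle inequality with $\mathcal A$-stability, and for \eqref{EsR2} the paper does precisely what you describe — differentiate the Perron representation \eqref{AppDelta}, bound the three resulting terms using \eqref{qqq} and $|e^{-zt}|=e^{-t\,{\rm Re}\,z}$, replace $t^{n+2}$ by $t^{n+1}$, and use $|z|\le n$ on $\mathbb D_n^+$ to trade the spare factor of $|z|$. Your bookkeeping worry resolves cleanly: the three coefficients are $\tfrac{n}{(2n+1)!}$, $\tfrac{2n+2}{(2n+1)!}$ and $\tfrac{n}{(2n+1)!}$, whose sum is $\tfrac{4n+2}{(2n+1)!}=\tfrac{2}{(2n)!}$, so the constant $2$ in \eqref{EsR2} comes out exactly.

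The genuine gap is the second bound in \eqref{EsR4}. You correctly reduce it to showing $\|r_{[n,n+1]}'\|_{{\rm H}^\infty(\C_+)}\le 1$, but you then assert this "can be read off from \eqref{polynom}--\eqref{polynom1}" after a maximum-principle reduction to the imaginary axis. That reduction is fine (since $r_{[n,n+1]}'$ is rational, holomorphic on $\C_+$, and vanishes at infinity), but the boundary inequality $|r_{[n,n+1]}'(it)|\le 1$, i.e. $|P_n'Q_{n+1}-P_nQ_{n+1}'|\le|Q_{n+1}|^2$ on $i\R$, is precisely the hard content: it is the statement that the \emph{derivative} of the Pad\'e approximant is again $\mathcal A$-stable ($\mathcal A$-acceptable), which is a non-trivial theorem — in general, derivatives of $\mathcal A$-acceptable rational functions need not be $\mathcal A$-acceptable. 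The paper does not prove it either; it cites Iserles and N\"orsett \cite[p.~334]{Iserles1}, a paper devoted to exactly this question. Your alternative route (differentiating Perron and using \eqref{qqq}) also does not close the gap: on all of $\C_+$ the factors $|z|^{2n+1}$, $|z|^{2n+2}$ are unbounded, and the lower bound $|Q_{n+1}|\ge 1$ is far too weak at infinity to yield a uniform estimate, let alone with constant $2$. So either cite the Iserles--N\"orsett result as the paper does, or be prepared to supply a substantive polynomial inequality; note also that the weaker bound $\|r_{[n,n+1]}'\|_{{\rm H}^\infty(\C_+)}\le 2$ from \cite[Proposition 2]{Neubr} would only give $\|\Delta_{r_{[n,n+1]}}'\|_{{\rm H}^\infty(\C_+)}\le 3$, not the stated $2$.
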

\begin{proof}
Fix $z \in \C_+$ and $n \in \N.$
To prove (i) it suffices to recall that by e.g. \cite[Theorem 4.12]{Hairer} the function $r_{[n,n+1]}$ is $\mathcal A$-stable,
and, moreover, by \cite[p. 334]{Iserles1},
its derivative $r_{[n,n+1]}'$ is $\mathcal A$-stable as well. 
(A weaker property $\|r_{[n,n+1]}'\|_{{\rm H}^\infty(\C_+)}\le 2$  was also noted in \cite[Proposition 2]{Neubr}.)

To deduce (ii), 
note that by (\ref{AppDelta}) and \eqref{qqq}, 
\begin{align*}
\Delta_{r_{[n,n+1]}}'(z)=&-\frac{Q_{n+1}'(z)}{Q^2_{n+1}(z)}\frac{z^{2n+2}}{(2n+1)!}
\int_0^1 (1-t)^n t^{n+1}e^{-z t}\,dt\\
+&\frac{1}{Q_{n+1}(z)}\frac{(2n+2)z^{2n+1}}{(2n+1)!}
\int_0^1 (1-t)^n t^{n+1}e^{-z t}\,dt\\
-&\frac{1}{Q_{n+1}(z)}\frac{z^{2n+2}}{(2n+1)!}
\int_0^1 (1-t)^n t^{n+2}e^{-z t}\,dt,
\end{align*}
and, letting $z=x+iy,$
\begin{align*}
|\Delta_{r_{[n,n+1]}}'(z)|
\le& \frac{|z|^{2n+2}}{(2n+1)!}
\int_0^1 (1-t)^n t^{n+1}e^{-x t}\,dt\\
+&\frac{(2n+2)|z|^{2n+1}}{(2n+1)!}
\int_0^1 (1-t)^n t^{n+1}e^{-x t}\,dt\\
+&\frac{|z|^{2n+2}}{(2n+1)!}
\int_0^1 (1-t)^n t^{n+2}e^{-x t}\,dt.
\end{align*}
Hence, if $z \in \mathbb D^+_n,$ then
\begin{equation*}
|\Delta_{r_{[n,n+1]}}'(z)|
\le 2\frac{|z|^{2n+1}}{(2n)!}
\int_0^1 (1-t)^n t^{n+1}e^{-x t}\,dt. \qedhere 
\end{equation*}
\end{proof}

For $s > 0$ and $n \in \mathbb N$ let now 
\[
\Delta_{r_{[n,n+1]}, s}(z):=z^{-s}\Delta_{r_{[n,n+1]}}(z), \qquad z \in \mathbb C_+.
\] 
The following  statement provides an estimate for $\|\Delta_{r_{[n,n+1]}, s}\|_{\mathcal{B}_0}$ leading
to the operator-norm error bound 
in Theorem \ref{mainER} via the $\mathcal B$-calculus. In view of possible improvements, we formulate the result
with an explicit constant.
\begin{lemma}\label{RozB}
For all $s>0$ and $n \in \mathbb N$ such that $n\ge (s-1)/2,$ one has 
\begin{equation}\label{BfR}
\|\Delta_{r_{[n,n+1]}, s}\|_{\mathcal{B}_0}\le 
\left\{\frac 12 +\frac{4}{s}
+\frac{s}{2}+\frac{4\pi s}{(s+1)\sin(\pi/(1+s))}
+4\log(2n+1)
\right\}\frac{1}{n^s}.
\end{equation}
\end{lemma}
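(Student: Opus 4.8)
The plan is to write $\Delta_{r_{[n,n+1]},s}=z^{-s}\Delta_{r_{[n,n+1]}}$, differentiate, and reduce everything to the defining integral of the $\mathcal{B}_0$-seminorm \eqref{bdef0}. From the product rule,
\[
|\Delta_{r_{[n,n+1]},s}'(z)|\le \frac{|\Delta_{r_{[n,n+1]}}'(z)|}{|z|^{s}}+s\,\frac{|\Delta_{r_{[n,n+1]}}(z)|}{|z|^{s+1}},\qquad z=x+iy\in\C_+,
\]
so that $\|\Delta_{r_{[n,n+1]},s}\|_{\mathcal{B}_0}=\int_0^\infty\sup_{y\in\R}|\Delta_{r_{[n,n+1]},s}'(x+iy)|\,dx$ splits into a derivative-term and a value-term. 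I would then split the half-plane into the disc $\mathbb{D}^+_n=\{|z|<n\}$, where the refined bounds \eqref{EsR1}, \eqref{EsR2} apply, and its complement $\{|z|\ge n\}$, where only the crude estimates \eqref{EsR4} are available. Since $n\ge(s-1)/2$ forces $2n+1-s\ge0$, the radius $n$ is the natural cut-off: it is precisely the range of validity of \eqref{EsR2} and the range in which \eqref{inequal} keeps the factorial prefactors controlled.

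On $\mathbb{D}^+_n$ both contributions are essentially free. For the value-term, \eqref{EsR1} gives $s|z|^{-s-1}|\Delta_{r_{[n,n+1]}}(z)|\le \tfrac s2\,|z|^{2n+1-s}(n+1)^{-2(n+1)}$; taking $\sup_y$ at $|z|=n$, integrating over $x\in(0,n)$, and using $(n/(n+1))^{2n+2}\le1$ yields exactly the summand $\tfrac s2\,n^{-s}$. For the derivative-term, \eqref{EsR2} combined with the Beta-integral $\int_0^1(1-t)^n t^{n}\,dt=(n!)^2/(2n+1)!$ and \eqref{inequal}/Stirling shows this piece is of order $(e^2/16)^n\,n^{-s}$, hence absorbed into the harmless constant $\tfrac12\,n^{-s}$. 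Thus all of $\{|z|<n\}$ costs at most $(\tfrac12+\tfrac s2)n^{-s}$.

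The real work is on $\{|z|\ge n\}$. For the value-term one uses $|\Delta_{r_{[n,n+1]}}|\le2$ from \eqref{EsR4}; after taking $\sup_{y:\,|z|\ge n}|z|^{-s-1}$ and rescaling $x=nu$, standard integrals of the type $\int_0^\infty du/(1+u^{1+s})=\tfrac{\pi/(1+s)}{\sin(\pi/(1+s))}$ and $\int_1^\infty u^{-s-1}\,du=1/s$ are what generate the constants $\tfrac4s$ and $\tfrac{4\pi s}{(s+1)\sin(\pi/(1+s))}$. The genuinely delicate point is the derivative-term on $\{|z|\ge n\}$: here the crude bound $|\Delta_{r_{[n,n+1]}}'|\le2$ is useless, since $\int_0^n\sup_{y:\,|z|\ge n}2|z|^{-s}\,dx$ only gives $O(n^{1-s})$, off by a full power of $n$. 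This is the main obstacle, and it is where the logarithm has to come from.

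To beat the spurious $O(n^{1-s})$ I would return to the Perron representation \eqref{AppDelta} and exploit that the weight $g(t)=(1-t)^n t^{n+1}$ vanishes to high order at both endpoints, so that repeated integration by parts in $\int_0^1 g(t)e^{-zt}\,dt$ produces honest decay in $|z|$ — decay that the modulus estimate $|e^{-zt}|\le e^{-xt}$ destroys, and which is precisely the oscillatory cancellation for large $|{\rm Im}\,z|$ that the crude bound misses — combined with the growth $|Q_{n+1}(z)|\gtrsim|z|^{n+1}$ for $|z|\gtrsim n$ furnished by \eqref{qqq} and \eqref{polynom1}. The resulting estimate for $\Delta_{r_{[n,n+1]}}'$ decays like a negative power of $|z|$ away from the poles but degrades across the transition annulus $|z|\asymp n$; integrating it over $x$ there contributes a $\log(2n+1)$, in direct analogy with the Cayley-transform bound \eqref{CJFA1}, where $\|v^n\|_{\mathcal{B}_0}\le2+2\log(2n)$ arises from exactly the same $1/x$-behaviour on $[a_{n,0},b_{n,0}]$. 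Pinning the coefficient of this logarithm to $4$ is the part I expect to be most delicate: one must make the transition-annulus estimate uniform in $n$ and track constants carefully. Collecting the five pieces then assembles the right-hand side of \eqref{BfR}.
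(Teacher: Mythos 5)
Your overall architecture (product-rule split of $\Delta_{r_{[n,n+1]},s}'$, cut at $|z|=n$, the bounds \eqref{EsR1}--\eqref{EsR2} inside the disc, crude bounds outside) matches the paper's proof, and your treatment of three of the four pieces is correct --- including the observation that the inner derivative-term is exponentially small, of order $(e^2/16)^n n^{-s}$, which is exactly the content of \eqref{labb}. But the one piece you yourself single out as ``the real work'' --- the derivative-term on $\{|z|\ge n\}$, the sole source of the logarithm --- rests on a false premise. You propose to obtain, via integration by parts in the Perron integral \eqref{AppDelta} together with the lower bound $|Q_{n+1}(z)|\gtrsim |z|^{n+1}$, a pointwise estimate in which $\Delta_{r_{[n,n+1]}}'$ ``decays like a negative power of $|z|$ away from the poles''. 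No such estimate can hold: since $\Delta_{r_{[n,n+1]}}'(z)=r_{[n,n+1]}'(z)+e^{-z}$ and $r_{[n,n+1]}'(z)\to 0$ as $|z|\to\infty$, one has $|\Delta_{r_{[n,n+1]}}'(x+iy)|\to e^{-x}$ as $|y|\to\infty$, so $|\Delta_{r_{[n,n+1]}}'|\approx 1$ on the whole region $\{0<x\lesssim 1,\ |y|\ge n\}\subset\{|z|\ge n\}$. The oscillatory decay of $\int_0^1(1-t)^nt^{n+1}e^{-zt}\,dt$ in $|{\rm Im}\,z|$ that you want to exploit is exactly cancelled by the growth of the prefactor $z^{2n+2}/((2n+1)!\,Q_{n+1}(z))$, which is of size $\asymp |z|^{n+1}/n!$ there; the net result is $O(1)$, i.e.\ no better than the crude bound you are trying to improve.

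What actually closes the argument --- and this is the paper's key step --- is decay in the real part $x$ alone, uniform in $n$: since $\|\Delta_{r_{[n,n+1]}}\|_{{\rm H}^\infty(\C_+)}\le 2$, a Cauchy/Schwarz--Pick estimate on the disc $\{w:|w-z|<x\}$ gives $|\Delta_{r_{[n,n+1]}}'(z)|\le \min\{2,x^{-1}\}\le 2/(x+1/2)$. Feeding this into the integral, with $2|z|^s\ge n^s+x^s$ on $\{|z|\ge n\}$, yields $\int_0^\infty \frac{4\,dx}{(n^s+x^s)(1/2+x)}\le \frac{4}{n^s}\bigl(\log(2n+1)+\frac1s\bigr)$: the logarithm is the $1/x$-accumulation over the strip $\{1\lesssim x\lesssim n,\ |y|\gtrsim n\}$, not a contribution of a transition annulus $|z|\asymp n$. (This also shows that the constant $4/s$ belongs to the derivative-term: your value-term carries the prefactor $s$ from the product rule, so its $\int_1^\infty u^{-s-1}\,du=1/s$ piece produces an $O(1)$ constant, not $4/s$.) Without replacing your Perron/integration-by-parts step by an $x$-decay estimate of this type, the $O(n^{1-s})$ obstruction you correctly identified remains, and the proof does not go through.
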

\begin{proof}
Fix $n \in \mathbb N$ and $s \in (0,\infty)$ satisfying $2n+1\ge s>0.$
We write
\begin{equation}\label{AB}
\Delta_{r_{[n,n+1]}, s}'(z)=-s\frac{\Delta_{r_{[n,n+1]}}(z)}{z^{s+1}}+\frac{\Delta_{r_{[n,n+1]}}'(z)}{z^s},\qquad z\in \C_{+},
\end{equation}
and estimate the $\mathcal B_0$-norm of each of the terms in the right hand side of \eqref{AB} separately.

Let $z=x+iy\in \C_{+}.$
Then, observing that
 $|z|\ge n$ implies 
\[
2|z|^{s+1}\ge |z|^{s+1}+n^{s+1}\ge x^{s+1}+n^{s+1},
\]
and using \eqref{EsR1}, we have
\begin{equation}\label{first}
\begin{aligned}
&\int_0^\infty \sup_{y\in \R}\,\frac{|\Delta_{r_{[n,n+1]}}(z)|}{|z|^{s+1}}\,dx\\
\le& \int_0^\infty \sup_{y\in \R,\,|z|\le n}\,\frac{|\Delta_{r_{[n,n+1]}}(z)|}{|z|^{s+1}}\,dx
+\int_0^\infty \sup_{y\in \R,\,|z|\ge n}\,\frac{|\Delta_{r_{[n,n+1]}}(z)|}{|z|^{s+1}}\,dx%\notag
 \\
\le& \frac{1}{2n^{s+1}} \int_0^n \,dx
+ 4\int_0^\infty \frac{dx}{x^{s+1}+n^{s+1}}%\notag 
\\
\le& \left(\frac{1}{2}
+\frac{4\pi}{(s+1)\sin(\pi/(1+s))}\right)n^{-s},%\notag
\end{aligned}
\end{equation}
since by \cite[p. 295]{Prud},
\[
\int_0^\infty \frac{dx}{x^{s+1}+1}
=\frac{\pi}{(s+1)\sin(\pi/(1+s))}.
\]
Next, in view of \eqref{EsR4}, we infer that
\[
|(\Delta_{r_{[n,n+1]}}(z))'|\le x^{-1} \|\Delta_{r_{[n,n+1]}}\|_{{\rm H}^\infty(\mathbb C_+)} \le \min\left\{2, x^{-1}\right\}\le
\frac{2}{x+1/2}
\]
for all $z \in \C_+.$  Hence, employing \eqref{EsR2},
\begin{equation}\label{labee}
\begin{aligned}
&\int_0^\infty \sup_{y\in \R}\,\frac{|\Delta_{r_{[n,n+1]}}'(z)|}{|z|^{s}}\,dx\\
\le& \int_0^\infty \sup_{y\in \R,\,|z|\le n}\,\frac{|\Delta_{r_{[n,n+1]}}'(z)|}{|z|^{s}}\,dx
+\int_0^\infty \sup_{y\in \R,\,|z|\ge n}\,\frac{|\Delta_{r_{[n,n+1]}}'(z)|}{|z|^{s}}\,dx%\notag
 \\
\le&\frac{2 n^{2n+1-s}}{(2n)!}
\int_0^1 (1-t)^n t^{n+1} \int_0^n  e^{-xt}   \,dx\,dt
+\int_0^\infty \frac{4\, dx}{(n^s+x^s)(1/2+x)}%\notag
 \\
\le&  \frac{2(n!)^2 n^{2n+1-s}}{(2n)!(2n+1)!} 
+\frac{4}{n^s}\int_0^\infty \frac{dx}{(1+x^s)(1/(2n)+x)},%\notag
\end{aligned}
\end{equation}
where we also used that
\[
\int_0^1(1-t)^nt^n\,dt=\frac{(n!)^2}{(2n+1)!}.
\]
Since
\begin{align*}
\int_0^\infty \frac{dx}{(1+x^s)(1/(2n)+x)}\le&\int_0^1\frac{dx}{x+1/(2n)}+
\int_1^\infty \frac{dx}{x^{s+1}}\\
=&\log(2n+1)+\frac{1}{s},
\end{align*}
and
\begin{equation}\label{labb}
\frac{2(n!)^2 n^{2n+1-s}}{(2n)!(2n+1)!} \le \frac{1}{2n^s},
\end{equation}
from \eqref{labee} it follows that
\begin{align}\label{second}
\int_0^\infty \sup_{y\in \R}\,\frac{|\Delta_{r_{[n,n+1]}}'(z)|}{|z|^{s}}\,dx
\le \left(\frac 12+\log(2n+1)+\frac{1}{s}\right)\frac{4}{n^s}. 
\end{align}
Now \eqref{AB}, \eqref{first} and \eqref{second} imply \eqref{BfR}.
\end{proof}

We do not know whether $\log(n+1)$ in \eqref{BfR} can be omitted.
This would follow from the estimate $\sup_{n \ge 1}\|r_{[n, n+1]}\|_{\mathcal B_0}<\infty.$
%(leading to the strong convergence of the first subdiagonal Pad\'e approximations
%of operator semigroups).
 Unfortunately, at the moment, this bound is out of reach as well. 

\section{The $\mathcal{B}$-calculus revisited and 
operator-norm estimates}\label{fcc}

\subsection{Construction of the $\mathcal B$-calculus}

As far as this paper relies on the $\mathcal B$-calculus
we provide  a streamlined approach  to its
construction, different from the one developed in \cite{BaGoTo} and \cite{BaGoTo1},
and thus make the paper essentially self-contained.
 In contrast to \cite{BaGoTo} and \cite{BaGoTo1},
the construction is based on the reproducing formula for the space $\mathcal B,$
and does not depend on any advanced function-theoretical machinery.

To start the construction, for $\lambda, z \in \mathbb C_+,$   write $\lambda=\alpha+i\beta$, 
and let
\[
K(z,\lambda):=-\frac{2}{\pi(z+\lambda)^2}, \qquad D_\lambda:=\frac{d}{d\lambda},
\qquad \text{and}\qquad
dS(\lambda):=\alpha \, d\beta \, d\alpha.
\]

Recall that
if $f\in \mathcal{B}$ then by \cite[Proposition 2.20]{BaGoTo}
$f$ can be recovered by the  reproducing formula 
\begin{equation}\label{A0}
f(z)=f(\infty)+\int_{\C_{+}} K(z,\overline{\lambda}) D_\lambda f(\lambda)\,dS(\lambda),\qquad z\in \C_{+},
\end{equation}
where the integral converges absolutely for every $z \in \C_+$ in view of  
\[
\sup_{\alpha >0} \alpha \int_{\mathbb R} |K(z,\alpha -i\beta)| \, d\beta <\infty 
\]
and Fubini's theorem.
The formula  \eqref{A0} was obtained initially in \cite[Proposition 2.20]{BaGoTo} in a complicated manner, and it was
reproved by elementary means in \cite[Theorem 2.3]{BaGoTo1} and also in \cite[Corollary 3.10 and Proposition 3.15]{BaGoTo2}.

Following an established route in 
the case of classical Riesz-Dunford calculus, given an operator $A$ on a Hilbert space $X,$
it is natural to  try to define $f(A)$ for $f \in \mathcal B$ by plugging
$A$ instead of independent variable $z$ into  \eqref{A0}, and thus creating an operator counterpart of \eqref{A0}.
In what follows, we formalise this procedure and show that it leads indeed to a well-defined functional calculus.
We say that an operator $A$ admits a $\mathcal B$-calculus $\Phi$ if $A$ is densely defined,
$\sigma(A)  \subset \overline{\mathbb C}_+,$ and there is a bounded algebra homomorphism $\Phi : \mathcal B \to
L(X)$ such that $\Phi((\lambda +\cdot)^{-1}) = (\lambda + A)^{-1}$ for all $\lambda \in \mathbb C_+.$
Note that if $\Phi$ is a $\mathcal B$-calculus for $A,$ then $\Phi(1)=I,$ see \cite[p. 33]{BaGoTo1}.
If $A$ admits a $\mathcal B$-calculus $\Phi,$ then we set $f(A):=\Phi(f), f \in \mathcal B.$

Note that if $-A$ generates a bounded $C_0$-semigroup $(e^{-tA})_{t \ge 0}$ on $X,$ with $\sup_{t \ge 0}\|{e^{-tA}}\|:=M,$ then 
Plancherel's theorem implies that for every $x \in X$  the $L^2(\R, X)$-norms of $\sqrt \alpha  
(\alpha+i\cdot+A)^{-1} x$ 
and $\sqrt \alpha \, (\alpha-i\cdot+A^*)^{-1} x$ are  
bounded by $\sqrt \pi M\|x\|$ uniformly in $\alpha >0.$ 
 Using this fact,
one infers easily that 
\begin{equation}\label{GSF}
\sup_{\alpha >0} \int_{\mathbb R}\alpha |\langle K(A,\alpha -i\beta) x, x^*\rangle| \, d \beta \le \pi M^2 \|x\|\|x^*\|
\end{equation}
for all $x, x^* \in X.$ (See \cite[Example 4.1]{BaGoTo} for a discussion of \eqref{GSF} 
and relevant references.)

From \eqref{GSF} it follows that the formula
\begin{equation}\label{OperA}
\langle f(A)x,x^*\rangle:=f(\infty)I+\int_{\C_{+}} \langle K(A,\overline{\lambda})x,x^*\rangle\,D_\lambda f(\lambda)\,dS(\lambda),  
\end{equation}
for all $x\in X$ and $x^*\in X,$ defines a bounded linear operator $f(A)$ on $X,$ and moreover 
\begin{equation}\label{b_est}
\|f(A)\|\le |f(\infty)|+\pi M^2\|f\|_{\mathcal B_0}\le \pi M^2 \|f\|_{\mathcal B}.
\end{equation}
Thus, we have a well-defined, bounded linear mapping
\begin{equation}\label{phia}
 \Phi_A:\mathcal B \to L(X),
\qquad \Phi_A(f):=f(A).
\end{equation}
 Moreover,   
as a simple calculation shows (see \cite[p. 42 and Lemma 4.2]{BaGoTo}),
 if $f=\widehat \nu$ with $\nu \in {\rm M}(\R_+),$
and  $f(A)$ is given by \eqref{OperA},
then
\[
f(A)x=\int_{0}^\infty e^{-tA}x\, d\nu(t), \qquad x \in X,
\]
so that $\Phi_A$ extends the HP-calculus, $\|\widehat \nu(A)\|\le \pi M^2 \|\widehat \nu\|_{\mathcal B},$ and  $\Phi_A((\cdot+\lambda)^{-1})=(\lambda+A)^{-1}, \lambda \in \C_+.$
%(the latter can also be established by Cauchy's formula). 
 This fact was a starting point in \cite{BaGoTo}, and it will also be useful below.

If $\Phi_A$ is multiplicative, then  $\Phi_A$ is a functional calculus by the definition above. 
It would clearly coincide with the $\mathcal B$-calculus  constructed in \cite{BaGoTo}, since both calculi are given by the same formula \eqref{OperA}.
Having defined $f(A)$ by \eqref{OperA}, the argument in \cite{BaGoTo} proceeds with showing  the homomorphism
property of $\Phi_A.$ This is the main step of the construction, and it is rather involved.
Note that $\mathcal {LM}(\C_+)$ is not dense in $\mathcal B.$ So the approach in \cite{BaGoTo} relies on 
finding convenient dense sets $\mathcal G$ in $\mathcal B$ and using them to show that $\mathcal{LM}(\C_+)$ is dense in $\mathcal B$
in a suitable weak topology. Then the homomorphism property of $\Phi_A$ is established via several approximation unit arguments. The formula \eqref{A0} is not used explicitly, and $\mathcal G$  is created 
via Arveson's spectral theory for isometric groups applied to a $C_0$-group of vertical shifts on $\mathcal B.$

We give a simple alternative proof for the homomorphism property of $\Phi_A,$
where a mere validity of \eqref{A0} will allow us to set-up a functional calculus  for $A.$
The next identity, allowing one to separate the roles of $f$ and $g$ in  $(fg)(A),$ is the heart matter of our approach.
\begin{lemma}
Let $f,g\in \mathcal{B}_0.$ Then for all $x \in X$ and $x^* \in X,$ 
\begin{equation}\label{A2}
\langle (f g)(A)x,x^*\rangle =
\int_{\C_{+}} (D_{\mu}g)(\mu)\,
\langle [K(\cdot,\overline{\mu})f](A)x,x^*\rangle \,dS(\mu).
\end{equation}
\end{lemma}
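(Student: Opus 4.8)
The plan is to read \eqref{A2} as the operator incarnation of the trivial factorisation $fg=f\cdot g$ in which $g$ is replaced by its reproducing formula \eqref{A0}. First I would record that all the relevant functions lie in $\mathcal B_0$, so that \eqref{OperA} applies to each with vanishing value-at-infinity term. Indeed $\mathcal B$ is a Banach algebra and, for fixed $\mu\in\C_+$, $K(\cdot,\overline{\mu})=-\tfrac2\pi(\cdot+\overline{\mu})^{-2}\in\mathcal{LM}(\C_+)\subset\mathcal B$ vanishes at infinity; hence both $fg$ and $K(\cdot,\overline{\mu})f$ belong to $\mathcal B_0$, and so do $g_n$ and $fg_n$ introduced below.

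The structural skeleton is then short. Expanding the inner operator on the right of \eqref{A2} by \eqref{OperA},
\begin{equation*}
\langle[K(\cdot,\overline{\mu})f](A)x,x^*\rangle=\int_{\C_+}\langle K(A,\overline{\lambda})x,x^*\rangle\,D_\lambda\bigl[K(\lambda,\overline{\mu})f(\lambda)\bigr]\,dS(\lambda),
\end{equation*}
and, interchanging the $\lambda$- and $\mu$-integrations, one is left with the inner integral $\int_{\C_+}(D_\mu g)(\mu)\,D_\lambda[K(\lambda,\overline{\mu})f(\lambda)]\,dS(\mu)$. Differentiating \eqref{A0} for $g$ under the integral sign (legitimate because $\int_{\R}|D_\lambda K(\lambda,\overline{\mu})|\,d(\operatorname{Im}\mu)$ is a constant multiple of $(\operatorname{Re}\lambda+\operatorname{Re}\mu)^{-2}$, which together with $g\in\mathcal B_0$ yields a bound locally uniform in $\lambda$), this inner integral equals $D_\lambda[f(\lambda)g(\lambda)]=(fg)'(\lambda)$. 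Feeding this back into \eqref{OperA} for $fg$ produces exactly $\langle(fg)(A)x,x^*\rangle$. Thus the entire proof reduces to a single Fubini step (one could equally substitute \eqref{A0} into the left-hand side and swap).

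The hard part is precisely justifying that interchange, and it is \emph{not} a matter of routine Tonelli. The natural bounds $|K(\lambda,\overline{\mu})|\le\tfrac2{\pi|\lambda+\overline{\mu}|^2}$ and $|D_\lambda K(\lambda,\overline{\mu})|\le\tfrac4{\pi|\lambda+\overline{\mu}|^3}$ combined with \eqref{GSF} control the $\lambda$-integral only up to a factor $(\operatorname{Re}\mu)^{-2}$, and since $\int_{\C_+}(\operatorname{Re}\mu)^{-1}\int_{\R}|g'(\mu)|\,d(\operatorname{Im}\mu)\,d(\operatorname{Re}\mu)$ diverges in general (already for $g=(\cdot+1)^{-1}\in\mathcal B_0$), the double integral fails to be absolutely convergent; in fact the operator integral in \eqref{A2} converges only conditionally, and I would read its right-hand side as the limit over an exhaustion. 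Accordingly I would truncate $\mu$ to compact sets $\Omega_n\Subset\C_+$ with $\Omega_n\uparrow\C_+$ and put $g_n(\lambda):=\int_{\Omega_n}K(\lambda,\overline{\mu})(D_\mu g)(\mu)\,dS(\mu)$. Over $\C_+\times\Omega_n$ the iterated integrals are honestly absolutely convergent (on $\Omega_n$ the factor $(\operatorname{Re}\mu)^{-2}$ is bounded and $g'$ continuous, while the remaining $\lambda$-integrals are finite by \eqref{GSF} and $f\in\mathcal B$), so Fubini applies and the computation above gives, for every $n$,
\begin{equation*}
\int_{\Omega_n}(D_\mu g)(\mu)\,\langle[K(\cdot,\overline{\mu})f](A)x,x^*\rangle\,dS(\mu)=\langle(fg_n)(A)x,x^*\rangle .
\end{equation*}

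It then remains to let $n\to\infty$. The left side converges to the integral in \eqref{A2} by construction, so the crux is $\langle(fg_n)(A)x,x^*\rangle\to\langle(fg)(A)x,x^*\rangle$. Using \eqref{OperA} and the key bound \eqref{GSF} one obtains
\begin{equation*}
\bigl|\langle(fg_n)(A)x,x^*\rangle-\langle(fg)(A)x,x^*\rangle\bigr|\le\pi M^2\|x\|\,\|x^*\|\int_0^\infty\sup_{\beta\in\R}\bigl|(fg_n-fg)'(\alpha+i\beta)\bigr|\,d\alpha,
\end{equation*}
so it suffices to prove $\|fg_n-fg\|_{\mathcal B_0}\to0$, equivalently $g_n\to g$ in $\mathcal B$ (as $\|fg_n-fg\|_{\mathcal B}\le\|f\|_{\mathcal B}\|g_n-g\|_{\mathcal B}$). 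This norm convergence of the truncated reproducing formula on $\mathcal B_0$ is the genuine obstacle; I expect to extract it from a uniform bound $\sup_n\|g_n\|_{\mathcal B}<\infty$ together with the locally uniform convergence $g_n\to g$ furnished by \eqref{A0}, with dominated convergence effecting the passage and the region $\operatorname{Re}\mu\to0^+$ being the delicate one. Everything else — the membership statements, the differentiation under the integral sign, and the recognition of the inner integral as $(fg)'$ — is routine.
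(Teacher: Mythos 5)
Your structural skeleton is the same as the paper's---substitute the reproducing formula \eqref{A0} for $g$ into \eqref{OperA} applied to $fg$, recognize the inner integral, and swap the $\lambda$- and $\mu$-integrations---but your analysis of the swap is wrong, and the error is precisely at the step the whole lemma turns on. The four-fold integral \emph{is} absolutely convergent, so plain Fubini applies and no truncation or ``conditional'' reading of \eqref{A2} is needed. Your contrary conclusion comes from a lossy majorization: you bound $|D_\lambda K(\lambda,\overline{\mu})|$ by $4/(\pi(\operatorname{Re}\lambda+\operatorname{Re}\mu)^3)$ \emph{uniformly} in $\operatorname{Im}\mu$, discarding the kernel's decay in $|\operatorname{Im}\lambda-\operatorname{Im}\mu|$, and you are then forced to integrate $|g'|$ in $\operatorname{Im}\mu$---a quantity the $\mathcal{B}_0$-norm does not control---whence your divergent majorant. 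The paper's proof majorizes in the opposite order: it bounds $|g'(t+is)|$ by $\sup_{s\in\R}|g'(t+is)|$ (exactly what $\|g\|_{\mathcal{B}_0}$ controls) and integrates the \emph{kernel} in $s=\operatorname{Im}\mu$, using $\int_{\R}\bigl((\alpha+t)^2+(\beta-s)^2\bigr)^{-3/2}ds=2(\alpha+t)^{-2}$; combined with \eqref{GSF} this gives $L(t)\le 4\pi M^2\|f\|_{\mathcal{B}}\,t^{-1}\|x\|\|x^*\|$, and the factor $t$ in $dS(\mu)=t\,ds\,dt$ cancels $t^{-1}$, so the integral of absolute values is at most $4\pi M^2\|f\|_{\mathcal{B}}\|g\|_{\mathcal{B}_0}\|x\|\|x^*\|$. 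A divergent \emph{crude} bound does not establish non-absolute convergence; the sharper bound shows the opposite.

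Moreover, the fallback you propose cannot be repaired as stated. Your limit passage needs $\|g_n-g\|_{\mathcal{B}}\to 0$ for the truncations $g_n=\int_{\Omega_n}K(\cdot,\overline{\mu})(D_\mu g)(\mu)\,dS(\mu)$. Each $g_n$ is a Bochner integral, over a compact subset of $\C_+$, of the kernels $K(\cdot,\overline{\mu})\in\mathcal{LM}(\C_+)$, whose $\mathcal{LM}$-norms equal $2/(\pi(\operatorname{Re}\mu)^2)$ and hence are bounded on $\Omega_n$; thus $g_n\in\mathcal{LM}(\C_+)\cap\mathcal{B}_0$. If your convergence held for every $g\in\mathcal{B}_0$, then $\mathcal{LM}(\C_+)\cap\mathcal{B}_0$ would be norm-dense in $\mathcal{B}_0$, and, since constants lie in $\mathcal{LM}(\C_+)$ and $f\mapsto f-f(\infty)$ is a bounded projection of $\mathcal{B}$ onto $\mathcal{B}_0$, the algebra $\mathcal{LM}(\C_+)$ would be norm-dense in $\mathcal{B}$---contradicting the non-density of $\mathcal{LM}(\C_+)$ in $\mathcal{B}$ recorded at the start of Section \ref{fcc}. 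So the norm convergence you ``expect to extract'' is false in general, not merely delicate. What would suffice is the weaker convergence $\langle (fg_n)(A)x,x^*\rangle\to\langle (fg)(A)x,x^*\rangle$, but dominating the integrands of \eqref{OperA} uniformly in $n$ requires precisely the two-variable kernel estimate above, i.e.\ the paper's absolute-convergence computation---at which point the truncation is superfluous.
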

\begin{proof}
Applying (\ref{OperA}) to $fg$ and  (\ref{A0}) to $g,$
and 
interchanging the integration order formally,
we infer that for all $x\in X$ and $x^*\in X,$ 
\begin{equation}\label{A1}
\begin{aligned}
&\langle(f g)(A)x,x^*\rangle
\\
=&\int_{\C_{+}} \langle K(A,\overline{\lambda})x,x^*\rangle D_\lambda\left(
f(\lambda)
\int_{\C_{+}}
K(\lambda,\overline{\mu})
D_\mu g(\mu)dS(\mu)\right)dS(\lambda)\\%\notag \\
=&\int_{\C_{+}} \langle K(A ,\overline{\lambda})x,x^*\rangle \left(
\int_{\C_{+}} D_\lambda(K(\lambda,\overline{\mu})f(\lambda))
D_\mu g(\mu)dS(\mu)\right)dS(\lambda)\\%\notag \\
=&\int_{\C_{+}}D_{\mu}g(\mu)
\int_{\C_{+}}
\langle K(A,\overline{\lambda})x,x^*\rangle
D_{\lambda}
\left(K(\lambda,\overline{\mu})f(\lambda)\right)\,dS(\lambda)\,dS(\mu).%\notag
\end{aligned}
\end{equation}
If the integrals in \eqref{A1} converge absolutely,
then Fubini's theorem makes the above argument rigorous, and the relation \eqref{A2} follows
from (\ref{A1}) and (\ref{OperA}).
Thus it remains to show the absolute convergence. 

To this aim, letting
$\mu=t+is$ with $t>0$ and $s \in \mathbb R,$
and assuming that $\|x\|=\|x^*\|=1,$ observe that 
\begin{equation}\label{ApA}
\begin{aligned}
\int_{\C_{+}}|D_{\mu}g(\mu)|&
\int_{\C_{+}}
|\langle K(A,\overline{\lambda})x,x^*\rangle|
|D_{\lambda}
\left(K(\lambda,\overline{\mu})f(\lambda)\right)|\,dS(\lambda)\,dS(\mu)\\
\le& \int_0^\infty \sup_{s\in \R}\,|g'(t+is)|
tL(t)\,dt, %\notag
\end{aligned}
\end{equation}
where, for every $t > 0,$
\begin{align*}
L(t)
=&\int_{\R} \int_{\C_{+}} |\langle K(A,\overline{\lambda})x,x^*\rangle|
|D_{\lambda} \left(K(\lambda,t-is)f(\lambda)\right)|\,dS(\lambda)\,ds \\
=&\int_{\C_{+}} |\langle K(A,\overline{\lambda})x,x^*\rangle|
\left( \int_{\mathbb R}
|D_{\lambda}
\left(K(\lambda,t-is)f(\lambda)\right)|ds\right)\,dS(\lambda).
\end{align*}
Next, recalling that $\lambda=\alpha+i\beta,$ where $\alpha>0,\beta \in \mathbb R,$ 
we have 
\begin{align*}
&\int_{\R}
|D_{\lambda}
(K(\lambda,t-is)f(\lambda))|\,ds\\
&\le 2|f(\lambda)|\int_{\mathbb R}\frac{ds}{|\alpha+i\beta+t-is|^3}+
|f'(\lambda)|\int_{\mathbb R}\frac{ds}{|\alpha+i\beta+t-is|^2}\\
&\le 4\left(\frac{|f(\lambda)|}{(\alpha+t)^2}+
\frac{|f'(\lambda)|}{\alpha+t}\right), \qquad t >0.
\end{align*}

So, for all $t > 0,$
\begin{align*}
\frac{L(t)}{4}\le& \int_0^\infty \alpha
\sup_{\beta\in \R}\,\left(
\frac{|f(\lambda)|}{(\alpha+t)^2}
+\frac{|f'(\lambda)|}{\alpha+t}
\right)
\int_{\R}
|\langle K(A,\alpha-i\beta)x,x^*\rangle|
d\beta d\alpha\\
\le& \pi M^2
\int_0^\infty
\left(
\frac{1}{(\alpha+t)^2}\sup_{\beta\in\R}\,|f(\alpha+i\beta)|
+\frac{1}{\alpha+t}\sup_{\beta\in \R}\,|f'(\alpha+i\beta)|\right)
d\alpha\\
\le&t^{-1}
\pi M^2
\|f\|_{\mathcal{B}}.
\end{align*}
This yields the absolute convergence of the left-hand side of \eqref{ApA} and  finishes the proof.
\end{proof}

Now we are ready to prove the multiplicativity of $\Phi_A.$
\begin{prop}\label{LMA}
Let $f\in \mathcal{B}$ and $g\in \mathcal{B}$.
Then
\begin{equation}\label{MainA}
f(A)g(A)=(f g)(A).
\end{equation}
\end{prop}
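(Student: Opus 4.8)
The plan is to prove multiplicativity $f(A)g(A)=(fg)(A)$ by first reducing to the case $f,g\in\mathcal{B}_0$, where the key identity \eqref{A2} is available, and then extending to all of $\mathcal{B}$ using the decomposition $f=f(\infty)\cdot 1+(f-f(\infty))$ together with the fact that $\Phi_A(1)=I$. First I would observe that for $f,g\in\mathcal{B}_0$ the right-hand side of \eqref{A2} invites a further application of the defining formula \eqref{OperA}: the inner object $[K(\cdot,\overline\mu)f](A)$ is itself defined by the reproducing-formula calculus, and one wants to recognize the $\mu$-integral as reconstructing $g(A)$ applied after $f(A)$. Concretely, I expect to rewrite the expression $\langle[K(\cdot,\overline\mu)f](A)x,x^*\rangle$ so that the $\mu$-dependence is exposed, and then integrate against $(D_\mu g)(\mu)\,dS(\mu)$; comparing with \eqref{A0} applied to $g$ (and using $g(\infty)=0$) should collapse this to $\langle f(A)g(A)x,x^*\rangle$, thereby identifying \eqref{A2} with the product $f(A)g(A)$.

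The cleanest route is probably to exploit the resolvent-type structure of the kernel $K(z,\lambda)=-\tfrac{2}{\pi(z+\lambda)^2}$. Since $K(\cdot,\overline\mu)$ is, up to a constant, the derivative of a resolvent, namely $(z+\overline\mu)^{-2}=-D_z(z+\overline\mu)^{-1}$, and since $\Phi_A$ sends $(\cdot+\lambda)^{-1}$ to $(A+\lambda)^{-1}$, one can hope to interpret $[K(\cdot,\overline\mu)f](A)$ via the resolvent calculus. I would therefore aim to show that $[K(\cdot,\overline\mu)f](A)=K(A,\overline\mu)f(A)$ as bounded operators for each fixed $\mu\in\mathbb{C}_+$ — this is the crucial algebraic compatibility between the calculus and multiplication by the fixed "test kernel" $K(\cdot,\overline\mu)$. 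Granting this, \eqref{A2} becomes
\[
\langle(fg)(A)x,x^*\rangle=\int_{\mathbb{C}_+}(D_\mu g)(\mu)\,\langle K(A,\overline\mu)f(A)x,x^*\rangle\,dS(\mu),
\]
and applying the operator reproducing formula \eqref{OperA} to the function $g$ evaluated at $A$ but with $f(A)x$ in place of $x$ (again using $g(\infty)=0$) yields exactly $\langle g(A)f(A)x,x^*\rangle$, giving \eqref{MainA} on $\mathcal{B}_0$.

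The main obstacle I anticipate is justifying the identity $[K(\cdot,\overline\mu)f](A)=K(A,\overline\mu)f(A)$, i.e. that multiplying $f$ by the specific fixed function $K(\cdot,\overline\mu)\in\mathcal{B}_0$ corresponds on the operator side to composing with $K(A,\overline\mu)$. This is a restricted instance of the very multiplicativity we are proving, so care is needed to avoid circularity; the point is that $K(\cdot,\overline\mu)$ is built from resolvents $(\cdot+\overline\mu)^{-1}$, for which the homomorphism property is essentially built into the definition of the $\mathcal{B}$-calculus (via $\Phi_A((\cdot+\lambda)^{-1})=(A+\lambda)^{-1}$ and the resolvent identity), so this step should be provable directly without invoking the general product rule. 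Throughout, the absolute convergence already secured in the proof of \eqref{A2} (via the bound $L(t)\le 4\pi M^2 t^{-1}\|f\|_{\mathcal B}$ and the estimate \eqref{GSF}) licenses all the Fubini interchanges.

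Finally, for the extension from $\mathcal{B}_0$ to $\mathcal{B}$, I would write $f=f(\infty)+f_0$ and $g=g(\infty)+g_0$ with $f_0,g_0\in\mathcal{B}_0$, expand $(fg)(A)$ and $f(A)g(A)$ using linearity of $\Phi_A$ (established in \eqref{b_est}) and $\Phi_A(1)=I$, and match terms: the purely $\mathcal{B}_0$ cross-term is handled by the case already done, while the remaining terms involving the constants reduce to the already-known linearity, noting that $(fg)(\infty)=f(\infty)g(\infty)$. This bookkeeping is routine once the $\mathcal{B}_0$ case is in hand.
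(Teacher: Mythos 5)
Your proposal has the same skeleton as the paper's proof: start from the identity \eqref{A2}, establish the commutation identity $[K(\cdot,\overline{\mu})f](A)=K(A,\overline{\mu})f(A)$ for $f\in\mathcal{B}_0$ (this is \eqref{RA0} in the paper), feed it back into \eqref{A2} and recognize the resulting integral as \eqref{OperA} applied to $g$ at the vector $f(A)x$ (using $g(\infty)=0$), and finally pass from $\mathcal{B}_0$ to $\mathcal{B}$ by splitting off the values at infinity. The concluding step, the extension step, and the Fubini justifications are all fine as you describe them.

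The gap is exactly at the step you flag as the main obstacle, and your proposed resolution of it does not work as stated. You assert that $[K(\cdot,\overline{\mu})f](A)=K(A,\overline{\mu})f(A)$ ``should be provable directly'' because $K(\cdot,\overline{\mu})$ is built from resolvents and $\Phi_A((\lambda+\cdot)^{-1})=(\lambda+A)^{-1}$. But the resolvent identity only yields multiplicativity on \emph{rational}, resolvent-type functions; here the other factor $f$ is an arbitrary element of $\mathcal{B}_0$, and $f(A)$ exists only through the weak integral \eqref{OperA}. A direct verification would require comparing $\int_{\C_+}\langle K(A,\overline{\lambda})x,x^*\rangle D_\lambda\bigl[K(\lambda,\overline{\mu})f(\lambda)\bigr]\,dS(\lambda)$ with $\int_{\C_+}\langle K(A,\overline{\mu})K(A,\overline{\lambda})x,x^*\rangle D_\lambda f(\lambda)\,dS(\lambda)$; expanding $K(A,\overline{\mu})K(A,\overline{\lambda})$ by partial fractions produces integrals of $D_\lambda f$ against kernels such as $(\overline{\lambda}-\overline{\mu})^{-2}$ and $(\overline{\lambda}-\overline{\mu})^{-3}$, which are of the form $K(z,\overline{\lambda})$ with $z=-\mu\notin\overline{\C}_+$, so the reproducing formula \eqref{A0} does not apply to them; they do not vanish and must be evaluated by a separate contour argument (they contribute terms involving $f(\mu)$ and $f'(\mu)$). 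So this step is genuine work, not something ``built into the definition,'' and your plan leaves it unproven. The paper circumvents all of this by a two-stage bootstrap that your proposal misses: since $\Phi_A$ extends the HP-calculus (established just before the proposition) and both $K(\cdot,\overline{\mu})$ and any $f\in\mathcal{LM}(\C_+)$ are Laplace transforms of measures, the HP product rule already gives the commutation identity when $f\in\mathcal{LM}(\C_+)$; combined with \eqref{A2} and \eqref{OperA}, this proves \eqref{MainA} for pairs in $\mathcal{LM}(\C_+)\times\mathcal{B}_0$; specializing this \emph{partial} product rule to $g=K(\cdot,\overline{\mu})\in\mathcal{LM}(\C_+)$ then yields \eqref{RA0} for all $f\in\mathcal{B}_0$, with no circularity. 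Once \eqref{RA0} is in hand, the rest of your argument goes through and coincides with the paper's.
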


\begin{proof}

Assume that $f, g \in \mathcal B_0.$ The proof of \eqref{MainA} will then be done in three steps.

{\bf a)} Let first $f\in \mathcal{LM}(\C_+)$ and $g\in \mathcal{B}_0$.
Since $\Phi_A$ extends the HP-calculus, we use the product rule for the HP-calculus to
obtain that
\[
\langle [K(\cdot,\overline{\mu})f](A)x,x^*\rangle=\langle K(A,\overline{\mu})f(A)x,x^*\rangle
\]
for all $\mu \in \mathbb C_+,$  $x \in X$ and $x^* \in X.$
Hence, by (\ref{A2}),
\begin{align*}
\langle(f g)(A)x,x^*\rangle
=&\int_{\C_{+}}(D_{\mu} g)(\mu)
\langle K(A,\overline{\mu})f(A)x,x^*\rangle\,dS(\mu)\\
=&\langle f(A)g(A)x,x^*\rangle, \qquad x \in X, \,\, x^* \in X,
\end{align*}
and (\ref{MainA}) follows.
Moreover, by symmetry,  (\ref{MainA})
holds if $g\in \mathcal{LM}(\C_+)$ and $f\in \mathcal{B}_0$.

{\bf b)} Let $\mu \in \C_+$ be fixed. By {\bf a)}, applying (\ref{MainA})  to
$f\in\mathcal{B}_0$ and 
$g=K(\cdot,\overline{\mu})\in \mathcal{LM}(\C_+),$
 we conclude that
\begin{equation}\label{RA0}
\langle [K(\cdot,\overline{\mu})f](A)x, x^*\rangle=\langle K(A,\overline{\mu})f(A)x,x^*\rangle
\end{equation}
for all $x \in X$ and $x^* \in X$.

{\bf c)} Let now $f,g\in \mathcal{B}_0$.
Then, using  (\ref{A2})  and (\ref{RA0}) and arguing as in {\bf a)}, 
we obtain (\ref{MainA}).

Finally, to deduce \eqref{MainA} for $f,g \in \mathcal B,$ it suffices to apply \eqref{MainA} to  
$f-f(\infty)$ and $g-g(\infty)$ from $\mathcal B_0.$ 
\end{proof}
Thus we arrive at a Hilbert space part of the result established in \cite[Theorem 4.4]{BaGoTo}.
\begin{cor}
If $-A$ is the generator of a bounded $C_0$-semigroup on a Hilbert space $X,$
% then the mapping 
%$\Phi_A: \mathcal B \to L(X),$ 
%$X$
then the mapping $\Phi _{A}: \mathcal B \to L(X)$,
%$ \Phi _{A}(f):=f(A)$,
given by \eqref{phia},
%$ \Phi_A(f):=f(A),$ given by \eqref{OperA}, is 
a functional calculus for $A,$ extending the HP-calculus. Moreover,
\eqref{b_est} holds.
\end{cor}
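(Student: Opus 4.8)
The plan is to verify directly that the map $\Phi_A$ of \eqref{phia} satisfies every clause in the definition of a $\mathcal B$-calculus recalled above: that $A$ is densely defined with $\sigma(A)\subset\overline{\C}_+$, and that $\Phi_A$ is a bounded algebra homomorphism with $\Phi_A((\lambda+\cdot)^{-1})=(\lambda+A)^{-1}$ for $\lambda\in\C_+$. All the ingredients have already been assembled in the preceding discussion, so the proof amounts to collecting them and matching them against the definition, together with the norm bound \eqref{b_est}.

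First I would dispose of the two requirements on $A$, which are standard semigroup theory and independent of the $\mathcal B$-calculus. Since $-A$ generates a $C_0$-semigroup, $A$ is densely defined and closed. Boundedness of $(e^{-tA})_{t\ge 0}$ forces the growth bound of $-A$ to be at most $0$, so $\C_+\subset\rho(-A)$ and hence $\sigma(A)\subset\overline{\C}_+$; explicitly, for $\lambda\in\C_+$ one has $(\lambda+A)^{-1}=\int_0^\infty e^{-\lambda t}e^{-tA}\,dt$. Writing $(\lambda+\cdot)^{-1}=\widehat\nu$ with $d\nu(t)=e^{-\lambda t}\,dt$ (a finite measure as $\lambda\in\C_+$), the resolvent above equals $\widehat\nu(A)$, so the normalisation $\Phi_A((\lambda+\cdot)^{-1})=(\lambda+A)^{-1}$ is an instance of the already-noted compatibility of $\Phi_A$ with the HP-calculus.

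It then remains to check that $\Phi_A\colon\mathcal B\to L(X)$ is a bounded unital algebra homomorphism. That $f(A)$ is a well-defined bounded operator, together with the explicit norm bound \eqref{b_est}, was established from \eqref{GSF} and the absolute convergence of the integral in \eqref{OperA}; linearity of $f\mapsto f(A)$ is immediate from \eqref{OperA}, since both $f(\infty)$ and $D_\lambda f(\lambda)$ depend linearly on $f$. The one substantive property, multiplicativity $f(A)g(A)=(fg)(A)$, is precisely Proposition~\ref{LMA}. Unitality follows by evaluating \eqref{OperA} on the constant function $1$, whose $\mathcal B_0$-seminorm vanishes, giving $\Phi_A(1)=I$. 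Finally, that $\Phi_A$ extends the HP-calculus and satisfies $\|\widehat\nu(A)\|\le\pi M^2\|\widehat\nu\|_{\mathcal B}$ was recorded when \eqref{OperA} was shown to reproduce $\int_0^\infty e^{-tA}x\,d\nu(t)$ on $\mathcal{LM}(\C_+)$.

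I do not expect any genuine obstacle here, because the hard analytic work is already behind us: the homomorphism property of Proposition~\ref{LMA}, resting on the separation-of-variables identity \eqref{A2} and a delicate Fubini argument controlled by \eqref{GSF}, is the real engine. Relative to that, the corollary is a bookkeeping statement whose proof is simply the observation that each condition in the definition of a $\mathcal B$-calculus is met by the facts collected in \eqref{OperA}, \eqref{b_est}, \eqref{phia} and Proposition~\ref{LMA}.
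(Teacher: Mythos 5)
Your proposal is correct and follows essentially the same route as the paper, which states this corollary without a separate proof precisely because it is the bookkeeping you describe: well-definedness and the bound \eqref{b_est} come from \eqref{GSF} and \eqref{OperA}, compatibility with the HP-calculus (hence the resolvent identity) was recorded right after \eqref{phia}, and multiplicativity is Proposition~\ref{LMA}. Your additional explicit checks (density of ${\rm dom}(A)$, $\sigma(A)\subset\overline{\C}_+$, and $\Phi_A(1)=I$ read off from \eqref{OperA}) are accurate and consistent with the paper's framework.
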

Since $\Phi_A$ coincides with the $\mathcal B$-calculus for $A$ constructed in \cite{BaGoTo},
we will refer to $\Phi_A$ as the $\mathcal B$-calculus in the sequel.
It is instructive to note that
if a linear operator  $A$ on $X$ admits a $\mathcal B$-calculus $\Phi,$
then $\Phi$ is unique by \cite[Theorem 6.2]{BaGoTo1}.
\begin{remark}
The construction of the $\mathcal B$-calculus given above can be adjusted to Banach spaces
for a wider class of $A$ satisfying the so-called GSF resolvent condition.
This goes however beyond the scope of the present paper.
Yet another simple and more powerful approach to construction of the $\mathcal B$-calculus
covering the case of arbitrary number of commuting Hilbert space semigroup generators
is developed in \cite{BaGoToB}.
\end{remark}
\subsection{Applications of the $\mathcal B$-calculus to rational approximations}

Now we turn to the proofs of the main results of this paper.
The arguments are straightforward and based on function-theoretical estimates obtained in Section \ref{fte}
and the $\mathcal B$-calculus.
We also rely on the fact that the $\mathcal B$-calculus (strictly) extends the HP-calculus.
While the HP-calculus will be used to keep a standard meaning for operator functions,
the $\mathcal B$-calculus will produce fine operator-norm estimates.
We employ the standard functional calculi theory, see e.g \cite[Chapter 1]{Haase},
and also \cite[Section 2.3]{EgertR} for a discussion close to our context.
 
The next lemma will be instrumental in the proof of Theorem \ref{mainBT}.
It shows, in particular, 
that estimating the $\mathcal B$-norms of $\Delta_{r, n, s}$ we, 
in fact, estimate the size of functions in $\mathcal{LM}$
in a finer way provided by the $\mathcal B$-norm. 
%(As we mentioned in the introduction, for integer $s\ge 0$ the statement was noted in \cite[p. 685]{BT}).
\begin{lemma}\label{deltalm}
Let $r$ be an $\mathcal A$-stable rational approximation of order $q$ to the exponential.
Then  for all $s \in [0, q+1]$ and $n \in \mathbb N$ one has
$\Delta_{r, n, s} \in \mathcal{LM}(\C_+).$ 
\end{lemma}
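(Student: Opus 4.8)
I want to show that $\Delta_{r,n,s} = z^{-s}(e^{-z}-r^n(z/n))$ lies in $\mathcal{LM}(\C_+)$, i.e.\ that it is the Laplace transform of a bounded Borel measure on $\R_+$. The excerpt already records (in the paragraph following Theorem~\ref{brento}) the fact from \cite[p.~685]{BT} that for an $\mathcal A$-stable rational approximation of order $q$, the function $\Delta_{n,r,s}$ belongs to $\mathcal{LM}(\C_+)$ for \emph{integer} $s$ with $0\le s\le q+1$; so the only genuinely new content here is the extension to \emph{real} $s\in[0,q+1]$. My first move is therefore to isolate exactly what needs proving and to reduce to a building-block statement about each simple factor.

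First I would establish the claim in two easy extreme cases. For $s=0$ we have $\Delta_{r,n,0}=e^{-z}-r^n(z/n)$; here $e^{-z}=\widehat{\delta_1}\in\mathcal{LM}(\C_+)$ and $r(z/n)^n\in\mathcal{LM}(\C_+)$ since any $\mathcal A$-stable rational function is a Laplace--Stieltjes transform (this is stated in the excerpt: ``if $r$ is an $\mathcal A$-stable rational function, then $r_n\in\mathcal{LM}(\C_+)$''), and $\mathcal{LM}(\C_+)$ is an algebra, so $\Delta_{r,n,0}\in\mathcal{LM}(\C_+)$. For $s=q+1$ the approximation property \eqref{approxim} gives $e^{-z}-r^n(z/n)=\mathrm{O}(|z|^{q+1})$ near $0$, which is precisely what makes $z^{-(q+1)}\Delta_{r,n,0}$ holomorphic at the origin; I would argue that this function is bounded and has sufficiently decaying derivative to lie in $\mathcal{LM}(\C_+)$ (e.g.\ via $\mathcal B\subset\mathrm{H}^\infty$ together with the integrability of $\sup_y|f'|$, or directly via a kernel representation). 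The real-exponent interpolation between these is the crux.

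For general real $s\in(0,q+1)$ my plan is the factorisation trick: write $s=m+\theta$ with $m\in\{0,\dots,q\}$ an integer and $\theta\in[0,1)$, and split
\[
\Delta_{r,n,s}(z)=\bigl(z^{-m}\Delta_{r,n,0}(z)\bigr)\cdot z^{-\theta}.
\]
The first factor $z^{-m}\Delta_{r,n,0}=\Delta_{r,n,m}$ is in $\mathcal{LM}(\C_+)$ by the integer case already cited. The second factor $z^{-\theta}$ is \emph{not} bounded, so I cannot simply invoke the algebra property; instead I would use that $z^{-\theta}$ is a classical completely monotone symbol, namely the Laplace transform of the (locally integrable, but not finite-measure) density $t^{\theta-1}/\Gamma(\theta)$. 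The correct mechanism is therefore to pair the \emph{vanishing} of $\Delta_{r,n,m}$ at $0$ and its decay at $\infty$ against the singularity of $z^{-\theta}$, realising $\Delta_{r,n,s}$ as a Laplace-type convolution whose resulting measure is finite. Concretely, if $\Delta_{r,n,m}=\widehat{\nu}$ with $\nu\in\mathrm{M}(\R_+)$, then $z^{-\theta}\widehat{\nu}$ is the Laplace transform of the fractional integral $I^{\theta}\nu$, and I must verify $I^{\theta}\nu\in\mathrm{M}(\R_+)$, i.e.\ $\int_0^\infty |(I^\theta\nu)(t)|\,dt<\infty$.

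\textbf{The main obstacle} is exactly this finiteness of the fractional-integral measure. Since $\Delta_{r,n,0}$ vanishes to order $q+1$ at $z=0$ but $z^{-m}$ only removes $m\le q$ of these, the product $\Delta_{r,n,m}$ still vanishes at the origin to positive order, which controls the behaviour of $\nu$ at $t=\infty$; meanwhile the decay of $\Delta_{r,n,m}(z)$ as $\mathrm{Re}\,z\to\infty$ (it tends to $0$, being in $\mathcal B_0$) governs integrability near $t=0$. I would make this rigorous by the standard criterion that $\theta$-fractional integration maps $\mathcal{LM}$ into $\mathcal{LM}$ provided the symbol vanishes at $0$ to order $>\theta$ and at $\infty$ sufficiently fast; here both conditions hold because $m\le q$ leaves spare vanishing order $q+1-m>\theta$ at the origin. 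If a clean off-the-shelf lemma of this type is unavailable, the fallback is an explicit estimate: writing $(I^\theta\nu)(t)=\frac1{\Gamma(\theta)}\int_0^t (t-u)^{\theta-1}\,d\nu(u)$ (or its measure-theoretic analogue) and bounding $\int_0^\infty|I^\theta\nu|\,dt$ by splitting at $t=1$, using the spare vanishing order to control the large-$t$ tail and the $\mathcal B_0$-decay to control small $t$. This decay-and-vanishing bookkeeping is the one place where care is genuinely needed; everything else is algebra in $\mathcal{LM}(\C_+)$ and the already-cited integer case.
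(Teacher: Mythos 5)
Your route is not the paper's: it is precisely the alternative sketched in the paper's remark after the proof, namely the integer case $\Delta_{r,n,m}\in\mathcal{LM}(\C_+)$, $0\le m\le q+1$, quoted from \cite[p.~685]{BT}, followed by an interpolation-in-$s$ step, for which the paper cites \cite[Corollary 4.3]{GT} and which you propose to realise by hand as fractional integration $I^{\theta}$ applied to the measure of $\Delta_{r,n,m}$. The paper's actual proof avoids both ingredients: it writes $z^{-s}=\eta_s(z)+\nu_s(z)$ with $\eta_s(z)=(1+z)^{-s}$ and $\nu_s(z)=z^{-s}-(1+z)^{-s}$, disposes of $\Delta_{r,n}\eta_s$ by the algebra property of $\mathcal{LM}(\C_+)$ (valid for all real $s$, since $\eta_s\in\mathcal{LM}(\C_+)$), and for $\Delta_{r,n}\nu_s$ combines the mean value bounds $|\nu_s(z)|\le s|z|^{-s-1}$, $|\nu_s'(z)|\le s(s+1)|z|^{-s-2}$ with $\Delta_{r,n},\Delta_{r,n}'\in{\rm H}^\infty(\C_+)$ and with Lemma~\ref{rates},(i) near the origin, to conclude that $(\Delta_{r,n}\nu_s)'$ lies in the Hardy space ${\rm H}^1(\C_+)$; membership of $\Delta_{r,n}\nu_s$ in $\mathcal{LM}(\C_+)$ then follows from \cite[Theorem 4.12]{BaGoTo2}. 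That argument is self-contained and never needs to look at the measures at all.

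As written, your proof has a genuine gap exactly at its declared crux. The ``standard criterion'' you invoke --- that $I^{\theta}$ preserves $\mathcal{LM}$ whenever the symbol vanishes at $0$ to order $>\theta$ and decays at $\infty$ --- is not a statement you can lean on: decay of $\widehat\nu$ as ${\rm Re}\,z\to\infty$ reflects the behaviour of $\nu$ near $t=0$, where integrability of $I^{\theta}\nu$ is automatic for \emph{any} finite measure (indeed $\int_0^T|(I^{\theta}\nu)(t)|\,dt\le T^{\theta}|\nu|([0,T])/\Gamma(\theta+1)$), so your ``$\mathcal B_0$-decay controls small $t$'' plays no role; the real difficulty is the tail $\int_1^\infty|(I^{\theta}\nu)(t)|\,dt$, and there the vanishing order of the symbol at $z=0$ only encodes cancellation of \emph{signed} moments, which by itself does not bound the tails of $|\nu|$. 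To run your own fallback estimate one needs quantitative absolute-moment control, e.g. $\nu(\R_+)=0$ together with $\int_0^\infty u\,d|\nu|(u)<\infty$, and this must be extracted from the explicit structure of the measure: the poles of $r$ lie in the open left half-plane, so the measure of $\Delta_{r,n}$ is a point mass at $1$, a point mass at $0$, and an exponentially decaying density, and the vanishing of its moments up to order $q$ makes its $m$-fold primitive decay exponentially at infinity. None of this bookkeeping appears in your sketch, and it is the actual content of the step --- which is why the paper describes the interpolation result of \cite{GT} as ``involved'' and gives a different proof. A smaller but real slip: your suggested treatment of $s=q+1$ via boundedness plus integrability of $\sup_y|f'|$ cannot work, because that only gives membership in $\mathcal B$, and $\mathcal{LM}(\C_+)\subsetneq\mathcal B$; fortunately $s=q+1$ is an integer, so it is already covered by the \cite{BT} citation you use elsewhere.
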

\begin{proof}
Let $n \in \mathbb N$ be fixed. Using a partial fraction expansion of $r,$   we infer that
$\Delta _{r, n} \in \mathcal {LM}(\mathbb{C}_{+}),$ and, since $\Delta_{r, n, 0}=\Delta_{r, n}$ on 
$\mathbb C_+,$ the statement holds for $s=0.$ 
Fix $s\in (0, q+1]$ and write
\begin{equation*}
\Delta _{r,n,s}(z)=\Delta _{r, n}(z)\eta _{s}(z) + (\eta _{s}(z-1)-
\eta _{s}(z))\Delta _{r, n}(z)
\end{equation*}
for all $z \in \mathbb{C}_{+}$. Recalling that
$\eta _{s}(z)=(1+z)^{-s}, z \in \mathbb{C}_{+}$, note that
$\eta _{s} \in \mathcal{LM}(\mathbb{C}_{+})$ (see e.g.
\cite[Lemma 3.3.4]{Haase}).
Hence,
$\Delta _{r, n}\eta _{s} \in \mathcal {LM}(\mathbb C_{+})$. Letting
\begin{equation*}
\nu _{s}(z):=\eta _{s}(z-1)-\eta _{s}(z), \qquad z \in \mathbb C_{+},
\end{equation*}
we prove next that
$\Delta _{r, n}\nu _{s} \in \mathcal{LM}(\mathbb{C}_{+})$, and thus
$\Delta _{r, n, s} \in \mathcal{LM}(\mathbb{C}_{+})$, as required. To this
end, by the mean value inequality for $\eta _{s}$, observe that
%
%e3.10 #&#
\begin{equation*}
|\nu _{s}(z)|\le \frac{s}{|z|^{s+1}} \qquad \text{and} \qquad |(\nu _{s}(z))'|
\le \frac{s(s+1)}{|z|^{s+2}}
%\label{eq3.10}
\end{equation*}
for all $z \in \mathbb C_{+}$. Moreover, since $r$ is $\mathcal A$-stable,
decomposing $r$ into partial fractions we infer that
$\Delta _{r,n}' \in {\mathrm{H}}^{\infty}(\mathbb C_{+}).$
Hence,
\begin{equation}
|\left(\Delta_{r, n}\nu_s \right)'(z)|={\rm O}(|z|^{-(s+1)}) \qquad \text{as}\,\, |z|\to \infty, \, \, z \in \mathbb C_+.
\label{z0}
\end{equation}
Since $r$ approximates $e^{-z}$ with order $q,$ by Lemma~\ref{rates},(i), 
\begin{equation}
|\left(\Delta _{r, n}\nu_s \right)'(z)|={\rm O}\left(1 +(q+1-s)|z|^{q-s}\right) \qquad \text{as}\,\, |z| \to 0, \, \, z \in \mathbb C_+.
\label{z1}
\end{equation}
Thus, combining \eqref{z0} and \eqref{z1}, 
we conclude that
\begin{equation*}
\sup _{x>0} \|\left (\Delta _{r, n}\nu_s \right )'(x+i\cdot )\|_{L^{1}(
\mathbb R)}<\infty ,
\end{equation*}
i.e. $(\Delta _{r, n}\nu _{s} )'$ belongs to the Hardy space
${\mathrm{H}}^{1}(\mathbb C_{+})$. In view of \cite[Theorem 4.12]{BaGoTo2},
this implies that
$\Delta _{r, n} \nu _{s} \in \mathcal{LM}(\mathbb{C}_{+})$, and then
$\Delta _{r, n, s} \in \mathcal{LM}(\mathbb{C}_{+})$.
\end{proof}
\begin{remark}
There is an alternative approach to the proof of Lemma~\ref{deltalm}.
If $r$ an $\mathcal A$-stable rational approximation of order $q$ to the exponential, then as
above, by a partial fraction expansion, 
$\Delta _{r, n}\in \mathcal{LM}(\mathbb{C}_{+}).$
Moreover, 
$\Delta_{r, n, q+1} \in \mathcal{LM}(\mathbb{C}_{+})$ 
by, for instance, \cite[p. 685]{BT}. 
Then Lemma~\ref{deltalm} follows from an interpolation result in \cite[Corollary 4.3]{GT}. However, the
result from \cite{GT} is involved, and we preferred to give a direct, function-theoretical
argument.
\end{remark}

Now we are ready to prove Theorem \ref{mainBT}.
\smallskip

\begin{proof}[Proof of Theorem \ref{mainBT}]
Fix $n \in \N.$ 
Using a partial fraction expansion of $r,$ define $r^n(A/n)$ by the HP-calculus.
Since $-A$ generates a bounded $C_0$-semigroup on a Hilbert space, $A$ admits the $\mathcal B$-calculus.
So Theorem \ref{mainBT},(i) follows directly from Corollary \ref{MAC}, the invariance property \eqref{scal}, 
and the norm-bound \eqref{b_est} for the $\mathcal{B}$-calculus.
(Instead of invoking \eqref{scal} one may observe that $-tA/n$ generates a semigroup with the uniform norm bound $M$
for all $t \ge 0$.) 

To prove Theorem \ref{mainBT},(ii), fix $s \in (0, q+1]$ and $n \in \mathbb N.$
We define $\Delta_{r, n}(A)$ by the HP-calculus.  
If $h_s(z):= z^s, z \in \C_+,$ then in view of $\eta_{s} \in \mathcal{LM}(\C_+),$
we have $h_s \eta_{s+1}\in \mathcal {LM}(\C_+)$, see e.g. \cite[Lemma 3.3.1]{Haase} (or \cite[Lemma 4.1]{GT}). 
Thus $h_s$ belongs to the extended HP-calculus with the regulariser $\eta_{s+1}$.
By Lemma \ref{deltalm}, $\Delta_{r, n, s}$ belongs to the HP-calculus, 
and thus $\Delta_{r, n, s}(A)$ and $A^s$ are defined by the extended HP-calculus. 

Using the product rule for the (extended) $\mathcal B$-calculus
(see e.g. \cite[Proposition 1.2.2]{Haase}), we have
\begin{equation}\label{tone1}
\Delta_{r, n}(A)x=\Delta_{r, n, s}(A) A^sx, 
\qquad x \in {\rm dom}(A^s).
\end{equation}
In view of $\Delta_{r, n, s}(\infty)=0,$
Lemma \ref{T1P}
and \eqref{b_est} yield
\begin{equation}\label{tone}
\|\Delta_{r, n}(A)x\|\le \frac{C \pi M^2}{n^{qs/(q+1)}} \|A^s x\|
\end{equation}
for all $x \in {\rm dom}(A^s).$
Replacing $A$ by $tA$ in \eqref{tone},
we obtain \eqref{AGT}. 
\end{proof}

\begin{remark}
Arguing as in the proof of Theorem \ref{mainBT},(ii), and using \eqref{FolP2},
we conclude  that, under the assumptions of Theorem \ref{mainBT},
if  $s>0$ then there exists $C=C(s,r)>0$ such that 
\begin{equation}\label{A220A1}
\|r^n(tA/n)x\|\le
C \pi M^2\|(1+tA)^s x\|
\end{equation}
for all $x\in {\rm dom}\, (A^s)$ and $n \in \mathbb N.$
Thus the stability properties of $(r(tA/n))_{n \ge 1}$ improve for fixed $t>0$.
However, one can show easily that a version of \eqref{A220A1} with $tA$ replaced by $A$  in the left hand side of
\eqref{A220A1} does not, in general, hold.
\end{remark}

The proof of Theorem \ref{mainBTin} is similar to the proof Theorem \ref{mainBT},(i), although it relies
on stronger assumptions on $r.$

\medskip

\begin{proof}[Proof of Theorem \ref{mainBTin}] 
 Since $A$ admits the $\mathcal B$-calculus, the estimate \eqref{Ainf}
is a direct implication  of
\eqref{R2},  \eqref{scal} and  \eqref{b_est}.
As noted in the introduction, if \eqref{Ainf} holds, then \eqref{conve} follows from general theory.
Alternatively, using Theorem \ref{mainBT} (or Theorem \ref{brento}), we note that
$$
\lim_{n \to \infty}\Delta_{r, n}(tA)x=0, \qquad x \in {\rm dom} (A), \quad t \ge 0,
$$
and \eqref{conve} is then an immediate corollary of \eqref{Ainf}.  
\end{proof}

It is crucial to emphasize that
the estimates for approximation rates in Theorem \ref{mainBT},(ii) are sharp
in a sense clarified in Theorem \ref{CorMa} below.
To this end, recall that if $f \in \mathcal B$ (in particular, if $f \in \mathcal{LM}(\C_+)$)
and $-A$ generates a bounded $C_0$-semigroup on $X,$ then the next spectral inclusion theorem holds:
\begin{equation}\label{spmt}
\{f(\lambda): \lambda \in \sigma(A) \} \subset \sigma(f(A)).
\end{equation}
See e.g. \cite[Theorem 4.17, (3)]{BaGoTo} for \eqref{spmt} and more general statements.

\begin{thm}\label{CorMa}
Let $r$ be an $\mathcal A$-stable rational
approximation of order $q$ to the exponential, with $r^{(q+1)}(0)\neq (-1)^{q+1}.$
If $-A$ generates a bounded $C_0$-semigroup on a Hilbert space $X,$ and
$
i\mathbb R_+ \subset \sigma(A),
$
then there exist $C=C(r)>0$ and $n_0=n_0(r) \in \N$ such that
for every $s \in (0, q+1],$ 
\[
\|\Delta_{r, n, s}(tA) \|
\ge \frac{C t^s}{n^{sq/(q+1)}},\qquad n\ge n_0, \quad t \ge 0.
\]
\end{thm}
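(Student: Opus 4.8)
The plan is to reduce the claimed operator lower bound to a scalar estimate on the imaginary axis via the spectral inclusion \eqref{spmt}, and then to read off this scalar estimate from the multiplicative representation of Lemma~\ref{rates},(ii). Throughout, $\Delta_{r,n,s}(tA)$ denotes the operator $\Delta_{r,n}(tA)A^{-s}$ satisfying $e^{-tA}-r^{n}(tA/n)=\Delta_{r,n,s}(tA)A^{s}$, as in the proof of Theorem~\ref{mainBT},(ii); equivalently it is $h_{t}(A)$ with $h_{t}(z):=z^{-s}\Delta_{r,n}(tz)$. By Lemma~\ref{deltalm} we have $\Delta_{r,n,s}\in\mathcal{LM}(\C_+)\subset\mathcal B$, and by \eqref{scal} also $\Delta_{r,n,s}(t\,\cdot)\in\mathcal B$, so $h_{t}=t^{s}\Delta_{r,n,s}(t\,\cdot)\in\mathcal B$ and \eqref{spmt} applies to $h_{t}$. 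Since the operator norm dominates the spectral radius and $i\R_{+}\subset\sigma(A)$, the substitution $w=t\xi$ gives, for $t>0$,
\begin{equation*}
\|\Delta_{r,n,s}(tA)\|\ \ge\ \sup_{\xi\ge 0}|h_{t}(i\xi)|\ =\ t^{s}\,\sup_{w\ge 0}\frac{|\Delta_{r,n}(iw)|}{w^{s}}.
\end{equation*}
Thus it suffices to prove $\sup_{w\ge0}w^{-s}|\Delta_{r,n}(iw)|\ge C(r)\,n^{-sq/(q+1)}$ with $C(r)$ and the threshold $n_0$ independent of $s\in(0,q+1]$.

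For the scalar estimate I would simply test the supremum at one frequency of order $n^{\delta}$, where $\delta=q/(q+1)$. Put $a=(r^{(q+1)}(0)-(-1)^{q+1})/(q+1)!$, which is nonzero precisely because $r^{(q+1)}(0)\neq(-1)^{q+1}$. By Lemma~\ref{rates},(ii), for $z\in\overline{\mathbb D}^{+}_{Rn^{\delta}}$,
\begin{equation*}
\Delta_{r,n}(z)=z^{s}\Delta_{r,n,s}(z)=\bigl(1-e^{az^{q+1}/n^{q}}e^{u_{n}(z)}\bigr)e^{-z},\qquad |u_{n}(z)|\le c\,\frac{|z|^{q+2}}{n^{q+1}}.
\end{equation*}
Fix $c'\in(0,\min\{1,R\}]$ and set $w_{n}:=c'n^{\delta}$, so that $iw_{n}\in\overline{\mathbb D}^{+}_{Rn^{\delta}}$. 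Because $(q+1)\delta=q$, the exponent collapses to the \emph{$n$-independent} constant $a(iw_{n})^{q+1}/n^{q}=\zeta:=a\,i^{q+1}(c')^{q+1}$, while $|u_{n}(iw_{n})|\le c(c')^{q+2}n^{-1/(q+1)}\to0$. Since $|e^{-iw_{n}}|=1$, this yields $|\Delta_{r,n}(iw_{n})|=|1-e^{\zeta+u_{n}(iw_{n})}|$.

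To conclude, choose $c'$ so small that $0<|\zeta|=|a|(c')^{q+1}<2\pi$; then $\zeta\notin2\pi i\Z$, so $\kappa:=|1-e^{\zeta}|>0$, and both $c'$ and $\kappa$ depend only on $r$. Picking $n_{0}=n_{0}(r)$ large enough that $|u_{n}(iw_{n})|$ is so small as to force $|\Delta_{r,n}(iw_{n})|\ge\kappa/2$ for all $n\ge n_{0}$ — possible since neither $w_{n}$, nor $\zeta$, nor the bound on $u_{n}$ involves $s$ — we obtain, using $c'\le1$,
\begin{equation*}
\sup_{w\ge0}\frac{|\Delta_{r,n}(iw)|}{w^{s}}\ \ge\ \frac{|\Delta_{r,n}(iw_{n})|}{w_{n}^{s}}\ \ge\ \frac{\kappa}{2(c')^{s}n^{\delta s}}\ \ge\ \frac{\kappa}{2}\,\frac{1}{n^{sq/(q+1)}},
\end{equation*}
for all $n\ge n_0$ and all $s\in(0,q+1]$. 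Combined with the spectral bound above this gives the assertion with $C=\kappa/2$ (the case $t=0$ being trivial). The main obstacle is this scalar step: one must verify that a single frequency already saturates the optimal rate and, crucially, that the leading factor $1-e^{\zeta}$ does not degenerate; the hypothesis $a\neq0$ together with the constraint $|\zeta|<2\pi$ is exactly what keeps $\kappa$ bounded away from $0$, uniformly in $s$.
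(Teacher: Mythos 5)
Your proposal is correct and takes essentially the same route as the paper's own proof: spectral inclusion \eqref{spmt} reduces the operator bound to a scalar lower bound along $i\R_+$, and Lemma \ref{rates},(ii) evaluated at the single test point $ic'n^{q/(q+1)}$ (the paper's $z_n=i\gamma n^{\delta}$ with $\gamma<\min(1,R,2\pi/|a|)$) produces the non-degenerate factor $|1-e^{\zeta}|>0$ plus an $\mathrm{O}(n^{-1/(q+1)})$ error from $u_n$. Your explicit bookkeeping of what $\Delta_{r,n,s}(tA)$ means and of the uniformity of $C$ and $n_0$ in $s$ makes precise points the paper leaves implicit, but it is not a different method.
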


\begin{remark}\label{exact}
The assumption $r^{(q+1)}(0)\neq (-1)^{q+1}$ means that $r$ approximates $e^{-z}$ \emph{precisely} with order $q.$
\end{remark}
\begin{proof}
Let $s \in (0, q+1]$ be fixed, and, let $\delta$ and $a$ be defined as in \eqref{deff}.
From $r^{(q+1)}(0)\neq (-1)^{q+1},$ it follows that $a$ is different from zero.
Using Lemma \ref{rates},(ii) 
we infer that there exist  $R\in (0,1),$ $c>0,$ and a sequence of functions 
$(u_n)_{n \ge 1}$ such that $u_n \in {\rm Hol}\, (\mathbb D_{Rn})$ and  
\begin{equation*}
\Delta_{r,n,s}(z)=
\left(1-e^{az^{q+1}/n^q}e^{u_n(z)}\right){e^{-z}}{z^{-s}}, \qquad |u_n(z)|\le c\frac{|z|^{q+2}}{n^{q+1}},
\end{equation*}
for all $z\in \overline{\mathbb D}^+_{Rn^{\delta}}$ and $n \in \N.$

Choose $\gamma \in (0,1)$ satisfying also $\gamma \in (0,\min(R,\frac{2\pi}{|a|}))$
and set
\[
z_n=i \gamma n^\delta, \qquad n \in \mathbb N,
\]
so that $(z_n)_{n\ge 1} \subset \overline{\mathbb D}^+_{Rn^{\delta}}.$
If 
$
 b=a (i\gamma)^{q+1},
$
then
\[
|\Delta_{r,n,s}(z_n)|=
|1-e^{b} e^{u_n(i \gamma n^\delta)}|\gamma^{-s} n^{-\delta s}, \qquad n \in \N,
\]
and, taking into account \eqref{estim},
\begin{equation}\label{lower1}
\begin{aligned}
 n^{\delta s}|\Delta_{r, n, s}(z_n)|
\ge& |1-e^b|-e^b |u_n(i \gamma n^\delta)| e^{|u_n(i \gamma n^\delta)|}\\
=&|1-e^b|+\mbox{O}\left(\frac{1}{n^{1/(q+1)}}\right), \qquad n \to \infty,%\notag
\end{aligned}
\end{equation}
with $|1-e^b|\neq 0.$ 

Since $(z_n t^{-1})_{n \ge 1} \subset \sigma (A),$ using the spectral inclusion in
\eqref{spmt} we have
\begin{equation*}\label{lower}
\|\Delta_{r, n, s}(tA)\|\ge \sup_{z \in \sigma(A)} |\Delta_{r, n, s}(tz)|
\ge t^s|\Delta_{r, n, s}(z_n)|, \qquad n \in \N,
\end{equation*}
and the estimate \eqref{lower1} implies the claim.
\end{proof}
As a simple example satisfying the assumptions of Theorem \ref{CorMa},
one may consider a skew-adjoint operator $A$ with $\sigma(A)\supset i\mathbb R_+.$

We finish the section with indicating formally the argument for the proof of Theorem \ref{mainER}.
 
\smallskip

\begin{proof}[Proof of Theorem \ref{mainER}]
Fix $s>0$ and $n \ge (s-1)/2.$
The proof follows the same lines as the proof of Theorem \ref{mainBT},(ii),
with $\Delta_{r, n}$  and $\Delta_{r, n, s}$ replaced by $\Delta_{r_{[n,n+1]}}$ and $\Delta_{r_{[n, n+1]},s}$,
respectively.
By invoking the HP- and the $\mathcal B$- calculi,
we have the identity
\[
\Delta_{r_{[n,n+1]}}(A)x=\Delta_{r_{[n,n+1]}, s}(A) A^s x, \qquad x \in {\rm dom}(A^s).
\]
The statement is then a direct implication of Lemma \ref{RozB}, the fact that 
$\Delta_{r_{[n,n+1]}, s}(\infty)=0,$
 and \eqref{b_est}.
\end{proof}

\section{Appendix: Rates for rational approximation of exponential function}

Here we prove Lemma \ref{rates} formulated in Section \ref{fte}.
Its proof is simple but rather technical, and it is divided into several steps.

\begin{proof}[Proof of Lemma \ref{rates}.]
We will show that if 
$r$ is a rational approximation of order $q$ to the exponential, i.e. 
\begin{equation}\label{approq}
e^{-z}-r(z)=\mbox{O}(|z|^{q+1}), \qquad z\to 0,
\end{equation}
and $s \in [0,q+1]$ is fixed, then letting $\delta=q/(q+1)$ and 
\[
a=\left(r^{(q+1)}(0)-(-1)^{q+1}\right)((q+1)!)^{-1},
\]
there exist $R=R(r) \in (0,1)$ and 
$c=c(r)>0$ such that
\begin{equation}\label{main111}
|(\Delta_{r, n, s}(z))'|\le
\left(|a|(q+1-s)\frac{|z|^{q-s}}{n^q}
+\frac{c}{n^{\delta s}}\right) e^{-{\rm Re}\,z}
\end{equation}
for all $z \in \overline{\mathbb D}^+_{Rn^{\delta}}.$
This will provide the proof of Lemma \ref{rates},(i),
and on the way will prove Lemma \ref{rates},(ii) as well.
The proof will be done in several steps.

By \eqref{approq}, 
there is $R_0=R_0(r) \in (0,1)$ such that if  $\psi \in {\rm Hol}(\mathbb D_{R_0})$
is given by
\[
\psi(z):=-e^z\Delta_{r,1}(z), \qquad z \in \mathbb D_{R_0},
\] 
then
\begin{equation}\label{D10}
r(z)=e^{-z}(1+\psi(z)) 
\end{equation}
with
\begin{equation}
\label{CGG}
|\psi(z)|\le b|z|^{q+1},
\end{equation}
and (as consequence)
\begin{equation}\label{CGG1}
|\psi'(z)|\le b_1|z|^q,
\end{equation}
for all $z \in \mathbb D_{R_0}$ and some positive $b=b(r)$ and $b_1=b_1(r).$
Observing that for all $z \in \mathbb D_{R_0},$
\[
\psi^{(q+1)}(z)=-e^z\sum_{k=0}^{q+1}\binom {q+1}{k} \Delta^{(k)}_{r,1}(z),
\]
\noindent
and $\Delta^{(k)}_{r,1}(0)=0$ for all $0\le  k \le q,$  we infer that
\[
\psi^{(q+1)}(0)
=-\Delta^{(q+1)}_{r,1}(0)=r^{(q+1)}(0)-(-1)^{q+1},
\]
\noindent
and thus
\[
\psi(z)=\frac{r^{(q+1)}(0)-(-1)^{q+1}}{(q+1)!}z^{q+1}+
\mbox{O}(z^{q+2}),\quad z\to 0.
\]
\noindent
So, we can write
\begin{equation}\label{D1}
 \psi(z)=az^{q+1}+m(z),
\end{equation}
\noindent
 where $m \in {\rm Hol}(\mathbb D_{R_0})$ and
\begin{equation}\label{m}
|m(z)|\le \tilde b|z|^{q+2},\qquad |m'(z)|\le \tilde b_1|z|^{q+1},
\qquad z\in \mathbb D_{R_0},
\end{equation}
for some positive $\tilde b=\tilde b(r)$ and $\tilde b_1=\tilde b_1(r).$ 

To simplify further presentation, given $\alpha \in (0, R]$ and  positive functions $f$ and $g$ on $\mathbb D_{R}$
we use the notation $f(z)\lesssim g(z)$ for all $z \in \mathbb D_\alpha$ if $f(z)\leq C g(z), z \in \mathbb D_\alpha,$ where a constant $C > 0$
depends only on parameters associated to $r$ (and independent of $z$).

Observe that from \eqref{CGG} it follows that there exists $R \in (0, R_0)$ satisfying
\begin{equation}\label{12}
|\psi(z)|\le \frac{1}{2},\qquad z\in \mathbb D_{R}.
\end{equation}
\noindent
Using the elementary inequality
\[
|\log(1+z)-z|\le |z|^2,\qquad |z|\le 1/2,
\]
the bound (\ref{12}) and taking into account \eqref{m} and \eqref{CGG}, we infer that for all 
$z\in \mathbb D_{R},$
\begin{align*}
|\log(1+\psi(z))-a z^{q+1}|
\le& |m(z)|+|\psi(z)|^2\\
\lesssim & |z|^{q+2}+|z|^{2(q+1)}
\lesssim |z|^{q+2}.
\end{align*}
\noindent
Hence, if $z\in \mathbb D_{R}$ then e.g. by power series expansion
 there is $m_0 \in {\rm Hol}(\mathbb D_{R})$
such that
\begin{equation}\label{Alog}
\log(1+\psi(z))=az^{q+1}+m_0(z),\qquad
|m_0(z)|\lesssim |z|^{q+2},
\end{equation}
and, in view of \eqref{D10},
\begin{equation}\label{expAA}
r(z)=e^{-z}e^{az^{q+1}}e^{m_0(z)}.
\end{equation}
\noindent
Thus if $n \in \mathbb N$ is fixed, and
\[
r_n(z)=r^n(z/n), \qquad z \in \mathbb D_{Rn},
\]
then, by (\ref{expAA}), we have
\begin{equation}\label{ABC}
r_n(z)=e^{-z}e^{az^{q+1}/n^q}e^{m_{0,n}(z)},
\end{equation}
where  $m_{0,n} \in {\rm Hol}(\mathbb D_{Rn})$ is defined by
$m_{0,n}(z):=n \cdot m_{0}(z/n), z \in \mathbb D_{Rn},$
and satisfies
\begin{equation}\label{ABCC} 
|m_{0,n}(z)|
\lesssim \frac{|z|^{q+2}}{n^{q+1}}, \qquad z \in \mathbb D_{Rn^\delta}.
\end{equation}
Letting $u_n(z):=m_{0,n}(z), z \in \mathbb D_{Rn^\delta},$ and noting that $\Delta_{r,n, s}$ extends continuously to $[-iRn^\delta, iRn^\delta],$ we obtain Lemma \ref{rates},(ii).

Next, using the estimates
\begin{equation}\label{estim}
|e^z-1|\le |z|e^{|z|},\qquad
|e^z-1-z|\le \frac{|z|^2}{2}e^{|z|}, \qquad z \in \mathbb C,
\end{equation}
\noindent
and \eqref{ABC}, we define $m_{1,n} \in{\rm Hol}(\mathbb D_{Rn})$ by
\begin{equation}\label{phi0}
r_n(z)=(1+m_{1,n}(z))e^{-z},
\end{equation}
\noindent
and note that
\begin{equation}\label{m1n}
|m_{1,n}(z)|\le
(|az^{q+1}/{n^q}|+|m_{0,n}(z)|)e^{|az^{q+1}/{n^q}|+|m_{0,n}(z)|}\lesssim \frac{|z|^{q+1}}{n^q}
\end{equation}
for all $z \in \mathbb D_{Rn^\delta}.$
Similarly, if  $m_{2,n} \in {\rm Hol}(\mathbb D_{Rn})$ is given by 
\begin{equation}\label{phi}
r_n(z)=\left(1+a\frac{z^{q+1}}{n^q}+m_{2,n}(z)\right)e^{-z},
\end{equation}
\noindent
then for every $z \in \mathbb D_{Rn^\delta},$
\begin{align*}
|m_{2,n}(z)|
\le& |m_{0,n}(z)|+\left(|az^{q+1}/n^q|^2+|m_{0,n}(z)|^2\right)
e^{|az^{q+1}/{n^q}|+|m_{0,n}(z)|}\\
\lesssim &  \frac{|z|^{q+2}}{n^{q+1}}+
\frac{|z|^{2(q+1)}}{n^{2q}}+
\frac{|z|^{2(q+2)}}{n^{2(q+1)}}
\lesssim   \frac{|z|^{q+2}}{n^q}.
\end{align*}

Furthermore, recalling that
$
\Delta_{r,n}(z)=e^{-z}-r_n(z),
$
and using \eqref{phi0} and \eqref{phi}, note that
\begin{equation}\label{FG0}
\Delta_{r,n}(z)=-e^{-z}m_{1,n}(z)
\end{equation}
\noindent 
and
\begin{equation}\label{FG}
\Delta_{r,n}(z)=-e^{-z}\left(a\frac{z^{q+1}}{n^q}+m_{2,n}(z)
\right).
\end{equation}
\noindent
for all $z \in \mathbb D_{Rn}.$ Since by \eqref{D10},
\[
\frac{r'(z)}{r(z)}=-1+\frac{\psi'(z)}{1+\psi(z)},
\]
\noindent
we have
\begin{equation}\label{rnn}
\frac{r_n'(z)}{r_n(z)}
=-1+\psi'(z/n)\left(1-\frac{\psi(z/n)}{1+\psi(z/n)}\right), \qquad z \in \mathbb D_{Rn}.
\end{equation}
\noindent
Combining  \eqref{D1}, (\ref{phi0}), (\ref{FG0}) and \eqref{rnn}, we obtain that
\begin{equation}\label{MF}
\begin{aligned}
&(\Delta_{r,n}(z))'\\
=&-\Delta_{r,n}(z)-r_n(z)\psi'(z/n)\left(1-\frac{\psi(z/n)}{1+\psi(z/n)}\right)%\notag 
\\
=& m_{1,n}(z)e^{-z}
-(1+m_{1,n}(z))
\left(a(q+1)\frac{z^q}{n^q}+m'(z/n)\right)\left(1-\frac{\psi(z/n)}{1+\psi(z/n)}\right)e^{-z}%\notag 
\\
=&-a(q+1)\frac{z^q}{n^q}e^{-z}+m_{3,n}(z)e^{-z},%\notag 
\end{aligned}
\end{equation}
\noindent
where $m_{3,n} \in {\rm Hol}(\mathbb D_{Rn})$ is defined by
\begin{align*}
m_{3,n}(z):=& m_{1,n}(z)-m_{1,n}(z)
\left(a(q+1)\frac{z^q}{n^q}+m'(z/n)\right)\frac{1}{1+\psi(z/n)}\\
+&\left(a(q+1)\frac{z^q}{n^q}+m'(z/n)\right)\frac{\psi(z/n)}{1+\psi(z/n)}
-m'(z/n), \qquad z \in \mathbb D_{Rn}.
\end{align*}
\noindent
Taking into account $|z|\lesssim n$, $z\in \mathbb D_{R n^\delta}$,
and employing \eqref{m}, \eqref{CGG}, \eqref{12}, and \eqref{m1n}, we have 
\begin{equation}\label{MF1}
\begin{aligned}
|m_{3,n}(z)|\lesssim& \frac{|z|^{q+1}}{n^q}\left(
1+\frac{|z|^q}{n^q}+\frac{|z|^{q+1}}{n^{q+1}}\right)\\
+&\left(\frac{|z|^q}{n^q}+\frac{|z|^{q+1}}{n^{q+1}}\right)\frac{|z|^{q+1}}{n^{q+1}}+\frac{|z|^{q+1}}{n^{q+1}}%\notag
\\
\lesssim& \frac{|z|^{q+1}}{n^q}, \qquad z \in \mathbb D_{Rn^\delta}.%\notag
\end{aligned}
\end{equation}

Therefore, using  \eqref{FG}, \eqref{MF}, and \eqref{MF1}, we infer 
that for every $z \in \mathbb D^+_{Rn},$
\begin{equation}\label{final}
\begin{aligned}
(\Delta_{r, n, s}(z))'=&\frac{\Delta_{r,n}(z)'}{z^s}-
s\frac{\Delta_{r,n}(z)}{z^{s+1}}\\
=&a(s-(q+1))\frac{z^{q-s}}{n^q}e^{-z}
+\frac{m_{3,n}(z)}{z^s}e^{-z}
+m_{2,n}(z)\frac{s}{z^{s+1}}e^{-z}%\notag
 \\
=&a(s-(q+1))\frac{z^{q-s}}{n^q}e^{-z}+m_{4,n}(z)e^{-z},%\notag
\end{aligned}
\end{equation}
\noindent
with $m_{4,n} \in {\rm Hol}(\mathbb D^+_{Rn})$ satisfying
\begin{equation}\label{m4}
|m_{4,n}(z)|\lesssim \frac{|z|^{q+1-s}}{n^q}, \qquad z \in \mathbb D^+_{Rn^\delta}.
\end{equation}

Recalling the relation $(q+1)\delta=q$, observe that
\begin{equation}\label{fract}
\frac{|z|^{q+1-s}}{n^q}\le R^{q+1-s}\frac{n^{\delta(q+1-s)}}{n^q}=\frac{R^{q+1-s}}{n^{\delta s}}.
\end{equation}
Hence, 
\eqref{final}, \eqref{m4} and \eqref{fract} imply \eqref{main111}.
\end{proof}

\section{Acknowledgment}

We would like to thank the referee for various useful suggestions and remarks
that led to improvement of the paper.

\end{document}